\newcommand{\inv}[1]{#1^{-1}}
\newcommand{\restr}[2]{\ensuremath{#1_{\mkern 1mu \vrule height 1.75ex\mkern2mu {#2}}}}
\def\moverlay{\mathpalette\mov@rlay}
\def\mov@rlay#1#2{\leavevmode\vtop{%
		\baselineskip\z@skip \lineskiplimit-\maxdimen
		\ialign{\hfil$\m@th#1##$\hfil\cr#2\crcr}}}
\newcommand{\charfusion}[3][\mathord]{
	#1{\ifx#1\mathop\vphantom{#2}\fi
		\mathpalette\mov@rlay{#2\cr#3}
	}
	\ifx#1\mathop\expandafter\displaylimits\fi}
\DeclareMathOperator{\Aut}{Aut}
\newcommand {\calC}{\ensuremath{\mathcal C}}
\newcommand {\calF}{\ensuremath{\mathcal F}}
\newcommand {\calG}{\ensuremath{\mathcal G}}
\newcommand {\calU}{\ensuremath{\mathcal U}}
\newcommand {\calV}{\ensuremath{\mathcal V}}
\newcommand {\calW}{\ensuremath{\mathcal W}}
\newcommand {\calY}{\ensuremath{\mathcal Y}}
\newcommand{\NX}[2][]{\ifthenelse{\isempty{#1}}{\ensuremath{N(#2)}}{\ensuremath{N_{#1}(#2)}}}
\newcommand{\EXY}[2][]{\ifthenelse{\isempty{#1}}{\ensuremath{E(#2)}}{\ensuremath{E_{#1}(#2)}}}
\newcommand{\degree}[2][]{\ifthenelse{\isempty{#1}}{\ensuremath{d(#2)}}{\ensuremath{d_{#1}(#2)}}}
\newcommand{\dist}[2][]{\ifthenelse{\isempty{#1}}{\ensuremath{d(#2)}}{\ensuremath{d_{#1}(#2)}}}
\newcommand{\overcirc}[1]{\ensuremath{\mathring{#1}}}
\newcommand{\ext}{\textup{ext}}
\newcommand{\pw}{\text{pw}}
\newcommand{\Gpwk}{\ensuremath{\calG^k}}
\newcommand{\super}[1]{\ensuremath{\text{super}\left(#1\right)}}
\newcommand{\none}{\texttt{NONE}}
\let\oldnl\nl
\newcommand{\nonl}{\renewcommand{\nl}{\let\nl\oldnl}}
\definecolor{stdred}{RGB}{228,26,28}
\definecolor{stdgreen}{RGB}{26,150,65}
\definecolor{stdblue}{RGB}{31,120,180}
\newcolumntype{Y}{>{\centering\arraybackslash}X}
\newcolumntype{Z}{>{\hsize=1.3\hsize\linewidth=\hsize\footnotesize\centering\arraybackslash}X}
\newcolumntype{z}{>{\hsize=0.9\hsize\linewidth=\hsize\footnotesize\centering\arraybackslash}X}
\tikzstyle{svertex}=[circle,inner sep=0.cm, minimum size=1.3mm, fill=black, draw=black]
\newcommand{\drawBag}[5]%
{%
	\node[draw, ellipse, scale=0.8, minimum width=3cm,fill=#5] (#2) at #1 
	{
		\hspace{-15pt}
		\begin{tabular}{c}
			#3 \\ 
			#4
		\end{tabular}
		\hspace{-15pt}
	};
}
\newcommand{\Bag}[6]%
{%
	\node[draw, ellipse, scale=0.8, minimum width=#5cm, #6] (#2) at #1 
	{
		\hspace{-20pt}
		\begin{tabular}{c}
			#3 \\ 
			#4
		\end{tabular}
		\hspace{-20pt}
	};
}
\newtheorem{definition}{Definition}
\newtheorem{observation}[definition]{Observation}
\newtheorem{remark}[definition]{Remark}
\newtheorem{theorem}[definition]{Theorem}
\newtheorem{lemma}[definition]{Lemma}
\title{Automated Testing and Interactive Construction of Unavoidable Sets for Graph Classes of Small Path-width}
\author{Oliver Bachtler and Irene Heinrich}
\date{}
\begin{document}

\maketitle 
	
\begin{abstract}
We present an interactive framework that, given a membership test for a graph class $\calG$ and a number $k \in \mathbb{N}$, finds and tests unavoidable sets for the class \Gpwk{} of graphs in $\calG$ of path-width at most~$k$.
We put special emphasis on the case that $\calG$ is the class of cubic graphs and tailor the algorithm to this case.
In particular, we introduce the new concept of high-degree-first path-decompositions, which yields highly efficient pruning techniques.

Using this framework we determine all extremal girth values of cubic graphs of path-width~$k$ for all $k \in \{3,\dots, 10\}$.
Moreover, we determine all smallest graphs which take on these extremal girth values.
As a further application of our framework we characterise the extremal cubic graphs of path-width~3 and girth~4. 
\end{abstract}
	
\section{Introduction}
\label{sec:intro}
A set of graphs is \emph{unavoidable} for a graph class if every graph in the class contains an isomorphic copy of a graph in the set.
Unavoidable sets are extensively used in
\begin{itemize}
	\item interactive proofs, for example Appel and Haken's proof of the famous 4-colouring conjecture, cf.~\cite{Appel1977Discharging, Appel1977reducibility},
	\item structural graph theory, where unavoidable sets give insights into the behaviour of the considered class and are, hence, of intrinsic interest, see for  example~\cite{chudnovsky2016unavoidable},
	\item recursive algorithms and inductive proofs, for example \cite{BBFG19, Fuchs_Gellert_HeinrichJGT}.
\end{itemize}
While discharging~\cite{cranston2017discharging} is a tool to find unavoidable structures for colouring problems   and Ramsey theory~\cite{conlon2015ramseySurvey} studies unavoidable sets in extremal graph theory,
there is no generic approach for finding or checking unavoidable sets.
Frequently, tedious case distinctions are necessary to prove that some set is indeed unavoidable for a considered class, cf.~\cite{BBFG19, Fuchs_Gellert_HeinrichJGT}.

\medskip
\noindent
\textbf{Our contribution.}
We present a recipe for the automatic construction and testing of unavoidable sets for classes of small path-width.
Let a class $\calG$ with a membership test and a number $k \in \mathbb{N}$ be given.
Our interactive framework finds and tests unavoidable sets for all classes of the form
\[\Gpwk \coloneqq \{G \in \calG \colon~\text{$G$ is of path-width at most $k$}\}.\]
To this end, we describe an algorithm which investigates the hypothesis that $\calU$ is unavoidable for $\Gpwk$.
After each phase~$i$, the algorithm either
\begin{itemize}
	\item returns a counterexample of order $k+i$, or
	\item returns \none, guaranteeing that there is no counterexample to the hypothesis, or
	\item guarantees that there is no counterexample of order $k+i$ and proceeds with phase $i+1$.
\end{itemize}
This framework can easily be adapted to a check for unavoidable induced subgraphs or for unavoidable minors.

We use the algorithm to determine bounds on the girth of cubic graphs of small path-width.
Recall that the girth of a graph is the minimum length amongst its cycles.
Consider the following example, in which the algorithm is used as a black box:
Let \calG{} be the class of cubic graphs and write $\calU_i$ for the set $\{C_3,\ldots,C_i\}$, where $C_{\ell}$ denotes a cycle on~$\ell$ vertices.
To investigate on the maximal girth values of cubic graphs depending on their path-width, we
set 
\begin{align*}
	\xi \colon \mathbb{N}_{\geq 3} &\to \mathbb{N},\\
	k &\mapsto \max\{g \colon \text{there is a simple cubic graph of girth $g$ and path-width $k$} \}.
\end{align*}
We want to precisely determine $\xi(k)$ for small values of $k$.

For the case $k=3$ we first check whether all cubic graphs of path-width~3 have a triangle by running the algorithm on the set~$\calU_3$ and $k=3$.
It returns the $K_{3,3}$ which has girth~4.
Hence, $\calU_3$ is not unavoidable for~$\calG^3$.
We extend the set $\calU_3$ to $\calU_4$.
The algorithm now returns \none, meaning that this set is, indeed, unavoidable and $\xi(3) = 4$.
With this method, we can find the value of $\xi(k)$ for $k=3,\ldots,7$, as shown in \Cref{tab:algorithm-examples}.

\begin{table}[htb]
	\centering
	\begin{tabularx}{\textwidth}{>{\footnotesize\raggedleft\arraybackslash}l||z|z|z|Z|z|Z|z|z}
		$k$ & 3 & 3 & 4 & 5 & 5 & 6 & 6 & 7 \\\hline
		\calU & $\calU_3$ & $\calU_4$ & $\calU_4$ & $\calU_4$ & $\calU_5$ & $\calU_5$ & $\calU_6$ & $\calU_6$ \\\hline
		Result & $K_{3,3}$ & \none & \none & Petersen & \none & Heawood & \none & \none \\
	\end{tabularx}
	\caption{Algorithmic results for cubic graphs of path-width~$k$ and unavoidable structures \calU.
	\label{tab:algorithm-examples}}
\end{table}
This describes how the algorithm is to be used in general:
start with a set of structures, potentially the empty set, and run the algorithm on it.
If the set is not unavoidable, the provided counterexample can be used to extend the set of structures, either by adding the graph itself or a subgraph.
Repeat this process until the set of structures is unavoidable.

Subsequently to obtaining the results in \Cref{tab:algorithm-examples} with the help of a computer, we managed to prove the theorems below by hand, making use of the formalisms introduced to prove the correctness of the algorithm:
\begin{restatable}{thm}{genbound}
	\label{cubic-pw-girth-bounds-general}
	For all $k \in \mathbb{N}_{\geq 3}$ the following inequality is satisfied:
	\[\xi(k) \leq \tfrac{2}{3}k + \tfrac{10}{3}.\]
\end{restatable}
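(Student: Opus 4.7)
The plan is to prove the contrapositive: any cubic graph $G$ of girth $g$ has $\pw(G)\geq \frac{3g-10}{2}$. To this end I would fix a nice path decomposition $(B_1,\dots,B_m)$ of $G$ of minimum width $k$, and pick the vertex $v$ with the smallest last-bag index $b_v$. Since $v$ is the first vertex to be forgotten, no vertex has been forgotten in bags $B_1,\dots,B_{b_v-1}$, so $B_{b_v}$ is precisely the set of vertices introduced so far. In particular, all three neighbours $u_1,u_2,u_3$ of $v$ lie in $B_{b_v}$.

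I would then analyse the evolution of the subsequent bags. Define the \emph{frontier} $F_i\subseteq B_i$ as the set of active vertices that still have an edge to some not-yet-introduced vertex. Immediately after $v$ is forgotten each $u_i$ is in the frontier with its remaining $\leq 2$ neighbours still pending. The girth hypothesis limits the options for introducing new vertices: if a newly introduced vertex $w$ has $d_w\geq 2$ neighbours in the current frontier, then those two frontier neighbours of $w$ must be at distance $\geq g-2$ in the already-processed subgraph; near the beginning of the process this rules out $d_w\geq 2$ entirely, since otherwise one would close a cycle of length less than $g$ through $v$.

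The quantitative heart of the argument is to show that the peak frontier size must reach at least $\frac{3g-10}{2}+1$. The BFS tree around $v$ is a genuine tree up to depth $\lfloor (g-1)/2\rfloor$, with $3\cdot 2^{d-1}$ vertices at each level $d\geq 1$; a level-$d$ vertex cannot be forgotten until both of its level-$(d+1)$ neighbours have been introduced. Careful bookkeeping on how many such vertices must be simultaneously present in the bag then yields the required lower bound on the peak frontier, and hence on $k$.

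The main obstacle will be making this bookkeeping precise, independently of the processing schedule. Different orders (serial versus parallel across BFS branches) give different peak frontier sizes, so one must show that no schedule avoids the linear-in-$g$ growth. Equivalently, the savings obtained by introducing a vertex $w$ with $d_w\geq 2$ — possible, under girth $g$, only once the relevant frontier pair has been pushed sufficiently far apart — are insufficient to beat the claimed bound.
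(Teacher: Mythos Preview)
Your proposal has a genuine gap: the entire quantitative content is deferred to ``careful bookkeeping'' that you never carry out, and the tool you set up does not point toward the bound you need. The BFS tree around $v$ grows exponentially ($3\cdot 2^{d-1}$ vertices at depth $d$), and frontier arguments based on it naturally yield exponential lower bounds on path-width in terms of girth --- this is exactly the Chandran--Subramanian bound cited in the paper, which is weaker than the linear bound $\xi(k)\le \tfrac{2}{3}k+\tfrac{10}{3}$ for all $k\le 25$. Nothing in your outline explains how to extract a \emph{linear} relationship, and you yourself flag the obstacle (different processing schedules yield different peak frontier sizes) without resolving it. In short, the proposal names the right object (the evolving bag/frontier along a path-decomposition) but supplies no mechanism that would force the peak frontier to reach $\tfrac{3g-10}{2}+1$.

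The paper's argument is structurally different and worth contrasting. It works with a \emph{high-degree-first} smooth path-decomposition and tracks, for each associated graph $H_i$, the exact numbers $d_j^i$ of vertices of degree $j$. Two counting lemmas show that while $H_i$ is a forest (resp.\ has a unique cycle) one has $d_3^i=i-1$ and $d_2^i\le 3$. A pigeonhole observation then forces the second cycle to appear by index $l+1\le k+2$, at which point the total number of vertices of degree at least~2 in $H_{l+1}$ is at most $l+3\le k+4$. The punchline is purely combinatorial: two distinct cycles living inside a subgraph with at most $k+4$ vertices of degree $\ge 2$ must either be disjoint (giving $g\le \tfrac{k}{2}+2$) or share a path, in which case splitting one cycle by a chord of the other yields $g\le |C|$ and $g\le l+4-\tfrac{|C|}{2}$ simultaneously, and balancing gives $g\le \tfrac{2}{3}k+\tfrac{10}{3}$. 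The linear bound thus comes from \emph{when the second cycle is forced to appear}, not from BFS-style expansion; your frontier/BFS framework does not capture this phenomenon.
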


Furthermore we prove the upper bounds on the girth in the theorem below, which are an improvement on the previous bound for $k\leq 13$.
The equalities obtained for $k\leq 10$ use the examples listed in the table of Theorem~\ref{thm: unique-smallest-graphs}, whose girth values coincide with the determined upper bounds.
\begin{restatable}{thm}{smallbound}
	\label{cubic-pw-girth-bounds-small-pw}
	The values of $\xi$ for small values of $k$ are shown in the table below:
	\begin{table}[!ht]
		\centering
		\begin{tabularx}{.65\textwidth}{l||Y|Y|Y|Y|Y|Y|Y|Y}
			$k$&3&4&5&6&7&8&9&10\\
			\hline
			$\xi(k)$&4&4&5&6&6&7&8&8
		\end{tabularx}
	\end{table}
	
	\noindent
	Additionally, $\xi(k) \leq k-2$ holds for all $k \geq 10$.
\end{restatable}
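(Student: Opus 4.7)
The plan is to split the statement into two independent tasks: pinning down the exact values of $\xi(k)$ for $k \in \{3,\dots,10\}$, and establishing the general bound $\xi(k) \leq k - 2$ for $k \geq 10$. For the tabulated values I would extend Table~\ref{tab:algorithm-examples} by running the framework on the additional pairs $(k, \calU) \in \{(8, \calU_7), (9, \calU_8), (10, \calU_8)\}$, verifying that the algorithm returns \none\ in each case; each such verification certifies that no cubic graph of path-width $k$ has girth exceeding the corresponding index. The matching lower bounds would be witnessed by the concrete cubic graphs listed in Theorem~\ref{thm: unique-smallest-graphs}, each realising the claimed girth at the claimed path-width; assembling these certificates row by row then yields the table.

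For the general bound I would argue by contradiction: assume $G$ is a cubic graph of path-width $k \geq 10$ and girth at least $k - 1$, and fix a high-degree-first path-decomposition $(B_1,\dots,B_t)$ of width $k$. The key step is to examine the bag at which a fixed vertex $v$ is forgotten, so that all three neighbours of $v$ then lie in the current bag, leaving at least $k - 2$ further vertices sharing the bag with it. I would then chase edges of the active vertices forward and backward along the decomposition, using the high-degree-first property to control how quickly newly introduced vertices accumulate active neighbours, and invoke the $(k + 1)$-vertex bag budget to force two such walks to collide into a cycle of length at most $k - 2$, contradicting the girth assumption.

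The hard part will be the structural bound, not the algorithmic certification. The challenge is to control simultaneously the local branching forced by high girth and the global compactness forced by small path-width, and to obtain the sharp constant $k - 2$ rather than a merely linear bound in $k$. The high-degree-first refinement, introduced earlier in the paper to tame path-decompositions of cubic graphs, should be the right lever to sharpen the pigeonhole step into the claimed tightness, taking over from Theorem~\ref{cubic-pw-girth-bounds-general} precisely in the regime $k \geq 10$ where the $\tfrac{2}{3}k + \tfrac{10}{3}$ bound becomes weaker than $k-2$.
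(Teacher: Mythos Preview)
Your plan for the tabulated values diverges from the paper's: the paper does \emph{not} invoke the algorithm for $k\in\{8,9,10\}$ but proves all the upper bounds by hand in a single chain of inequalities ($g\le k+1$ always, $g\le k$ for $k\ge 4$, $g\le k-1$ for $k\ge 7$, $g\le k-2$ for $k\ge 10$). The last of these then serves both as the $k=10$ entry and as the general bound. Certifying $k=8,9,10$ algorithmically would be logically admissible but turns the theorem into a computer-assisted statement, and for $k=10$ it duplicates work you must do anyway.

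The genuine gap is in your plan for $\xi(k)\le k-2$. Fixing a single forgotten vertex and appealing to a pigeonhole on its bag does not manufacture a short cycle, and ``chasing edges until walks collide'' is not an argument---nothing in that sketch bounds the length of the resulting cycle by $k-2$ rather than, say, $2k$. The paper's proof is structurally quite different: it tracks the sequence of \emph{associated graphs} $H_i$ along an extended hdf decomposition and controls their degree distributions via \Cref{acyclic-degrees} and \Cref{1-cycle-degrees}. The key lemma (\Cref{ass-graphs-initially-acyclic}) shows $H_i$ stays a forest for all $i\le g-2$, because each step adds exactly one degree-$3$ vertex while the path-length bound of \Cref{ass-graphs-path-lengths-acyclic} prevents closing a cycle that early. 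The heart of the proof is then a careful case analysis of the possible degree profiles of $H_{g-1},H_g,H_{g+1},H_{g+2},H_{g+3}$: under the assumptions $g\ge k-1$ and $k\ge 10$, every branch is shown to force a second cycle whose length can be bounded (typically as roughly $\tfrac{|C|}{2}+O(1)$ or $\tfrac{|C|}{3}+O(1)$ using paths between vertices on the first cycle $C$), contradicting $g\ge 9$. The sharp constant arises only through this step-by-step bookkeeping; your outline contains neither the forest lemma nor the cycle-length mechanism, so as written it would not yield the claimed bound.
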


We give a complete list of the minimal graphs of path-width $k$ and girth $\xi(k)$ for all $k \in \{3, \dots, 10\}$.

\begin{restatable}{thm}{uniquegraphs}
	\label{thm: unique-smallest-graphs}
	For $k \in \{3,\ldots,10\}\setminus\{4\}$ there is a unique smallest graph of path-width $k$ and girth $\xi(k)$. There are two smallest graphs of path-width~4 and girth~4. 
	The results are summarised in the following table:
	\begin{center}
		\begin{tabular}{c||c}
			$k$& smallest cubic graphs of path-width $k$ and girth $\xi(k)$ \\
			\hline
			3&$K_{3,3}$\\
			4&cube, twisted cube\\
			5&Petersen graph\\
			6&Heawood graph\\
			7&Pappus graph \\
			8&McGee graph\\
			9&Tutte Coxeter graph\\
			10& $G(10)$,
		\end{tabular}
	\end{center}
	where $G(10)$ denotes the unique cubic graph of path-width~10 and girth~8. We refer to \Cref{fig:tight-bounds} for drawings of all graphs in the above list.
\end{restatable}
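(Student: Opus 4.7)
The plan is to apply the algorithmic framework from the earlier sections in an enumeration mode. For each $k \in \{3,\ldots,10\}$ we instantiate the algorithm with input $\calU = \calU_{\xi(k)-1} = \{C_3,\ldots,C_{\xi(k)-1}\}$ and path-width bound $k$, modifying each phase~$i$ so that it outputs the \emph{complete} list of order-$(k+i)$ counterexamples rather than halting at the first one. Since the path-decomposition-based search driving the algorithm already visits every cubic graph of path-width at most $k$, this extension of the enumeration is a direct adaptation of the certified procedure.

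A cubic graph $G$ with $\pw(G) \leq k$ is a counterexample for this input if and only if its girth is at least $\xi(k)$, which by the definition of $\xi$ and \Cref{cubic-pw-girth-bounds-small-pw} forces $\text{girth}(G) = \xi(k)$. Let $i^\ast$ be the first phase in which the enumeration is non-empty; that phase returns exactly the set of minimum-order cubic graphs of path-width at most $k$ and girth $\xi(k)$. To retain only those of path-width \emph{exactly}~$k$, as required by the theorem, we compute the exact path-width of each output graph; this filtering step is only needed when $\xi(k) = \xi(k-1)$, that is for $k \in \{4,7,10\}$, where the witnesses for parameter $k-1$ also appear in the $k$-enumeration. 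Comparing the resulting filtered list against the table's entries proves the theorem.

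The main obstacle is the case $k=10$: the enumeration is by far the largest there, and certifying that $G(10)$ genuinely has path-width $10$ requires both the upper and the lower bound. For the classical graphs in the table (cube, twisted cube, Petersen, Heawood, Pappus, McGee, Tutte Coxeter), the path-width is tabulated in the literature, matches the claimed $k$, and the stated girth is immediate from the standard constructions. For $G(10)$ the lower bound on path-width follows from the framework itself: the graph does not appear in the analogous enumeration with bound $k=9$, so $\pw(G(10)) > 9$, while the algorithm produces it with bound $10$, giving $\pw(G(10)) = 10$. Feasibility of the $k=10$ enumeration relies on the high-degree-first path-decomposition pruning advertised in the introduction; once termination is completed within that phase, uniqueness follows from inspection of the single cubic graph emitted.
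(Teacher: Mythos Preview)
Your route differs from the paper's. The paper does \emph{not} obtain \Cref{tab:tight-bounds} by running its own algorithm in enumeration mode; instead it takes the complete catalogue of small cubic graphs from~\cite{BCGM13}, computes the path-width of each candidate with SageMath, and reads off the smallest graphs with the required $(k,g)$ pair. Theorem~\ref{thm: unique-smallest-graphs} is then stated as an immediate consequence of this table together with the values $\xi(k)$ supplied by Theorem~\ref{cubic-pw-girth-bounds-small-pw}. So the paper's argument is entirely external to Algorithms~\ref{alg:testing-property-pi-version-1} and~\ref{alg:testing-property-pi-version-2}.

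Your plan is more self-contained, which is appealing, but it carries obligations the paper sidesteps. First, turning the algorithm into a complete enumerator is not quite a ``direct adaptation'': the isomorphism rejection in Algorithm~\ref{alg:testing-property-pi-version-2} (and the $\Aut(U,H)$-orbit shortcuts on $E'$ and $Y$) were justified only for the decision problem, so you must argue that at least one representative of every isomorphism class of counterexamples is still produced, and then deduplicate the output. Second, feasibility for $k=10$ is asserted rather than demonstrated; the paper's own computer runs (Table~\ref{tab:algorithm-examples}) stop at $k=7$, and the $k\in\{8,9,10\}$ bounds are obtained by hand, so invoking the hdf pruning as a guarantee of termination up to order~34 is speculative. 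Third, your path-width certification for $G(10)$ via ``absent from the $k=9$ enumeration'' requires running that enumeration through order~34, not just to the first non-empty phase. None of these are fatal, but the paper's approach---an established cubic-graph census plus an off-the-shelf path-width computation---avoids all three and is the cleaner way to substantiate the table.
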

\begin{figure}[!ht]
	\centering
	\begin{tikzpicture}[scale=.9]
		\def\krad{.7cm}
		\begin{scope}[shift={(0,0)}]
			\begin{scope}
				\def\vxnumber{6}
				\def\angle{360/6}
				\foreach \i in {1,...,\vxnumber}{
					\node[svertex] (\i) at (270-\angle/2+\i*\angle:\krad) {};
				}
				\draw (1)--(2)--(3)--(4)--(5)--(6)--(1)
				(1)--(4) (2)--(5) (3)--(6);
			\end{scope}
			
			\begin{scope}[shift={(2.7,0)}]
				\def\vxnumber{8}
				\def\angle{360/8}
				\foreach \i in {1,...,\vxnumber}{
					\node[svertex] (\i) at (270-\angle/2+\i*\angle:\krad) {};
				}
				\draw (1)--(2)--(3)--(4)--(5)--(6)--(7)--(8)--(1)
				(1)--(6) (2)--(5) (3)--(8) (4)--(7);
			\end{scope}
			
			\begin{scope}[shift={(5.4,0)}]
				\def\vxnumber{8}
				\def\angle{360/8}
				\foreach \i in {1,...,\vxnumber}{
					\node[svertex] (\i) at (270-\angle/2+\i*\angle:\krad) {};
				}
				\draw (1)--(2)--(3)--(4)--(5)--(6)--(7)--(8)--(1)
				(1)--(5) (2)--(6) (3)--(8) (4)--(7);
			\end{scope}

			\begin{scope}[shift={(8.1,0)}]
				\def\vxnumber{5}      
				\def\jumpnumber{2}    
				\def\innerradius{.4cm}
				
				\def\angle{360/\vxnumber}
				\foreach \i in {1,...,\vxnumber}{
					\draw (\angle*\i+90:\krad) -- (\angle*\i+\angle+90:\krad); 
					\draw (\angle*\i+90:\krad) -- (\angle*\i+90:\innerradius);
					\draw (\angle*\i+90:\innerradius) -- (\angle*\i+\angle*\jumpnumber+90:\innerradius);
				}
				\foreach \i in {1,...,\vxnumber}{
					\node[svertex] (v\i) at (\angle*\i+90:\krad) {};
					\node[svertex] (ui\i) at (\angle*\i+90:\innerradius) {};
				}
			\end{scope}

			\begin{scope}[shift={(10.5,0)}]
				\def\vxnumber{14}
				\def\angle{360/14}
				\foreach \i in {1,...,\vxnumber}{
					\node[svertex] (\i) at (270-\angle/2+\i*\angle:\krad) {};
				}
				\draw (14)--(1)--(2)--(3)--(4)--(5)--(6)--(7)--(8)--(9)--(10)--(11)--(12)--(13)--(14)
				(14)--(5)
				(2)--(7)
				(4)--(9)
				(6)--(11)
				(8)--(13)
				(1)--(10)
				(3)--(12)
				;
			\end{scope}
		\end{scope}
		
		\begin{scope}[shift ={(0.4,-3.3)}]
			\def\krad{1.8cm}
			\begin{scope}[shift={(.5,0)}]
				\def\vxnumber{18}
				\def\angle{360/18}
				\foreach \i in {1,...,\vxnumber}{
					\node[svertex] (\i) at (270-\angle/2+\i*\angle:\krad) {};
				}
				\draw (1)--(2)--(3)--(4)--(5)--(6)--(7)--(8)--(9)--(10)--(11)--(12)--(13)--(14)--(15)--(16)--(17)--(18)--(1)
				(1)--(6)
				(2)--(9)
				(3)--(14)
				(4)--(11)
				(5)--(16)
				(7)--(12)
				(8)--(15)
				(10)--(17)
				(13)--(18);
			\end{scope}
			
			\begin{scope}[shift={(4.8,0)}]
				\def\vxnumber{24}
				\def\angle{360/24}
				\foreach \i in {1,...,\vxnumber}{
					\node[svertex] (\i) at (270-\angle/2+\i*\angle:\krad) {};
				}
				\draw
				(1)--(2)--(3)--(4)--(5)--(6)--(7)--(8)--(9)--(10)--(11)--(12)--(13)--(14)--(15)--(16)--(17)--(18)--(19)--(20)--(21)--(22)--(23)--(24)--(1)
				(1)--(13)
				(7)--(19)
				(2)--(9)
				(24)--(17)
				(12)--(5)
				(8)--(15)
				(14)--(21)
				(3)--(20)
				(6)--(23)
				(11)--(18)
				(4)--(16)
				(10)--(22)
				;
			\end{scope}
			
			\begin{scope}[shift={(9.2,0)}]
				\def\vxnumber{30}
				\def\angle{360/30}
				\foreach \i in {1,...,\vxnumber}{
					\node[svertex] (\i) at (270-\angle/2+\i*\angle:\krad) {};
				}
				\draw
				(1)--(2)--(3)--(4)--(5)--(6)--(7)--(8)--(9)--(10)--(11)--(12)--(13)--(14)--(15)--(16)--(17)--(18)--(19)--(20)--(21)--(22)--(23)--(24)--(25)--(26)--(27)--(28)--(29)--(30)--(1);
				\draw
				(1)--(10)
				(4)--(25)
				(7)--(16)
				(13)--(22)
				(19)--(28)
				(2)--(15)
				(3)--(20) 
				(8)--(21)
				(9)--(26)
				(14)--(27)
				(5)--(12)
				(6)--(29)
				(11)--(18)
				(17)--(24)
				(23)--(30)
				;
			\end{scope}
		\end{scope}
		
		\begin{scope}[shift={(0,-9)}]
			\begin{scope}[shift={(1.8,0)}]
				\def\vxnumber{34}
				\def\krad{3cm}
				\def\angle{360/34}
				\foreach \i in {1,...,\vxnumber}{
					\node[svertex] (\i) at (342.5-\angle/2+\i*\angle:\krad) {};
				}
				\draw
				(1)--(2)--(3)--(4)--(5)--(6)--(7)--(8)--(9)--(10)--(11)--(12)--(13)--(14)--(15)--(16)--(17)--(18)--(19)--(20)--(21)--(22)--(23)--(24)--(25)--(26)--(27)--(28)--(29)--(30)--(31)--(32)--(33)--(34)--(1);
				\draw
				(3)--(10)
				(4)--(31)
				(11)--(18)
				(14)--(21)
				(17)--(24)
				(34)--(7)
				(1)--(16)
				(5)--(20)
				(6)--(25)
				(15)--(30)
				(2)--(27)
				(19)--(28)
				(23)--(32)
				(8)--(29)
				(9)--(22)
				(12)--(33)
				(13)--(26)
				;
			\end{scope}
			
			\begin{scope}[shift={(5.25,0)}]
				\node (lcong) at (0,0) {$\cong$};
			\end{scope}
			
			\begin{scope}[shift={(8.5,0)}, scale = 1]
				\def\vxnumber{12}
				\def\krad{3cm}
				\def\angle{360/12}
				\foreach \i in {1,...,\vxnumber}{
					\node[svertex] (\i) at (270-\angle/2+\i*\angle:\krad) {};
				}
				\draw
				(1)--(2)--(3)--(4)--(5)--(6)--(7)--(8)--(9)--(10)--(11)--(12)--(1);
				\def\irad{2.7cm}
				\foreach \i in {13,...,24}{
					\node[svertex] ({\i}) at (270-\angle/2+\i*\angle:\irad) {};
				}
				\draw (1)--(13) (2)--(14) (3)--(15) (4)--(16) (5)--(17) (6)--(18) (7)--(19) (8)--(20) (9)--(21) (10)--(22) (11)--(23) (12)--(24);
				
				\def\iirad{2.8}
				\node[svertex] (25) at (-\iirad,-\iirad) {};
				\node[svertex] (26) at (\iirad,-\iirad) {};
				\node[svertex] (27) at (-\iirad,\iirad) {};
				\node[svertex] (28) at (\iirad,\iirad) {};
				\draw (13)--(26)--(17) (26)--(21);
				\draw (14)--(25)--(18) (25)--(22);
				\draw (15)--(27)--(19) (27)--(23);
				\draw (16)--(28)--(20) (28)--(24);
				
				\def\instrad{.3cm}
				\def\instangle{360/6}
				\foreach \i in {29,...,34}{
					\node[svertex] ({\i}) at (270-\instangle/2+\i*\instangle:\instrad) {};
				}

				\draw (19)--(29)--(13)
				(22)--(32)--(16)
				(29)--(32)
				(20)--(30)--(14)
				(23)--(33)--(17)
				(30)--(33)
				(21)--(31)--(15)
				(24)--(34)--(18)
				(31)--(34)
				;
			\end{scope}
		\end{scope}
	\end{tikzpicture}
	\caption{All graphs of path-width $k$ and girth $\xi(k)$ for $k \in \{3,4,5,6,7,8,9,10\}$.
		First row: $K_{3,3}$, cube, twisted cube, Petersen graph, Heawood graph.
		Middle row: Pappus graph, McGee graph, Tutte Coxeter graph.
		Last row: Two drawings of the unique graph of girth~8 and path-width~10, which we denote by $G(10)$. 
		\label{fig:tight-bounds}
	}
\end{figure}
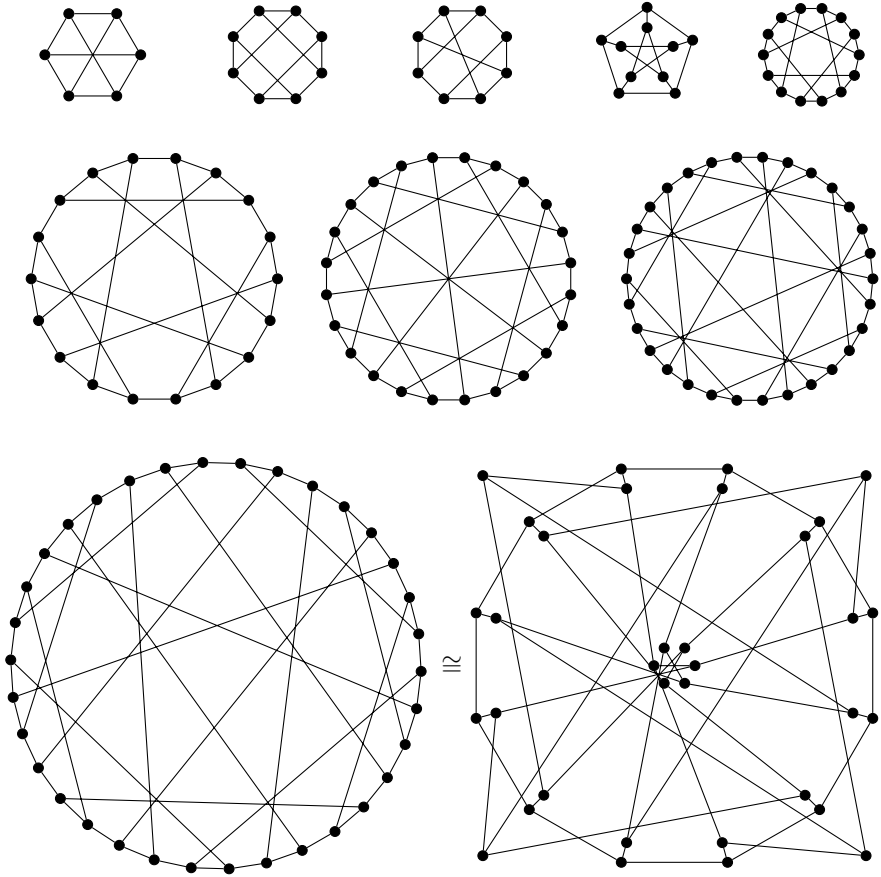

Recall that a \emph{$(d,g)$-cage} is a minimal $d$-regular graph of girth $g$.
The study of cages dates back to~\cite{tutteFirstCagePaper}.
A recent survey on this topic is~\cite{exoo2012dynamicCageSurvey}.
If $d=3$, then there is a unique $(3,g)$-cage for all $g \in \{3, \dots, 8\}$.
Clearly, if a cubic cage of girth~$\xi(k)$ has path-width~$k$, then it appears in the table.
Thus, it is not surprising that several of the graphs above are cages: 
the $K_{3,3}$, the Petersen graph, the Heawood graph and the Tutte Coxeter graph.

As a further application of our algorithm we obtain that $\{K_{3,3}, G_1, \dots, G_6\}$ (see \Cref{fig: reductions for pw3g4}) is unavoidable for the class of cubic graphs of path-width~3 and girth~4.
We exploit this to prove the following classification.

\begin{restatable}{thm}{pwthreegfour}
	\label{thm: pw3g4} \hfill
	\begin{enumerate}[(i)]
		\item \label{itm: 3connected pw3g4} Any 3-connected cubic graph of path-width~3 and girth~4 can be obtained from a $K_{3,3}$ by a finite number of the construction steps $C_1$ and $C_2$ (see \Cref{fig: reductions for pw3g4}).
		\item \label{itm: general pw3g4} Any cubic graph of path-width~3 and girth~4 can be constructed from a $K_{3,3}$ by applying a finite number of the construction steps $C_1,\dots , C_6$ (see Figure \ref{fig: reductions for pw3g4}).
	\end{enumerate}	
\end{restatable}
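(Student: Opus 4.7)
The plan is to prove both parts by induction on $|V(G)|$, invoking the unavoidability of $\{K_{3,3}, G_1, \ldots, G_6\}$ for cubic graphs of path-width at most~3 and girth at least~4, which is obtained as an application of the algorithmic framework. The base case is $K_{3,3}$ itself, which requires no construction step. For the inductive step, given a cubic graph $G$ of path-width~3 and girth~4, unavoidability yields a copy of one of the $G_i$ (or a $K_{3,3}$ copy, in which case $G = K_{3,3}$). I would then argue that in each case the corresponding construction step $C_i$ is reversible: applying $C_i^{-1}$ at the identified copy of $G_i$ produces a strictly smaller cubic graph $G'$ of path-width at most~3 and girth at least~4, to which the induction hypothesis applies. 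Reassembling by applying $C_i$ to the decomposition of $G'$ recovers $G$.

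The verification splits naturally into three local checks for each reduction $C_i^{-1}$. Cubicity follows because each $C_i$ replaces a bounded sub-configuration with another whose boundary degrees match those of a cubic graph. For path-width, given a path-decomposition of $G$ of width at most~3, I would modify only those bags containing vertices of the $G_i$-copy to obtain a valid decomposition of $G'$; because $G_i$ has constant size, this is a bounded local surgery. Girth preservation is the most delicate: the reduction may introduce a new short cycle if external paths between boundary vertices of the reduced configuration are too short. I would control this by choosing an appropriate copy of $G_i$ in~$G$ (for example minimising in some sense, or selecting one whose surrounding structure is fixed by the case analysis of the algorithm) and by using cubicity together with $g(G) \geq 4$ to exclude such short external paths.

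Part~\ref{itm: 3connected pw3g4} is then obtained by showing that the presence of~$G_i$ in~$G$ for every $i \in \{3,4,5,6\}$ forces a vertex cut of size at most~$2$. Indeed, the graphs $G_3,\ldots,G_6$ should be precisely those configurations whose connection to the rest of~$G$ goes through at most two vertices (e.g.\ containing structures like two parallel edges up to subdivision, or small ears attached by two vertices); this is a direct inspection of the finite list. Hence, if~$G$ is $3$-connected, only $G_1$ or $G_2$ can occur in the unavoidable set, and the reductions required reduce to $C_1^{-1}$ and $C_2^{-1}$. In both parts, the induction closes because the reverse constructions $C_i$, applied to $G'$ at the image of the reduced configuration, reproduce the original~$G$.

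The main obstacle is the girth check for each of the six reductions: one must rule out that contracting or rerouting within a copy of some~$G_i$ can create a triangle through edges lying outside that copy. This requires combining the local structure forced by containing~$G_i$ with the global assumptions of cubicity and girth~$4$, and is the step where a careful case analysis — essentially mirroring the branching of the algorithm — is unavoidable. Once these case checks are performed, the induction and the $3$-connectedness refinement for~\ref{itm: 3connected pw3g4} follow cleanly.
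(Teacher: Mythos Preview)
Your overall architecture matches the paper: induction on $|V(G)|$, invoke the algorithmically verified unavoidability of $\{K_{3,3},G_1,\dots,G_6\}$, observe that each of $G_3,\dots,G_6$ forces a 2-edge-cut so that 3-connectedness leaves only $G_1,G_2$, and apply the inverse reductions. Part~\eqref{itm: 3connected pw3g4} goes through essentially as you describe; there the paper simply notes that the reduced graph $G'$ is a \emph{minor} of $G$, so path-width $\leq 3$ is immediate by minor-monotonicity rather than by any bag surgery.

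The genuine gap is in part~\eqref{itm: general pw3g4}, and it is not where you think. You flag girth preservation as ``most delicate''; the paper dismisses it in one line (``we leave it to the reader''), and indeed the local replacements cannot create triangles once one uses cubicity and $g(G)\geq 4$. The actual work is the path-width bound for $R_4$ and $R_6$. For $i\in\{1,2,3,5\}$ the paper again just observes that $G'$ is a minor of $G$. But for $i\in\{4,6\}$ the reduced graph $G'$ is \emph{not} a minor of $G$: the reduction crosses a bridge or a 2-edge-separator and inserts new edges joining vertices that, in an arbitrary width-3 path-decomposition of $G$, may lie in bags far apart on opposite sides of the separator. Your ``modify only those bags containing vertices of the $G_i$-copy'' does not produce a valid decomposition of $G'$ in these cases, because the new edges need a common bag that no local edit supplies.

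The paper closes this gap with two dedicated lemmas. For a bridge $uv$ (the $R_6$ case) it shows that each side $K_u-u$ admits a width-3 path-decomposition whose first bag contains $N(u)\setminus\{v\}$; for a minimal 2-edge-separator $\{uv,u'v'\}$ (the $R_4$ case) it shows that one side $K_{u,u'}-x$ admits a width-3 decomposition whose first bag contains the relevant boundary vertices. Both proofs work by replacing the opposite side by a $K_4$-minor (guaranteed by a separate lemma for subcubic girth-4 graphs with at most two degree-2 vertices), taking a smooth width-3 decomposition of this auxiliary graph, and using the hdf machinery to force the $K_4$ to occupy the first bags. The two controlled decompositions are then spliced with a few intermediate bags. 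This is the missing idea in your plan; without it, the path-width bound after $R_4$ and $R_6$ is unjustified.
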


\medskip
\noindent
\textbf{Techniques for the algorithm.}
In essence, the algorithm checks graphs $G\in\Gpwk$ by simulating the traversal of a smooth path-decomposition (see~\cite{Bod98} or \Cref{sec:prelim} of this paper).
The algorithm runs in phases and manages a queue.
At the beginning of phase $i$, the queue contains all pairs of the form $(V_i, G_i)$ which satisfy
\begin{itemize}
	\item $G_i$ is a subgraph of some graph $G\in\Gpwk$ which has a smooth path-decomposition with $V_i$ as its $i$th bag. 
	In particular, $|V(G_i)| = k+i$,
	\item $G_i$ contains all information provided by the bags preceding $V_i$ in the path-decomposition,
	\item $G_i$ is a potential subgraph of a counterexample to $\calU$ being unavoidable for $\Gpwk$.
\end{itemize}
The algorithm checks for the current pair $(V_i, G_i)$  whether adding additional edges to $G_i$ results in a graph in $\Gpwk$ that avoids all graphs in \calU.
If this is the case, then the obtained graph is returned as a certificate that $\calU$ is not unavoidable for $\Gpwk$.
Otherwise, $(V_i,G_i)$ is replaced by new pairs $(V_{i+1}, G_{i+1})$, where $G_{i+1}$ is a supergraph of $G_i$ of order $|V(G_i)|+1$.
If the queue is empty, then the algorithm guarantees that $\calU$ is unavoidable for~$\Gpwk$.

To drastically limit the amount of additional pairs created, we heavily rely on isomorphism rejection to prune the resulting search tree.
In order to avoid a combinatorial explosion, we refine smooth path-decompositions to \emph{high-degree-first} path-decompositions.
These are invaluable when tailoring the algorithm to cubic graphs since they alleviate the need to branch out into new pairs with multiple different candidates for bags in a step where the associated graph has a vertex of degree at least~2.
Instead they give us a unique bag for all subsequent pairs.

\medskip
\noindent
\textbf{Further related work.}
We refer to~\cite{Bod98} for an introduction to path- and tree-width.
Bodlaender and Koster~\cite{bodlaender2011treewidthLowerBounds} survey lower bounds for tree-width (which are of interest since the tree-width of a graph is a lower bound on its path-width), amongst them is the following result of Chandran and Subramanian~\cite{Chand2005}.
If $G$ is a cubic graph of girth at least~$g$, path-width $k$ and tree-width $t$, then
$k \geq t \geq (e(g+1))^{-1}2^{\left\lfloor \nicefrac{(g-1)}{2} \right\rfloor - 2}-2$,
where $e$ denotes Euler's number\footnote{The bound in~\cite{Chand2005} is more general since graphs of average degree $d$ are considered. We inserted $d=3$ and added the inequality $k \geq t$ to emphasise the connection to Theorem~\ref{cubic-pw-girth-bounds-small-pw} and Theorem~\ref{cubic-pw-girth-bounds-general}.}. Theorems~\ref{cubic-pw-girth-bounds-general} and~\ref{cubic-pw-girth-bounds-small-pw} are stronger than this exponential bound if the path-width is less than~26.
In particular, our bounds provide a proper improvement in the context of path-width computations.

\medskip
\noindent
\textbf{Outline.}
The next section contains basic notation and preliminary results.
In particular, we introduce the new concept of high-degree-first path-de\-com\-po\-si\-tions.
In \Cref{sec:algorithm}, we give a detailed discussion of the algorithm that checks whether a set is unavoidable for a path-width bounded class.
\Cref{sec: pw-g-bounds} contains the proofs of Theorems~\ref{cubic-pw-girth-bounds-general} to~\ref{thm: pw3g4}.

\section{Preliminaries}
\label{sec:prelim}
\textbf{Basic notation.}
The notation for this paper is based on \cite{Die10}, but we briefly summarise what we need here.
All graphs are non-empty, finite, and simple.
We write $uv$ for an edge with ends $u$ and~$v$ and $\EXY{v}$ denotes the set of edges incident to the vertex $v$.
The set of neighbours of $v$ in $G$ is $\NX[G]{v}$ or \NX{v}.
A \emph{path} is a graph $P$ with $V(P)=\{v_1,\ldots,v_n\}$ and $E(P) =\{v_1v_2,\ldots,v_{n-1}v_n\}$, for which we write $P=v_1\ldots v_n$.
The \emph{order} $|V(P)|$ of a path $P$ is denoted by $|P|$ and its \emph{length} is $\|P\| = |E(P)| = |P|-1$.
If $u$ and $v$ are distinct vertices of a tree $T$, then the unique path in $T$ joining $u$ and $v$ is denoted by $uTv$.
If $|uTv| \geq 3$, then we set $\overcirc{u}T\overcirc{v}\coloneqq uTv-\{u,v\}$.
We write $G+H$ for the disjoint union of two graphs $G$ and $H$.
The empty graph on $n$ vertices is denoted by $E_n$ and $K_{r_1,r_2}$ is the complete bipartite graph with parts of size $r_1,r_2$.
The \emph{girth} of~$G$ is the minimum length of a cycle in $G$.
We set 
\begin{align*}
	\xi \colon \mathbb{N}_{\geq 3} &\to \mathbb{N},\\
	k &\mapsto \max\{g \colon \text{there is a simple cubic graph of girth $g$ and path-width $k$} \}.
\end{align*}

Let $G$ and $H$ be two graphs and $\varphi$ be a bijection with domain $V(G)$.
For $U\subseteq V(G)$ and $F\subseteq E(G)$ we define $\varphi(U) \coloneqq \{\varphi(u)\colon u\in U\}$ and $\varphi(F) \coloneqq \{\varphi(u)\varphi(v)\colon uv\in F\}$.
We write $\varphi(G)$ for the graph with vertex set $\varphi(V(G))$ and edge set $\varphi(E(G))$.
The map $\varphi$ is an isomorphism of~$G$ and~$H$ if $H=\varphi(G)$.
If $\varphi(G)=G$, then $\varphi$ is an \emph{automorphism} of $G$.
The graph $G$ is \emph{isomorphic} to $H$, denoted $G\cong H$, if an isomorphism of $G$ and $H$ exists.
The automorphisms of a graph $G$ form a group and any subgroup $\Gamma$ of this group naturally acts on $V(G)$.
Let $v \in V(G)$.
The \emph{orbit} of $v$ is the set $v^\Gamma \coloneqq \{\varphi(v)\colon \varphi\in \Gamma\}$.
The \emph{stabiliser} of $v$  is $\Gamma_v \coloneqq \{\varphi\in\Gamma\colon \varphi(v) = v\}$.

\medskip
\noindent
\textbf{Path-decompositions and path-width.}
Let $G$ be a graph, $P=1\ldots n'$ a path, and $\calV=\{V_i\colon i\in P\} \subseteq 2^{V(G)}$.
The pair $(P,\calV)$ is a \emph{path-decomposition of~$G$} if:
\begin{enumerate}[(i)]
	\item every vertex $v\in V(G)$ is contained is some set $V_i$,
	\item for each edge $uv\in E(G)$ there exists a set $V_i$ such that $\{u,v\}\subseteq V_i$, and
	\item the set $\{i\colon v\in V_i\}$ induces a subpath of $P$ for all $v\in V(G)$.
\end{enumerate}
The sets $V_i$ are \emph{bags} of the decomposition and the \emph{width of $(P,\calV)$} is $\max\{|V_i|\colon i\in P\} -1$.
Furthermore, the \emph{path-width of~$G$} is the minimal width of any of its path-decompositions.
A path-decomposition $(P,\calV)$ of width $k$ is \emph{smooth} if all bags have cardinality $k+1$ and $|V_{i}\cap V_{i+1}| = k$ for all $1\leq i<n'$.
Any graph of path-width~$k$ allows for a smooth width $k$ path-decomposition~\cite{Bod98}.

The path-decompositions occurring in the remainder of this paper adhere to the naming convention in the definition above, that is, their path is $P=1\ldots n'$ and their bags are $\calV=\{V_i\colon i\in P\}$ (or $\calW=\{W_i\colon i\in P\}$ if another path-decomposition is required).

For $i \in \{1, \dots, n'-1\}$, we say that a vertex $v$ \emph{enters} bag $V_i$ if it is the unique vertex in $V_{i+1}\setminus V_i$.
Analogously, $v$ \emph{leaves} $V_i$ if it is the unique vertex in $V_i\setminus V_{i+1}$.
Moreover, we define the \emph{graph associated with $V_i$} as
\begin{displaymath}
	G_i \coloneqq G\left[\bigcup_{1 \leq j \leq i}V_j \right] - E\left(G\left[V_i\right] \right).
\end{displaymath}
Intuitively, $G_i$ contains all information provided by the bags preceding $V_i$ as it has (exactly) the edges incident to vertices that have left already.
Note that if $v$ leaves $V_i$, then $G_{i+1} = G_i + \EXY{v} + v_{i+1}$ for some new vertex $v_{i+1}$.
We say $u$ is a \emph{new neighbour} of $v$ if $v$ is the vertex leaving $V_i$ and $u\in\NX[G_{i+1}]{v} \setminus \NX[G_i]{v}$.
Thus, the new neighbours of $v$ are exactly those vertices $u$ in $G_{i+1}$ for which $uv$ is an edge of $G$ that was not already present in $G_i$.

In the remainder of the paper, \calG{} is a graph class and $\calU$ is a finite set of graphs.
We set
\begin{align*}
	\Gpwk &\coloneqq \{G \in \calG \colon \pw(G) \leq k \}~\text{and}\\
	\super{\calC}&\coloneqq \{G\colon U\cong U' \subseteq G~\text{for some}~U \in \calU\}.
\end{align*}

With this, the question whether $\calU$ is unavoidable for $\Gpwk$ translates to: $\Gpwk \subseteq \super{\calU}?$

\medskip
\noindent
\textbf{High-degree-first path-decompositions.}
In the following, we prove that we can make strong assumptions on the path-decompositions of cubic graphs without loss of generality.
These serve as a powerful tool for both tailoring our algorithm to cubic graphs and proving results on the girth of cubic graphs by hand.
\begin{observation}
	\label{neighbours-present-can-leave}
	Let $(P,\calV)$ be a smooth path-decomposition of $G$ and let $i \in \{1, \dots, n'-1\}$.
	If $\NX[G]{v} \subseteq V(G_i)$ for some $v \in V_i$, then there exists a smooth path-decomposition $(P,\calW)$ of~$G$ such that $V_j = W_j$ for $j\leq i$, and $v$ leaves $W_i$.
\end{observation}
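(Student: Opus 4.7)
The plan is to modify $\calV$ only on the bags after position $i$, by swapping $v$ with the vertex that originally leaves $V_i$. First I would dispose of the trivial case where $v$ already leaves $V_i$ by taking $\calW = \calV$. So assume otherwise and let $v_i$ be the vertex with $V_{i+1} = (V_i \setminus \{v_i\}) \cup \{u_i\}$, where $u_i$ is the entering vertex. Let $i''$ be the largest index with $v \in V_{i''}$; note $i'' \geq i+1$ in this case. Define
\begin{align*}
    W_j &= V_j &&\text{for } j \leq i \text{ and for } j > i'', \\
    W_j &= (V_j \setminus \{v\}) \cup \{v_i\} &&\text{for } i+1 \leq j \leq i''.
\end{align*}

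Next I would verify that $(P,\calW)$ is a smooth path-decomposition of $G$. Since $\calV$ is smooth and $v_i$ leaves at step $i$, we have $v_i \notin V_j$ for $j>i$, so $|W_j| = k+1$ for every $j$ and $|W_j \cap W_{j+1}| = k$ at each step; at step $i$ itself, $W_{i+1} = (W_i \setminus \{v\}) \cup \{u_i\}$, so $v$ indeed leaves $W_i$. Axioms (i) and (iii) of a path-decomposition are routine bookkeeping: $v$ occurs in $\calW$ on the prefix $J_v \cap \{1,\dots,i\}$ of its original interval $J_v$, $v_i$ now occurs on the extended interval $J_{v_i} \cup \{i+1,\dots,i''\}$, and both are still contiguous; all other vertices have unchanged bag memberships.

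The only delicate part is axiom (ii), showing that every edge is still covered. Edges not incident to $v$ are trivial: they are covered in the same bag $V_j$ as before, which has enough of its vertices to cover them even after the $v \mapsto v_i$ swap (since neither endpoint is $v$). For an edge $vy$, I would invoke the hypothesis: since $y \in \NX[G]{v} \subseteq V(G_i)$, there is some $j' \leq i$ with $y \in V_{j'}$. Let $j$ be an index with $v,y \in V_j$ in the original decomposition. If $j \leq i$, then $W_j = V_j$ still contains both endpoints. If $j > i$, then $j \in J_y$ and $j' \in J_y$ lie on opposite sides of $i$, so contiguity of $J_y$ forces $y \in V_i$; combined with $v \in V_i$ this places the edge in $V_i = W_i$.

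The main obstacle is precisely this last step: the only reason the construction works is that every neighbour of $v$ must actually share a bag with $v$ at some index $\leq i$, and this is what the hypothesis $\NX[G]{v} \subseteq V(G_i)$ yields via the contiguous-subpath axiom. Everything else is a direct, almost mechanical, verification.
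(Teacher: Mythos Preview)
Your proof is correct and follows exactly the same idea as the paper: swap $v$ with the vertex that actually leaves $V_i$ in all later bags. The paper's own proof is a three-line sketch of this same construction, so your version is simply a more detailed execution of the identical approach.
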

\begin{proof}
	Let $v\in V_i$ be such a vertex and assume that some other vertex $u\neq v$ leaves $V_i$.
	Replacing $v$ by $u$ in all bags of index greater than $i$ yields another path-decomposition $(P,\calW)$ of $G$ as all edges with $v$ as an end are already covered by the bags up to $V_i$.
	It is also smooth, satisfies $V_j = W_j$ for $j\leq i$, and $v$ leaves the bag $W_i$ as required.
\end{proof}

\begin{lemma}
	\label{degree-2-vertices-leave}
	Let $(P,\calV)$ be a smooth path-decomposition of $G$, let $i \in \{1, \dots, n'-1\}$.
	If $G_i$ contains a vertex $v$ with $|\NX[G]{v} \setminus \NX[G_i]{v}| \leq 1$, then there is a smooth path-decomposition $(P, \calW)$ of $G$ with $V_j = W_j$ for $j < i$, $V_{i-1}\setminus V_i = W_{i-1}\setminus W_i$ and $v$ leaves $W_i$.
\end{lemma}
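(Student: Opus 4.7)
The strategy would be to build $\calW$ as a local modification of $\calV$ at and beyond bag $V_i$, exploiting the fact that $v$ has at most one ``pending'' neighbour. I first restrict to the case $v \in V_i$: if $v$ had already left before $V_i$, then the constraints $W_j = V_j$ for $j < i$ and $V_{i-1} \setminus V_i = W_{i-1} \setminus W_i$ would force $v \notin W_i$, contradicting ``$v$ leaves $W_i$''. For $v \in V_i$ a direct unpacking gives $\NX[G_i]{v} = \NX[G]{v} \cap (V(G_i) \setminus V_i)$, so $\NX[G]{v} \setminus \NX[G_i]{v} = (\NX[G]{v} \cap V_i) \cup (\NX[G]{v} \setminus V(G_i))$; its elements are precisely the neighbours of $v$ still present in $V_i$ together with those not yet seen. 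The argument then splits by the cardinality of this set.

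If the set is empty, then $\NX[G]{v} \subseteq V(G_i) \setminus V_i$ and \Cref{neighbours-present-can-leave} directly supplies a smooth decomposition with the required properties. If the set has one element $u \in V_i$, I would adapt the swap argument of that observation: let $v' \in V_i \setminus V_{i+1}$ be the vertex originally leaving $V_i$, and take $\calW = \calV$ if $v' = v$. Otherwise, define $W_j = V_j$ for $j \leq i$ and, for $j > i$, set $W_j = (V_j \setminus \{v\}) \cup \{v'\}$ whenever $v \in V_j$ and $W_j = V_j$ otherwise. A bag-by-bag check confirms smoothness; $v$'s range contracts to end at $i$; $v'$'s range extends contiguously, gluing its original range to the bags acquired through the swap; past-neighbour edges of $v$ remain covered by the unchanged prefix; and the sole remaining edge $vu$ is covered by $W_i = V_i$.

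The delicate case is $|\NX[G]{v} \setminus \NX[G_i]{v}| = 1$ with $u \notin V(G_i)$. I would first observe that, under the mild hypothesis $|\NX[G]{v}| \geq 2$ (which holds in the cubic setting), one must have $v \in V_{i-1}$: otherwise $v$ entered at $V_i$, every past-neighbour edge would be uncoverable, and so $\NX[G]{v}$ would equal $\{u\}$, violating $|\NX[G]{v}| \geq 2$. Assuming $v \in V_{i-1}$, I would set $W_i = (V_{i-1} \cap V_i) \cup \{u\}$, thereby swapping the originally entering vertex $x_i$ of $V_i$ for $u$, and extend this to the tail by transporting $u$ forward through the bags with indices in $[i, j^*-1]$ (where $j^* > i$ is $u$'s original entry index), shifting $x_i$'s trajectory one bag later, and substituting $v \mapsto a$ in every subsequent bag still containing $v$ (with $a = V_i \setminus V_{i+1}$ the original leaver at $V_i$) so that $v$'s range terminates at $W_i$. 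The principal obstacle is the validity check: smoothness reduces to routine bag-arithmetic, but ensuring edge coverage requires careful bookkeeping of the shifted ranges, the critical edges to verify being $vu$ (covered by $W_i$ by construction), the past-neighbour edges of $v$ (by the unchanged prefix), and the edges incident to $x_i$, $a$, and $u$ in the modified tail.
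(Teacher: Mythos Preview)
Your case analysis is sound and the easy cases are handled correctly; note, however, that your second case ($u\in V_i$) is already subsumed by the first, since $u\in V_i\subseteq V(G_i)$ gives $\NX[G]{v}\subseteq V(G_i)$ and \Cref{neighbours-present-can-leave} applies directly.

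The genuine gap is in the delicate case $u\notin V(G_i)$. The three operations you list---adding $u$ to the bags with indices in $[i,j^*-1]$, delaying $x_i$'s entry by one, and substituting $v\mapsto a$ in later bags---do not balance. Take any index $\ell\in\{i+1,\ldots,j^*-1\}$: the original bag $V_\ell$ contains $v$ (since $vu\in E(G)$ forces $v$ to persist until $u$ enters) but neither $a$ nor $u$, while your $x_i$-shift only affects bag $i$. Your recipe therefore yields $W_\ell=(V_\ell\setminus\{v\})\cup\{a,u\}$, which has $k+2$ elements. The description also never says which bag is deleted to compensate for the new one inserted at position $i$, so the path $P$ changes length. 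In short, the ``routine bag-arithmetic'' does not close.

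The paper avoids this with a preparatory step you are missing: before modifying anything near bag $i$, it applies \Cref{neighbours-present-can-leave} at the \emph{far} end---at the first bag $V_j$ containing $u$---to arrange that $v$ leaves exactly there. Once $v$'s range terminates precisely where $u$'s begins, a single size-preserving substitution $v\mapsto u$ in the bags $V_i,\ldots,V_{j-1}$ does the whole job; the now-redundant bag $V_j$ is deleted and a new bag $(V_{i-1}\cap V_i)\cup\{u\}$ is inserted at position $i$, keeping $P$ unchanged. No auxiliary vertex $a$ is needed. Your instinct to pull $u$ back to bag $i$ is correct, but the clean way to make room for it is to let $u$ replace $v$ directly rather than to run two independent swaps in parallel.
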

\begin{proof}
	Let $v$ be as in the statement and assume that $u\neq v$ is the vertex leaving~$V_i$.
	If $\NX[G]{v}\subseteq V(G_i)$, then we apply \Cref{neighbours-present-can-leave} to obtain the claim.
	
	Hence, we assume there exists a vertex $w\in\NX[G]{v}\setminus V(G_i)$ and let $V_j$ be the bag of lowest index containing $w$.
	By assumption, we have $j>i$ and \Cref{neighbours-present-can-leave} lets us assume that $v$ leaves $V_j$ or $V_j$ is the last bag.
	We describe how to obtain the desired path-decomposition $(P,\calW)$ in case the bag $V_{j+1}$ exists, making note of what would change if it does not in parentheses.
	The process is illustrated in \Cref{fig:degree-2-vertices-leave}.		
	\begin{figure}[htb]
		\centering
		\begin{tikzpicture}[scale=0.93,line width=1pt]
			\Bag{(0,0)}{Vi-1}{$u,v$\phantom{ $w$}}{$x$}{2}{black};
			\Bag{(2.8,0)}{Vi}{$u,v$ {\color{stdgreen} $w$}}{$v_i$}{2}{black};
			\Bag{(7,0)}{Vj}{$\phantom{u, }v, w$}{\phantom{u}}{2}{black};
			\Bag{(10,0)}{Vj+1}{$\phantom{u, v, }w$}{$v_{j+1}$}{2}{dashed};
			
			\node[] at (0,1) {$V_{i-1}$};
			\node[] at (2.8,1) {$V_{i}$};
			\node[] at (7,1) {$V_{j}$};
			\node[] at (10,1) {$V_{j+1}$};
			
			\draw[dashed] (-1.15,0) to (Vi-1);
			\draw[] (Vi-1) to node[above] {\footnotesize$-x$\color{stdgreen}$-v$} node[below] {\footnotesize$+v_i$\color{stdgreen}$+w$} (Vi);
			\draw[] (Vi) to node[above] {\footnotesize$-u$} node[below] {\footnotesize$+v_{i+1}$} (4.5,0);
			\draw[] (5.5,0) to node[above] {\footnotesize$-y$} node[below] {\footnotesize$+w$} (Vj);
			\draw[dashed] (Vj) to node[above] {\footnotesize$-v$} node[below] {\footnotesize$+v_{j+1}$} (Vj+1);
			\draw[dashed] (Vj+1) to (11.15,0);
			
			\draw[stdred] ($(Vj.north west)+ (-0.1,0.1)$) to ($(Vj.south east)+ (0.1,-0.1)$);
			\draw[stdred] ($(Vj.north east)+ (0.1,0.1)$) to ($(Vj.south west)+ (-0.1,-0.1)$);
			\draw[stdred, dashed] (6.25,-0.5) .. controls (6.75,-1.5) .. (7.25,-1.5);
			\draw[stdred, dashed] (7.25,-1.5) to node[above] {\footnotesize$-y -v$} node[below] {\footnotesize$+w +v_{j+1}$} (10,-1.5) .. controls (10.5,-1.5) .. (Vj+1);
			
			\draw[stdgreen] (2.875,0.1) to (3.125,0.3);
			\draw[stdgreen] (8.9,-1.325) to (9.15,-1.125);
			\draw[stdgreen] (8.125,-1.85) to (8.375,-1.65);
			
			\Bag{(1.5,-2)}{V}{$u,v$\phantom{ $w$}}{$w$}{2}{stdblue};
			\draw[stdblue] (Vi-1) to node[above right,xshift=-2pt,yshift=-3pt] {\footnotesize$-x$} node[below left,xshift=2pt,yshift=3pt] {\footnotesize$+w$} (V);
			\draw[stdblue] (V) to node[above left,xshift=2pt,yshift=-3pt] {\footnotesize$-v$} node[below right,xshift=-2pt,yshift=3pt] {\footnotesize$+v_i$} (Vi);
			\node[stdblue] at (1.5,-3) {$V'$};
		\end{tikzpicture}
		\caption{The path-decomposition constructed in the proof of \Cref{degree-2-vertices-leave}.
			\label{fig:degree-2-vertices-leave}}
	\end{figure}
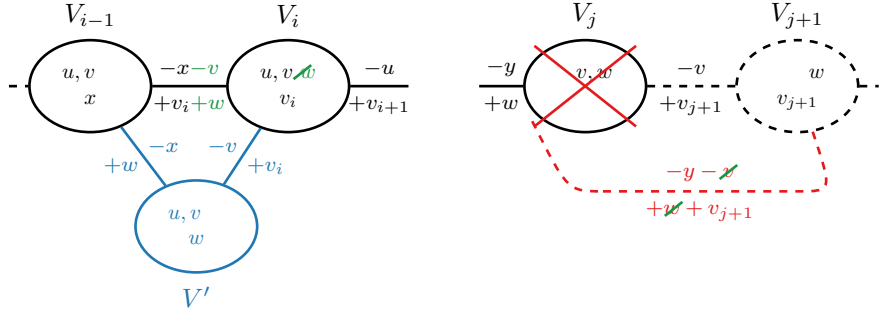
	
	First, we delete the bag $V_j$, connecting $V_{j-1}$ to $V_{j+1}$ (if it exists).
	Note that if a bag uniquely covers an edge of $G$, then the vertex entering and the one leaving it are an end of this edge, otherwise the bag before or after would have done this as well.
	The result is a path-decomposition of $G-vw$ ($G-w$).
	This step is marked in red in the figure.
	
	Next, we replace all occurrences of $v$ in the bags $V_i,...,V_{j-1}$ by $w$.
	As all these bags only contain $v$, they continue to have $k+1$ elements.
	Also the now neighbouring bags $V_{j-1}$ and $V_{j+1}$ have $k$ vertices in common.
	Since we only removed $v$ from bags and all its neighbours but $w$ already shared a bag before~$V_i$, this is a path-decomposition of $G-vw$ (now also in the case that $V_j$ was the last bag, where $V_{j-1}$ contains all neighbours of $w$).
	
	Finally, we insert the bag $V' = V_{i-1}\cup\{w\}\setminus\{x\}$, where $x$ is the vertex in $V_{i-1}\setminus V_i$, between $V_{i-1}$ and $V_i$ to make the decomposition smooth and turn it into one of $G$ as this bag contains both $v$ and $w$.
	This completes the construction and the proof.
\end{proof}

By iteratively applying this lemma to a smooth path-decomposition of a cubic graph $G$, we may assume that the vertex leaving bag $V_i$ has degree at least~2 in $G_i$, whenever such a vertex exists.
If, additionally, a degree~3 vertex is chosen to leave whenever present, then the path-decomposition is called \emph{high-degree-first (hdf)}.

\section{Algorithmically checking properties of boun\-ded path-width graphs}
\label{sec:algorithm}

We start this section with a toy example that illustrates the by-hand method we automate.
Let $\calG$ be the class of all cubic graphs and $k=3$.
We prove that the set $\calU = \{C_3, C_4\}$ is unavoidable for $\calG^k$.
To this end, let $(P, \calV)$ be a smooth width~3 path-decomposition of a graph $G \in \calG^k$.
Let $v$ and $v'$ be the vertices leaving the bags $V_1$ and $V_2$, respectively.
Since $v$ leaves $V_1$, we obtain $N_G(v) = V_1\setminus \{v\}$.
If $v'$ is a neighbour of $v$, then it has two new neighbours in $V_2\setminus\{v'\}$.
Consequently, $v$ and $v'$ have a common neighbous, that is, $G$ contains a $C_3$.
Otherwise, $v'$ is the vertex entering $V_2$.
In this case, $N_G(v') = V_2\setminus\{v'\} = V_1\setminus\{v\}$ and we obtain a $C_4$.
Altogether, it follows that $\xi(3) = 4$.

We describe an algorithm that answers the question whether $\Gpwk \subseteq \super{\calU}$ by implementing the proof technique illustrated in the previous paragraph.
The only requirement on \calG{} is that it is given together with a membership test.
By checking all graphs of order at most $k$ explicitly, we may assume that \calG{} only contains graphs with at least $k+1$ vertices.
This ensures that all graphs in \calG{} of path-width at most~$k$ have a smooth path-decomposition of width~$k$.

\medskip
\noindent
\textbf{Description of the algorithm.}
For each smooth path-decomposition $(P, \calV)$ of a graph $G$ and each $i \in \{1, \dots, n'\}$ we say that $G$ \emph{contains} the pair $(V_i, G_i)$.
A pair $(U,H)$ is \emph{good} if every $G \in \Gpwk$ containing it satisfies $G \in \super{\calU}$.
We remark that a pair $(U,H)$ is good if $H \in \super{\calU}$.
This holds as any graph $G$ containing $(U,H)$ has $H$ as a subgraph.
Before we give the formal description of our algorithm, we make the following observation.
\begin{observation}
	\label{renaming-and-path-decompositions}
	Let $G$ be a graph with path-decomposition $(P,\calV)$.
	Let $\varphi$ be a bijection from $V(G)$ to $V=\{u_1,\ldots,u_k,v_1,\ldots,v_{n'}\}$.
	Renaming the vertices of~$G$ according to $\varphi$, both in $G$ and in its path-decomposition, yields an isomorphic graph $\varphi(G)\cong G$ with path-decomposition $(P,\{\varphi(V_i): i\in P\})$.
\end{observation}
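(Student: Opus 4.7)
The plan is a direct verification that the three defining axioms of a path-decomposition transfer under the bijection~$\varphi$. Since $\varphi$ is a bijection on $V(G)$ and the graph $\varphi(G)$ is by definition the graph with vertex set $\varphi(V(G))$ and edge set $\varphi(E(G))$, the map $\varphi$ is itself an isomorphism between $G$ and $\varphi(G)$ by the definition recalled in \Cref{sec:prelim}. This immediately delivers $\varphi(G) \cong G$, so only the path-decomposition statement requires work.

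For the path-decomposition half, I would check each of the three axioms in turn on the pair $(P, \{\varphi(V_i): i\in P\})$ relative to $\varphi(G)$. For the first axiom, every $w \in V(\varphi(G))$ equals $\varphi(v)$ for a unique $v \in V(G)$; taking an index $i$ with $v \in V_i$ from the original decomposition yields $w \in \varphi(V_i)$. For the second, every edge of $\varphi(G)$ has the form $\varphi(u)\varphi(v)$ for some $uv \in E(G)$, so any bag $V_i$ containing both $u$ and $v$ gives a bag $\varphi(V_i)$ containing both $\varphi(u)$ and $\varphi(v)$. For the third, injectivity of $\varphi$ yields $\{i : \varphi(v) \in \varphi(V_i)\} = \{i : v \in V_i\}$, and the right-hand side induces a subpath of $P$ by hypothesis.

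There is essentially no obstacle: the statement is a bookkeeping fact that membership of vertices in bags and the edge relation are both preserved under relabelling. The only point requiring a small amount of care is in the third axiom, where the equality of index sets relies on the injectivity of $\varphi$ (so that $\varphi(v) \in \varphi(V_i)$ genuinely forces $v \in V_i$ rather than merely some other preimage of $\varphi(v)$). Once this is noted, the proof reduces to a single display of the three verifications above.
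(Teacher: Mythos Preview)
Your proposal is correct; the three axioms transfer under~$\varphi$ exactly as you describe, and the injectivity remark for axiom~(iii) is the only point worth singling out. The paper itself offers no proof for this observation, treating it as self-evident, so your argument is simply a spelled-out version of what the authors regard as immediate.
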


This lets us assume that $V(G) = V$ and $V_1 = \{u_1,\ldots,u_k,v_1\}$.
In this setting, the graph~$G_1$ associated with $V_1$ is $(V_1,\emptyset)$.
Hence, if we can show that $(V_1,(V_1,\emptyset))$ is a good pair, then $\Gpwk \subseteq \super{\calU}$.
To see this, consider a graph $G\in\Gpwk$ with a smooth path-decomposition of width~$k$ and rename its vertices such that the first bag is $V_1$.
The graph~$G$ contains $(V_1,(V_1,\emptyset))$ and, hence, $G \in \super{\calU}$.

We are now ready to describe the algorithm, whose pseudocode can be found in Algorithm~\ref{alg:testing-property-pi-version-1}.
Given a graph class $\calG$ with a membership test, some $k \in \mathbb{N}$ and a finite set of graphs $\calU$,
the algorithm manages a queue $Q$ which contains pairs $(U,H)$ and is initialised with the pair $(V_1,(V_1,\emptyset))$ seen above.
We maintain the invariant that, if all pairs in the queue are good, then all graphs in $\Gpwk$ are part of $\super{\calU}$.
This holds initially by the above argumentation and, once the queue is empty, we may return $\Gpwk \subseteq \super{\calU}$.
Next the algorithm iteratively removes a pair $(U,H)$ from the queue.
In order to make sure that the invariant holds, we first check whether a graph $G\in\calG$ with $U$ as its last bag does not contain a subgraph in~\calU{} and serves as a counterexample.
Should this occur, we return this graph.
Otherwise, we go one step further in the decomposition and check all options for the next bag and its corresponding graph, adding these new pairs to the queue.
If any option for a subsequent pair is good, then so is the original.

\begin{algorithm}[!ht]
	\SetKwInOut{Procedure}{procedure}
	\LinesNumbered
	\caption{Base algorithm for checking whether $\Gpwk \subseteq \super{\calU}$.}
	\label{alg:testing-property-pi-version-1}
	\KwIn{A class of graphs $\calG$ with a membership test, a finite set of graphs $\calU$, and a path-width value $k$.}
	\KwOut{An element of $\Gpwk\setminus \super{\calU}$ or \none{} if no such graph exists.}
	\Procedure{Test-Unavoidable-Structures($\calG, \calU, k$)}
	$V_1 \coloneqq \{u_1,\ldots,u_k,v_1\}$\;
	$Q \coloneqq [(V_1,(V_1,\emptyset))]$\;
	\lIf{$(V_1,\emptyset)$\textup{ is good}}
	{
		$Q$.pop()\hspace{-3pt}
	}
	\While{$Q$ \textup{is not empty}}
	{
		$(U,H) \coloneqq Q\text{.pop()}$\;
		\ForEach{$E'\subseteq\{xy\colon x,y\in U, x\neq y\}$}
		{\label{alg:testing-property-pi-version-1-first-for}
			\If{$ H + E'\in\calG \setminus \textup{super}(\calU)$}
			{
				\Return $ H + E'$\;\label{alg:return-counterexample}
			}
		}
		Let $i=|H|-k$\;
		\ForEach{$u\in U$}
		{\label{alg:testing-property-pi-version-1-second-for}
			$U' \coloneqq U\setminus \{u\} \cup \{v_{i+1}\}$\;
			\ForEach{$Y\subseteq U\setminus \{u\}$}
			{\label{alg:testing-property-pi-version-1-third-for}
				$H' \coloneqq H + v_{i+1} + \{uy\colon y\in Y\}$\;
				\If{$H'\notin \textup{super}(\calU)$ \textup{\textbf{and} there exists a} $G\in\calG$ \textup{containing} $(U',H')$}
				{\label{alg:heuristic}
					$Q$.append($(U',H')$)\;
				}
			}
		}
	}
	\Return \none\;
\end{algorithm}

As a final preparation for the correctness proof we extend \Cref{renaming-and-path-decompositions} to pairs.
\begin{observation}
	\label{pairs-contained-and-renaming}
	Let $G$ be a graph containing the pair $(U,H)$.
	For a bijection~$\varphi$ with domain $V(G)$, the graph $\varphi(G)$ contains the pair $(\varphi(U),\varphi(H))$.
\end{observation}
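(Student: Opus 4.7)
The plan is to lift the path-decomposition witnessing that $G$ contains $(U,H)$ through $\varphi$ and verify that both the bag and the associated graph translate correctly. Concretely, since $G$ contains $(U,H)$, by definition there is a smooth path-decomposition $(P,\calV)$ of $G$ together with an index $i \in \{1,\dots,n'\}$ such that $V_i = U$ and $G_i = H$, where
\[
G_i = G\left[\bigcup_{1 \leq j \leq i} V_j\right] - E(G[V_i]).
\]
I would begin by invoking Observation~\ref{renaming-and-path-decompositions} to obtain that $(P, \{\varphi(V_j) : j \in P\})$ is a path-decomposition of $\varphi(G)$, and it is immediate from $|\varphi(V_j)| = |V_j|$ and $|\varphi(V_j)\cap\varphi(V_{j+1})| = |V_j \cap V_{j+1}|$ (since $\varphi$ is a bijection) that smoothness is preserved.

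Next, I would show that the $i$th associated graph in the renamed decomposition is exactly $\varphi(H)$. The key observation is that, because $\varphi$ is a bijection of vertex sets extended to edges via $\varphi(xy) = \varphi(x)\varphi(y)$, it commutes with the operations of taking induced subgraphs and deleting edge sets. Thus
\[
\varphi(G)\left[\bigcup_{1 \leq j \leq i} \varphi(V_j)\right] - E\!\left(\varphi(G)[\varphi(V_i)]\right)
= \varphi\!\left(G\left[\bigcup_{1 \leq j \leq i} V_j\right]\right) - \varphi\!\left(E(G[V_i])\right) = \varphi(G_i) = \varphi(H),
\]
and the $i$th bag is $\varphi(V_i) = \varphi(U)$. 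Together with the observation that $(P, \{\varphi(V_j) : j \in P\})$ is a smooth path-decomposition of $\varphi(G)$, this shows that $\varphi(G)$ contains $(\varphi(U), \varphi(H))$, as required.

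There is no real obstacle here: the statement is a direct bookkeeping consequence of Observation~\ref{renaming-and-path-decompositions} plus the fact that isomorphisms commute with the $G_i$-construction. The only thing to be slightly careful about is making sure that the definition of containment refers to the smooth decomposition and that smoothness is transported across $\varphi$, which follows trivially from $\varphi$ being a bijection.
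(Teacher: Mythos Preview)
Your proof is correct and is precisely the argument the paper has in mind: the observation is stated there without proof, introduced merely as an extension of Observation~\ref{renaming-and-path-decompositions} to pairs, and your write-up spells out exactly that extension. There is nothing to add.
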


\begin{theorem}
	\label{path-width-algorithm-correct}
	The answer returned by Algorithm~\ref{alg:testing-property-pi-version-1} is correct.
\end{theorem}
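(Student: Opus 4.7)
The plan is to prove the two implications corresponding to the two possible outputs of the algorithm: (A) if it returns a graph $G$, then $G \in \Gpwk \setminus \super{\calU}$, and (B) if it returns \none, then $\Gpwk \subseteq \super{\calU}$.

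For (A) the returned graph is of the form $H + E'$, and the \textbf{if}-check immediately before the \textbf{return} already guarantees $H + E' \in \calG \setminus \super{\calU}$; only the bound $\pw(H + E') \leq k$ needs argument. I would prove by induction over the order in which pairs enter $Q$ the invariant: every pair $(U, H)$ ever inserted into the queue is such that $H$ admits a smooth path-decomposition of width $k$ whose last bag equals $U$. The base case is the one-bag decomposition of $(V_1, \emptyset)$. The inductive step appends the successor bag $U'$ (which differs from $U$ in exactly one vertex, as required for smoothness) to the decomposition of $H$, producing a smooth width-$k$ path-decomposition of $H'$ ending at $U'$. The bound on $H + E'$ then follows immediately because all edges of $E'$ lie inside the last bag $U$.

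For (B) I would maintain throughout the \textbf{while} loop the invariant: every graph $G \in \Gpwk \setminus \super{\calU}$ contains, after renaming its vertices according to \Cref{renaming-and-path-decompositions}, some pair currently in $Q$. An empty $Q$ then forces $\Gpwk \setminus \super{\calU} = \emptyset$. The base case uses the assumption that $|V(G)| \geq k+1$, so $G$ has a smooth width-$k$ path-decomposition, which can be renamed so that its first bag equals $V_1$; then $G$ contains $(V_1, (V_1, \emptyset))$. The conditional initial pop is also safe: if $(V_1, \emptyset) \in \super{\calU}$ then every graph in $\Gpwk$ already belongs to $\super{\calU}$. For the maintenance step I fix a bad graph $G$ containing the popped pair $(U, H)$ through a smooth path-decomposition $(P, \calV)$ with $V_i = U$ and $G_i = H$. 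If $i = n'$ then $G = H + E'$ for $E' = E(G[U])$, so the first inner loop would have returned $G$, contradicting that we reach the successor-generation loop. Otherwise let $u$ be the vertex leaving $V_i$, rename the entering vertex to $v_{i+1}$ by another use of \Cref{renaming-and-path-decompositions}, and let $Y$ be the set of new neighbours of $u$; then by \Cref{pairs-contained-and-renaming} the twice-renamed $G$ contains precisely the successor pair $(U', H')$ generated by the algorithm for this choice of $u$ and $Y$. Either $(U', H')$ is appended to $Q$ (invariant restored), or the guard in the inner \textbf{if} rejects it: in that case either $H' \in \super{\calU}$ (forcing $G \in \super{\calU}$, a contradiction) or no graph in $\calG$ contains $(U', H')$ (contradicted by $G$ itself).

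The main obstacle is the careful bookkeeping around the algorithm's fixed vertex names $u_1, \dots, u_k, v_1, v_2, \dots$ versus the inherently up-to-isomorphism nature of pair containment. \Cref{renaming-and-path-decompositions} and \Cref{pairs-contained-and-renaming} are the right tools, but one must compose two successive renamings consistently—the one that realises $G$ as containing the parent pair $(U, H)$ and the subsequent adjustment so that the entering vertex of the next bag matches the canonical label $v_{i+1}$—before extracting the successor pair. A secondary subtlety is to confirm that the heuristic guard skipping some successor pairs is harmless for the invariant, which, as outlined above, follows at once from examining its two disjuncts.
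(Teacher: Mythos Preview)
Your proposal is correct and follows essentially the same approach as the paper: both build the smooth width-$k$ path-decomposition of queued pairs inductively to handle the counterexample output, and both maintain an invariant across iterations of the \textbf{while} loop by case-splitting on whether $U$ is the last bag and invoking \Cref{renaming-and-path-decompositions} and \Cref{pairs-contained-and-renaming} to rename the entering vertex to $v_{i+1}$. The only cosmetic difference is that you phrase the invariant forward (``every bad graph contains a queued pair'') while the paper uses the contrapositive (``if all queued pairs are good then there are no bad graphs''); the maintenance arguments are identical.
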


\begin{proof}
	We first prove that $V(H) = \{u_1,\ldots,u_k,v_1,\ldots,v_{|H|-k}\}$ for every pair $(U,H)$ in the queue. 
	Furthermore, we define the \emph{path-decomposition of $(U,H)$} for such pairs, which is a smooth path-decomposition $(P,\calV)$ of $H$ of width~$k$ with last bag $U$.
	For $(V_1,(V_1,\emptyset))$ the claim on the vertex set holds and we use a path of length~0 with bag $V_1$ as our decomposition.
	Now let $(U',H')$ be a pair added in the iteration in which $(U,H)$ was removed. 
	As $V(H') = V(H) \cup\{v_{i+1}\}$ for $i=|H|-k$ and $v_{i+1}\notin V(H)$, we get $|H'| = |H| + 1$ and $V(H')$ satisfies the claim.
	To obtain the decomposition of $(U',H')$, we extend the one of $(U,H)$ by adding an additional vertex to the end of the path-with bag $U'$.
	This decomposition has width~$k$ and is smooth as $|U'| = k + 1$ and $U$, $U'$ differ in exactly one vertex.
	
	Next, we prove that the results returned are correct.
	If the algorithm returns a graph~$H+E'$ in Line~\ref{alg:return-counterexample}, then $H+E'\in\calG\setminus \super{\calU}$.
	It has path-width at most~$k$, since the path-decomposition of $(U,H)$ has width~$k$ with last bag $U$, which is also a path-decomposition for~$H+E'$.
	Therefore, it suffices to check that $\Gpwk \subseteq \super{\calU}$ in case the algorithm returns \none.
	
	We verify this by inductively proving the following invariant:
	Before any iteration of the while loop it holds that, if all pairs in the queue are good, then $\Gpwk \subseteq \super{\calU}$.
	We have already argued that this holds for the initialisation of~$Q$.
	If, before the first iteration, the single pair in $Q$ is removed, then this pair is good and the invariant holds.
	So it is true before the first iteration of the while loop.
	
	Now assume the invariant holds up to iteration $l$ and regard the queue before iteration~$l+1$.
	In iteration $l$ only a single element $(U,H)$ was removed from $Q$.
	Consequently, it suffices to prove that $(U,H)$ is good if all newly added pairs are.
	To verify this, let $G \in \Gpwk$ be a graph containing $(U,H)$ with corresponding decomposition $(P,\calV)$.
	First assume that $U$ is the last bag of this decomposition, that is, $G = H + E'$ for some $E'\subseteq \{xy\colon x,y\in U, x\neq y\}$.
	The algorithm regards this graph in some iteration of the for loop in Line~\ref{alg:testing-property-pi-version-1-first-for}.
	Since it does not terminate in this iteration and $G \in \calG$, we have $G \in \super{\calU}$.
	
	Now assume that the path-decomposition of $G$ does not end with the bag~$U$.
	Let $U'$ be the subsequent bag.	
	We know that, for $i = |H| - k$, $V(H) = \{u_1,\ldots,u_k,v_1,\ldots,v_i\}$.
	Thus $v_{i+1}$ is not in $V(H)$ and we can assume that the element that enters $U'$ is $v_{i+1}$ by \Cref{pairs-contained-and-renaming}.
	(Simply choose a mapping $\varphi$ that is the identity restricted to $H$ and maps the vertex that joins the bag $U'$ to $v_{i+1}$.)
	Denote the vertex leaving $U$ by $u$, then the graph $H'$ associated with the bag $U'$ has the form $H+E'+v_{i+1}$ where $E'\subseteq\{uy\colon y\in U\setminus\{u\}\}$ is the set of edges between $u$ and its new neighbours.
	The algorithm regards the set $U' = U\setminus\{u\}\cup\{v_{i+1}\}$ in the for loop in Line~\ref{alg:testing-property-pi-version-1-second-for} and also looks at the graph $H'$ in Line~\ref{alg:testing-property-pi-version-1-third-for}.
	Since $G \in \Gpwk$, it is an element of \calG{} that contains $(U',H')$.
	If $H' \in \super{\calU}$, then $G \in \super{\calU}$ and if it is not, then $(U',H')$ is added to the queue.
	In this case, we have assumed it is a good pair and $G$ contains it, meaning that $G$ is in $\super{\calU}$ once more.
	This completes the proof.
\end{proof}

The algorithm remains correct if we remove the check whether there exists a graph $G\in\calG$ containing $(U',H')$ in Line~\ref{alg:heuristic}.
It is used to reduce the amount of pairs added to the queue.
Should this condition be hard to check, it can be omitted.
Heuristics may be used instead as long as they are correct in case they return a negative answer.
For example, if \calG{} is the class of cubic graphs, we can check whether $H'$ is subcubic.

\medskip
\noindent
\textbf{Isomorphism rejection.}
Now that we have proven the base algorithm to be correct, we demonstrate how to drastically improve the run-time by exploiting isomorphism rejection, which
reduces the amount of pairs added in each iteration.
\begin{lemma}
	\label{good-pairs-and-renaming}
	For a bijection $\varphi$ with domain $V(H)$, the pair $(U,H)$ is good if and only if the pair $(\varphi(U),\varphi(H))$ is good.
\end{lemma}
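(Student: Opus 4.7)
The plan is to exploit symmetry and reduce to Observation~\ref{pairs-contained-and-renaming}. Since $\varphi$ is a bijection, $\varphi^{-1}$ is also a bijection (on the appropriate domain), so it suffices to prove one implication: assume $(U,H)$ is good and show that $(\varphi(U),\varphi(H))$ is good. The reverse implication then follows by applying the same argument to $\varphi^{-1}$ and the pair $(\varphi(U),\varphi(H))$.

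To prove this direction, take any $G' \in \Gpwk$ containing $(\varphi(U),\varphi(H))$; the goal is to show $G' \in \super{\calU}$. The key idea is to pull $G'$ back via $\varphi^{-1}$, so that the pair inside it becomes $(U,H)$, to which the assumption applies. Concretely, I would define a bijection $\psi$ with domain $V(G')$ that agrees with $\varphi^{-1}$ on $\varphi(V(H)) = V(\varphi(H))$ and sends the remaining vertices of $V(G')$ bijectively to any set of fresh labels disjoint from $V(H)$. Then $\psi(\varphi(U)) = U$ and $\psi(\varphi(H)) = H$, so Observation~\ref{pairs-contained-and-renaming} applied to $G'$ and the bijection $\psi$ yields that $\psi(G')$ contains the pair $(U,H)$.

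It remains to conclude $G' \in \super{\calU}$. Since $\psi$ is a bijection, $\psi(G') \cong G'$, and therefore $\psi(G') \in \Gpwk$ (both $\calG$ and the path-width condition are isomorphism-invariant). By the assumption that $(U,H)$ is good, we get $\psi(G') \in \super{\calU}$, and because $\super{\calU}$ is defined up to isomorphism, $G' \in \super{\calU}$ as well. There is no genuine obstacle here; the only subtlety to be careful about is that Observation~\ref{pairs-contained-and-renaming} requires a bijection whose domain is \emph{all} of $V(G')$, which is why $\varphi^{-1}$ has to be extended to $\psi$ before it can be applied.
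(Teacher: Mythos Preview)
Your proposal is correct and follows essentially the same approach as the paper: extend the given bijection to the full vertex set of the containing graph, invoke Observation~\ref{pairs-contained-and-renaming}, and use that $\super{\calU}$ and $\Gpwk$ are closed under isomorphism. The only cosmetic differences are that the paper proves the other implication first and extends $\varphi$ by the identity on $V(G)\setminus V(H)$ rather than choosing fresh labels; your care in ensuring the extended map is actually a bijection is, if anything, slightly more precise.
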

\begin{proof}
	Let $(\varphi(U),\varphi(H))$ be a good pair and $G$ be a graph containing $(U,H)$.
	Let $\overline{\varphi}$ be the extension of $\varphi$ to $V(G)$, where $\overline{\varphi}(v) = v$ for all $v\in V(G)\setminus V(H)$.
	By \Cref{pairs-contained-and-renaming} the graph $\overline{\varphi}(G)$ contains the pair $(\varphi(U),\varphi(H))$ which is good.
	Hence $\overline{\varphi}(G) \in \super{\calU}$ and $\overline{\varphi}(G) \cong G$, which implies $G\in \super{\calU}$.
	This shows that $(U,H)$ is good.
	The missing direction follows by regarding $\inv{\varphi}$.
\end{proof}

As a consequence of \Cref{good-pairs-and-renaming} we can improve our base algorithm:
we only need to add pairs $(U',H')$ to $Q$ for which the queue does not already have an element of the form $(\varphi(U'),\varphi(H'))$ for some $\varphi\in\Aut(U,H)$.

What we describe now are special cases of \Cref{good-pairs-and-renaming} that can be checked before Line~\ref{alg:heuristic}.
Assume we are in the iteration in which the pair $(U,H)$ is removed from the queue.
We denote the set of automorphisms of $H$ that fix $U$ by $\Aut(U,H)$.
For convenience, the automorphisms $\varphi$ in $\Aut(U,H)$ are called \emph{$(U,H)$-maps} and we note that $\Aut(U,H)$ is a subgroup of the automorphism group of $G$.
We use these maps to eliminate certain pairs without needing to regard them.
To facilitate this, the set $\Aut(U,H)$ is computed directly after the pair $(U,H)$ is removed.

Our goal is to optimise all three for loops in Lines~\ref{alg:testing-property-pi-version-1-first-for}, \ref{alg:testing-property-pi-version-1-second-for}, and~\ref{alg:testing-property-pi-version-1-third-for} by reducing the amount of potential counterexamples, candidates for subsequent bags, and associated graphs regarded.
See \Cref{alg:testing-property-pi-version-2} for the pseudocode of the algorithm with these additions.

First, we improve the for loop in Line~\ref{alg:testing-property-pi-version-1-first-for} in which the algorithm looks for counterexamples.
If we have checked an edge set $E'\subseteq\{xy \colon x,y\in U, x\neq y\}$, then we do not need to check the sets $\varphi(E')$ for $\varphi\in\Aut(U,H)$.
This holds as $H+E'\cong\varphi(H+E') = H+\varphi(E')$.

Secondly, we reduce the amount of candidates for the next bag in Line~\ref{alg:testing-property-pi-version-1-second-for}.
By using $(U,H)$-maps, we can reduce the amount of pairs that are regarded.
Let $\varphi\in\Aut(U,H)$ with $\varphi(v) = u$ for some $u$ and $v$ in $U$ and let $\overline{\varphi}$ be the extension of $\varphi$ to $V(H)\cup\{v_{i+1}\}$ where $\overline{\varphi}(v_{i+1}) = v_{i+1}$.
We set $U' = U\setminus\{u\}\cup\{v_{i+1}\}$ and $U'' = U\setminus\{v\}\cup\{v_{i+1}\}$.
By \Cref{good-pairs-and-renaming} we know that if the pair $(U'',H'')$ is good if and only if $(\overline{\varphi}(U''),\overline{\varphi}(H''))$ is, where $\overline{\varphi}(U'') = U'$.
Hence, if for all $Y'\subseteq U\setminus\{u\}$ the pair $(U',H')$ with $H' = H + v_{i+1} + \{uy\colon y\in Y'\}$ is good, then the same holds for the pairs $(U'',H'')$ where $H'' = H + v_{i+1} + \{vy\colon y\in Y''\}$ with $Y''\subseteq U\setminus\{v\}$.
To see this, let $H''$ be of the form above, then $\overline{\varphi}(H'') = \overline{\varphi}(H) + \overline{\varphi}(v_{i+1}) + \overline{\varphi}(\{vy\colon y\in Y''\}) = H + v_{i+1} + \{uy\colon y\in\overline{\varphi}(Y'')\}$.
But as $\overline{\varphi}(Y'')\subseteq \overline{\varphi}(U\setminus\{v\}) = U\setminus\{u\}$, the graph $\overline{\varphi}(H'')$ is a candidate for $H'$ and this pair is good by assumption.

This means that we only need to regard the case where $u$ leaves, so the pairs $(U',H')$ above, and can disregard the pairs with $U''$ altogether.
In other words, it suffices to regard one vertex from each orbit (where the group $\Aut(U,H)$ acts on $V(H)$).
We may thus replace the for loop iterating over all $u\in U$ by one that iterates over the set $\overline{U}\coloneqq\{u^{\Aut(U,H)}\colon u\in U\}$.
(Notice that elements of $\Aut(U,H)$ map elements of $U$ to $U$, so any representative of this orbit can be chosen.)

Thirdly, we reduce the amount of vertex sets that are checked in Line~\ref{alg:testing-property-pi-version-1-third-for}.
Assume the next bag is $U'=U\setminus\{u\}\cup\{v_{i+1}\}$. 
We do not need to add a pair $(U',H')$ for a set $Y'$ if we have already added the pair $(U',H'')$ for a set $Y''$ and there exists an $(H,U)$-map $\varphi$ with $\varphi(Y') = Y''$ and $\varphi(u) = u$.
To see this, observe that the pair $(U',H')$ is good if and only if the pair $(\overline{\varphi}(U'),\overline{\varphi}(H'))$ is, where $\overline{\varphi}$ is, again, the extension of $\varphi$ to $v_{i+1}$ with $\overline{\varphi}(v_{i+1}) = v_{i+1}$.
By assumption we have that $\overline{\varphi}(U') = U'$.
Denote the set $\{uy' \colon y'\in Y'\}$ by $E'$ and $\{uy'' \colon y''\in Y''\}$ by $E''$.
But, since $\varphi(H'-E') = \varphi(H + v_{i+1}) = H+v_{i+1} = H''-E''$ and $\varphi(E') = E''$, we get that $\varphi(H') = H''$.
Consequently, we may ignore all sets $\varphi(Y')$, for any map $\varphi$ in the stabiliser $\Aut(U,H)_u=\{\varphi\in\Aut(U,H)\colon \varphi(u)=u\}$, upon adding the pair $(U',H')$ in Line~\ref{alg:testing-property-pi-version-1-third-for}.

\begin{remark}
	Since $|\Aut(U,H)| = |\Aut(U,H)_u|\cdot|u^{\Aut(U,H)}|$ holds by the orbit-stabiliser theorem we know that, assuming a reasonable size of $\Aut(U,H)$, either the orbit or the the amount of graphs that can be eliminated in Line~\ref{alg:testing-property-pi-version-1-third-for} is large.
\end{remark}

\begin{algorithm}[!ht]
	\SetKwInOut{Procedure}{procedure}
	\LinesNumbered
	\caption{An optimised version of Algorithm~\ref{alg:testing-property-pi-version-1} that breaks symmetries.}
	\label{alg:testing-property-pi-version-2}
	\KwIn{A class of graphs $\calG$ with a membership test, a finite set of graphs $\calU$, and a path-width value $k$.}
	\KwOut{An element of $\Gpwk\setminus \super{\calU}$ or \none{} if no such graph exists.}
	\Procedure{Test-Unavoidable-Structures($\calG, \calU, k$)}
	$V_1 \coloneqq \{u_1,\ldots,u_k,v_1\}$\;
	$Q \coloneqq [(V_1,(V_1,\emptyset))]$\;
	\lIf{$(V_1,\emptyset)$ \textup{is good}}
	{
		$Q$.pop()\hspace{-3pt}
	}
	\While{$Q$ \textup{is not empty}}
	{
		$(U,H) \coloneqq Q\text{.pop()}$\;
		Determine $\Aut(U,H)$\;
		$F\coloneqq \{xy \colon x,y\in U, x\neq y\}$ and $\calF\coloneqq2^F$\;
		\label{alg:testing-property-pi-version-2-end-dec}
		\While{$\textup{\calF{} is not empty}$}
		{\label{alg:testing-property-pi-version-2-first-for}
			Remove a set $E'$ from \calF\;
			\uIf{\textup{$H + E'\in\calG\setminus\super{\calU}$}}
			{
				\Return $H + E'$\;
			}
			\Else
			{
				$\calF \coloneqq \calF\setminus\{\varphi(E')\colon \varphi\in\Aut(U,H)\}$\;
			}
		}
		$i=|H|-l$\;
		Let $\overline{U}\subseteq U$ contain a vertex from every orbit\;
		\ForEach{$u\in \overline{U}$}
		{\label{alg:testing-property-pi-version-2-second-for}
			$U' \coloneqq U\setminus \{u\} \cup \{v_{i+1}\}$\;
			$\calY \coloneqq 2^{U\setminus\{u\}}$\;
			\label{alg:testing-property-pi-version-2-poss-nbs}
			\While{$\calY$ \textup{is not empty}}
			{\label{alg:testing-property-pi-version-2-third-for}
				Remove a set $Y$ from \calY\;
				$H' \coloneqq H + v_{i+1} + \{uy\colon y\in Y\}$\;
				\If{\textup{$H'\notin\super{\calU}$ \textbf{and} there exists a} $G\in\calG$ \textup{containing} $(U',H')$ \textup{\textbf{and}\\there is no bijection} $\varphi$ \textup{with} $(\varphi(U'),\varphi(H'))\in Q$}
				{\label{alg:testing-property-pi-version-2-last-symmetry}
					$Q$.append($(U',H')$)\;
				}
				$\calY\coloneqq\calY\setminus\{\varphi(Y) \colon \varphi\in\Aut(U,H), \varphi(u) = u\}$\;
			}
		}
	}
	\Return \none\;
\end{algorithm}

\medskip
\noindent
\textbf{Tailoring the algorithm to cubic graphs.}
Aside from making use of general properties of cubic graphs, such as that $|V(G)|$ is even and $|E(G)|=\tfrac{3}{2}|V(G)|$, hdf decompositions are immensely helpful in speeding up the algorithm. More precisely, when regarding a pair $(U,H)$, \Cref{degree-2-vertices-leave} lets us choose the vertex to leave $U$ if $H$ has a vertex of degree at least~2, eliminating the need to iterate over $\overline{U}$ entirely.

\begin{theorem}
	\label{reducing-barU-with-degree-2-vertices}
	Instead of iterating over the set $\overline{U}\coloneqq\{u^{\Aut(U,H)}\colon u\in U\}$, it suffices to choose a single vertex of degree at least~2, if such a vertex is present, in the case that \calG{} contains only cubic graphs.
\end{theorem}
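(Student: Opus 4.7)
The plan is to revisit the inductive step of the correctness proof in \Cref{path-width-algorithm-correct} and show that, whenever $U$ contains a vertex $v$ with $\degree[H]{v}\geq 2$, restricting the successor pairs added to the queue to those in which $v$ leaves $U$ still preserves the key invariant: if every added successor is good, then $(U,H)$ is good. Concretely, the goal reduces to showing that for every cubic $G\in\Gpwk$ containing $(U,H)$, some isomorphic copy of $G$ contains a successor pair of $(U,H)$ in which $v$ is the leaving vertex; since $\super{\calU}$ is isomorphism-closed, this forces $G\in\super{\calU}$.

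The central tool will be \Cref{degree-2-vertices-leave}. Starting from a smooth decomposition $(P,\calV)$ of $G$ with $V_i=U$ and $G_i=H$, the assumptions that $G$ is cubic and $\degree[G_i]{v}\geq 2$ yield $|\NX[G]{v}\setminus\NX[G_i]{v}|\leq 1$, so the lemma supplies a smooth decomposition $(P,\calW)$ of $G$ with $W_j=V_j$ for $j<i$, the same vertex leaving $W_{i-1}$ as leaves $V_{i-1}$, and $v$ leaving $W_i$. These conditions force $V_i$ and $W_i$ to share the $k$ vertices $V_{i-1}\setminus\{x\}$ (with $x$ the vertex leaving $V_{i-1}$) and to each contribute one additional vertex—call these $y$ and $w$, respectively. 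In the case treated via \Cref{neighbours-present-can-leave} one obtains $y=w$ directly and hence $W_i=V_i=U$; in the complementary case $w$ is the unique neighbour of $v$ outside $V(G_i)$, so $y\neq w$ and $W_i\neq V_i$.

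Even in that harder case, the next step is to verify that the transposition $\varphi$ swapping $y$ and $w$ (and fixing everything else) is an isomorphism of the pair $(W_i,G_{W_i})$ with $(U,H)$. The identity $\varphi(W_i)=U$ is immediate from the description of $W_i$ above. For the graphs, I would observe that both $y$ in $G_i$ and $w$ in $G_{W_i}$ are isolated (since a vertex just entering a bag in a smooth decomposition is necessarily isolated in the corresponding associated graph), and that the edges of $G_i$ and $G_{W_i}$ on the common vertex set $V(G_{i-1})$ coincide—both are obtained from $G[V(G_{i-1})]$ by deleting exactly the edges of $G[V_{i-1}\setminus\{x\}]$.

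To conclude, extend $\varphi$ to a bijection $\widetilde\varphi$ on $V(G)$ by swapping $y$ and $w$ globally. Then $\widetilde\varphi(G)\cong G$ is cubic and of path-width at most $k$, and by \Cref{pairs-contained-and-renaming} its decomposition $\widetilde\varphi(\calW)$ contains $(U,H)$ as its $i$-th pair with $v$ leaving the $i$-th bag—note $v\neq y$ because $y$ is isolated in $H$ while $\degree[H]{v}\geq 2$, and $v\neq w$ because $v\in V(H)$ but $w\notin V(H)$. Hence $\widetilde\varphi(G)$ contains one of the successor pairs of $(U,H)$ added by the algorithm, which by the inductive hypothesis is good, so $\widetilde\varphi(G)\in\super{\calU}$ and therefore $G\in\super{\calU}$. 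The principal obstacle is exactly that \Cref{degree-2-vertices-leave} need not preserve the $i$-th bag when $v$ has a still-unseen neighbour; the resolution is to recognise the shifted pair $(W_i,G_{W_i})$ as isomorphic to $(U,H)$ and to relabel the whole graph so that this isomorphism becomes the identity at the level of pairs, at which point the algorithm's isomorphism-closed treatment of $\super{\calU}$ completes the reduction.
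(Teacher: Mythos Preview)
Your proposal is correct and follows the same blueprint as the paper: reduce to the invariant of \Cref{path-width-algorithm-correct}, invoke \Cref{degree-2-vertices-leave} to force the chosen degree-$\geq 2$ vertex to leave, and then relabel so that the resulting $i$-th pair is literally $(U,H)$. The paper's proof differs only cosmetically in that its bijection $\varphi$ also sends the vertex entering $W_{i+1}$ to $v_{i+1}$; you should either incorporate that into $\widetilde\varphi$ or invoke \Cref{good-pairs-and-renaming} once more, so that your final claim---that $\widetilde\varphi(G)$ literally contains one of the successor pairs the algorithm enqueues---is exact rather than only true up to a further renaming.
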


\begin{proof}
	To see that this holds, recall the proof of \Cref{path-width-algorithm-correct}.
	There we proved the invariant that $\Gpwk \subseteq \super{\calU}$ if all pairs in the queue are good.
	More precisely, we showed that in the iteration where we remove the pair $(U,H)$, this pair is good if all the newly added ones are.
	Consequently, we need to show that a pair $(U,H)$ is already good if we only add pairs of form $(U',H')$ for which $U' = U\setminus \{u\}\cup\{v_{i+1}\}$ for some vertex $u$ of degree at least~2 in $H$.
	(If $H$ has no such vertex, the algorithm remains unchanged.)
	
	In this situation, where $u$ and $U'$ are defined as above, let $G \in \Gpwk$ be a graph containing $(U,H)$ with corresponding path-decomposition $(P,\calV)$ and $U=V_i$.
	Assume, without loss of generality, that $v_j$ is the vertex entering bag~$j$ for $j\in\{2,\ldots,i+1\}$.
	We know that $G \in \super{\calU}$ if $U$ is the last bag the decomposition, so we may assume there is a subsequent bag $V_{i+1}$.
	By applying \Cref{degree-2-vertices-leave}, we get a smooth path-decomposition $(P,\calW)$ of $G$ such that $V_j = W_j$ for all $j<i$, $V_{i-1}\setminus V_i = W_{i-1}\setminus W_i$, and $u$ leaves $W_i$.
	Let $\varphi$ be a bijection on $V(G)$ with $\restr{\varphi}{V(G_{i-1})} = \text{id}_{V(G_{i-1})}$ that maps the unique vertices entering $W_i$ and $W_{i+1}$ to $v_i$ and $v_{i+1}$.
	This gives us that $\varphi(G)$ contains the pair $(\varphi(W_{i+1}),\varphi(H_{i+1}))$ where $H_{i+1}$ is the graph associated with $W_{i+1}$.
	Since the vertex leaving $V_{i-1}$ and $W_{i-1}$ coincide and $\varphi(u) = u$, we get that $\varphi(W_i) = U$ and $\varphi(W_{i+1}) = U'$.
	(Note that $\varphi(u) = u$ follows from the fact that $\degree[H]{u}>0$, which implies that it is not the vertex entering $V_i$.)
	
	The graph $H' = \varphi(H_{i+1})$ is of the form $H+v_{i+1}+\{uy\colon y\in Y\}$, where $Y\subseteq U\setminus\{u\}$.
	If $H' \in \super{\calU}$, then $G\in \super{\calU}$ and we may assume this is not the case.
	As the set $Y$ is in~$\calY$, it is regarded by the algorithm and added to the queue unless there is already an element $(\psi(U'),\psi(H'))$ contained in it.
	In either case, the pair $(U', H')$ is good by assumption and thus both $\varphi(G)$ and~$G$ are in $\super{\calU}$.
\end{proof}

\medskip
\noindent
\textbf{Unavoidable induced subgraphs and unavoidable minors.}
Our algorithm can easily be extended to a test for unavoidable induced subgraphs or for unavoidable minors.
We only need the following adaption of the definition of a \emph{good pair}.
A pair $(U,H)$ is good if it has a minor in $\calU$ (respectively if $H-U$ has a subgraph in $\calU$ since this subgraph is inevitably induced).

\section{Path-width and girth of cubic graphs}
\label{sec: pw-g-bounds}
In this section $G$ is a cubic graph of path-width~$k$ and girth $g$.
Our goal is to show that graphs with small path-width have low girth or, conversely, that large girth necessitates large path-width.
Consider \Cref{tab:tight-bounds}.
We determined all smallest graphs of path-width~$k$ and girth~$g$ for the tuples of values given in the table, where we made use of the complete list of small cubic graphs that can be found in~\cite{BCGM13} and checked the path-width using SageMath~\cite{sagemath}.
We prove in this section that the graphs in \Cref{tab:tight-bounds} maximise the possible girth amongst all cubic graphs of path-width~$k$.
Moreover, we prove that a cubic graph of path-width~$k$ is of girth at most $\frac{2}{3}k + \frac{10}{3}$.
\begin{table}[htbp]
	\centering
	\begin{tabular}{c|c|c}
		$k$& $g$ & smallest graphs of path-width $k$ and girth $g$ \\
		\hline
		3&4&$K_{3,3}$\\
		4&4&3-cube, twisted cube\\
		5&5&Petersen graph\\
		6&6&Heawood graph\\
		7&6&Pappus graph\\
		8&7&McGee graph\\
		9&8&Tutte Coxeter graph\\
		10&8& $G(10)$
	\end{tabular}
	\caption{All smallest graphs of path-width $k$ and girth $g$.
		There are two smallest graphs of treewidth~4 and girth~4. For all other pairs of path-width and girth listed above, the graph is unique. We refer to \Cref{fig:tight-bounds} for drawings of all graphs in the above list.
		\label{tab:tight-bounds}}
\end{table}

Recall that hdf path-decompositions allow us to assume that degree~2 or~3 vertices, if present, leave a bag.
These are useful when determining the structure of the graphs $G_i$ for the initial bags of the decomposition.
The \emph{extended graph} of $G$ is defined as
\begin{displaymath}
	G^\textup{ext}\coloneqq G + E_{k+1}.
\end{displaymath}
This lets us extend the path-decomposition of $G$ by letting the vertices in the last bag leave while adding the new degree~0 ones.
We call such a path-decomposition an \emph{extended decomposition of $G$}.
By starting with a hdf path-decomposition of~$G$, this can easily be made hdf as well.
This extension allows us to ensure that the decomposition has enough bags so that sufficiently many associated graphs exist.
For the remainder of this section, we denote $G^\ext$ by $H$ and assume that the path-decomposition $(P,\calV)$ is an extended decomposition of~$G$ that is also hdf.
We begin by describing the degree distribution in the associated graphs.
Set:
\begin{displaymath}
	d^{i}_j \coloneqq |\{v \in H^i\colon d_{H_i}(v)=j\}| \text{ for } j \in \{0,1,2,3\} \text{ and } i \in \{1,\ldots, n'\}.
\end{displaymath}
We write $t_i$ for the amount of non-trivial components of $H_i$, where non-trivial means not of order~1.
To determine the degrees, we proceed by induction on $i$.
We regard $H_{i+1}$, assuming that the claim holds for $H_i$ and $H_{i+1}$ is of the form $H_i + \EXY{v} + v_{i+1}$.
\begin{lemma}
	\label{acyclic-degrees}
	Let $i\in\{1,\ldots,n'\}$.
	If $H_i$ is a forest, then:
	\begin{enumerate}[(i)]
		\item $d^i_0 \geq 1$. \label{itm:d0-acyclic}
		\item $d^i_1 = i-1+2t_i$. \label{itm:d1-acyclic}
		\item at most one component of $H_i$ contains vertices of degree~2, no subpath of $H_i$ contains more than two degree~2 vertices, and $d^i_2 \leq 3$. \label{itm:d2-acyclic}
		\item $d^i_3 = i-1$. \label{itm:d3-acyclic}
	\end{enumerate}
\end{lemma}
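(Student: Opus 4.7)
The plan is to argue by induction on $i$. Because $H_j$ is a subgraph of $H_i$ for every $j \leq i$, the forest hypothesis on $H_i$ automatically carries down to $H_j$, so one may freely invoke the lemma for $H_{i-1}$ when handling $H_i$. The base case $i = 1$ is immediate: $H_1 = (V_1, \emptyset)$ consists of the $k + 1$ isolated vertices of $V_1$, giving $d^1_0 = k+1 \geq 1$, $d^1_1 = d^1_2 = d^1_3 = 0$ and $t_1 = 0$, satisfying all four clauses.

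For the inductive step, let $u$ denote the vertex leaving $V_{i-1}$. The driving observation is that in an hdf extended decomposition every vertex that has already left belongs to $V(G)$ (by construction of the extension), so it has its full degree $3$ in $H_{i-1}$. Together with (iv) of the induction hypothesis, $d^{i-1}_3 = i - 2$, this accounts for all degree-$3$ vertices, so $V_{i-1}$ contains none. The hdf rule therefore forces $d \coloneqq d_{H_{i-1}}(u) \in \{0, 1, 2\}$, and when $d \leq 1$ we furthermore have $d^{i-1}_2 = 0$, since otherwise hdf would have chosen a degree-$2$ vertex instead of $u$.

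I then split on $d$. In each case $u$ gains exactly $3 - d$ new neighbours $w_1, \dots, w_{3-d} \in V_{i-1} \setminus \{u\}$ and, because $H_i$ is a forest, these lie in pairwise distinct components of $H_{i-1}$, all distinct from $u$'s component. Every $w_j$ has degree $0$ or $1$ in $H_{i-1}$: for $d \leq 1$ by $d^{i-1}_2 = 0$, and for $d = 2$ because (iii) of the IH confines all degree-$2$ vertices to $u$'s component, so a degree-$2$ $w_j$ would close a cycle. Properties (i) and (iv) are now immediate: $v_i$ enters $H_i$ as an isolated vertex, and $u$ alone is promoted to degree $3$, giving $d^i_3 = (i - 2) + 1 = i - 1$. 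For (ii), letting $b$ count the $w_j$'s of degree $1$ in $H_{i-1}$, exactly $b$ pre-existing non-trivial components merge with $u$'s component (itself non-trivial iff $d \geq 1$) into a single non-trivial component of $H_i$, and balancing this change of $t_i$ against the shifts in $d_1$ caused by the $w_j$'s and the possible loss of $u$ from $d_1$ yields $d^i_1 = i - 1 + 2 t_i$.

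The main obstacle is (iii). The bound $d^i_2 \leq 3$ and the single-component location of the degree-$2$ vertices are routine case-checks, but the subpath condition requires care. I use the characterisation that three vertices of a tree lie on a common simple path if and only if their Steiner centre is one of the three. A triple of degree-$2$ vertices in $H_i$ can arise only in two subcases: either $d = 0$ with $b = 3$, in which case the triple is $\{w_1, w_2, w_3\}$ with each $w_j$ adjacent to $u$, so the Steiner centre is $u$, which lies outside the triple; or $d = 2$ with $d^{i-1}_2 = 3$ and the single new neighbour $w_1$ becoming degree $2$, in which case the triple is $\{\alpha, \beta, w_1\}$ where $\{u, \alpha, \beta\}$ were the degree-$2$ vertices of $H_{i-1}$. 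Since $w_1$ attaches to $u$ as a new leaf in $H_i$, the unique paths in $H_i$ between the pairs $\{\alpha, \beta\}$, $\{\alpha, w_1\}$ and $\{\beta, w_1\}$ still meet precisely at the Steiner centre $c$ of $\{u, \alpha, \beta\}$ in $H_{i-1}$, which by the IH lies outside $\{u, \alpha, \beta\}$ and hence, as a vertex of $H_{i-1}$ distinct from $u$, also outside $\{\alpha, \beta, w_1\}$.
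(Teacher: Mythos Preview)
Your proof is correct and follows essentially the same inductive scheme as the paper: set up the base case $H_1\cong E_{k+1}$, observe that the leaving vertex has degree at most~$2$ and that its new neighbours lie in distinct components of degree at most~$1$, and then track the degree counts through the transition. The two presentations differ only in bookkeeping. For (ii) the paper applies the handshaking lemma to the forest $H_{i}$ directly, obtaining $d_1^{i}=d_3^{i}+2t_{i}$ in one line, whereas you balance the changes in $d_1$ and $t$ case by case; both are fine. For (iii) the paper phrases the subpath condition via the existence of a vertex whose removal separates all degree-$2$ vertices, while you use the equivalent Steiner-centre (median) formulation; your identification of the two non-trivial cases ($d=0,\,b=3$ and $d=2,\,d_2^{i-1}=3$ with $w_1$ of degree~$1$) and the verification that the median stays outside the new triple are correct. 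One phrasing nit: in the second case $w_1$ has degree~$2$ in $H_i$, so ``attaches to $u$ as a new leaf'' is inaccurate; what you use, and what is true, is that $w_1$'s old component is joined to $u$'s component through the single edge $uw_1$, so every path from $\alpha$ or $\beta$ to $w_1$ passes through $u$ and hence through the old median~$c$.
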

\begin{proof}
	For $i=1$ the claim holds as $H_1 = G_1 \cong E_{k+1}$.
	Assume that the claim holds up to some index $i\geq 1$ and let $H_{i+1} = H_i + \EXY{v} + v_{i+1}$ be acyclic.
	We note that $\degree[H_{i+1}]{v_{i+1}} = 0$ and Property~\eqref{itm:d0-acyclic} follows.
	Furthermore, the vertices of degree~3 in $H_i$ are exactly those that have left prior bags, so $\degree[H_i]{v} < 3$.
	Since $H_{i+1}$ is acyclic, $v$ and all new neighbours of $v$ are in different components of $H_i$.
	Thus no new neighbour of $v$ has degree~2 in $H_i$, as otherwise $v$ does too, by hdf, and they lie in the same component of $H_i$ by Property~\eqref{itm:d2-acyclic}.
	So $d_3^{i+1} = d_3^i+1 = i$ and~\eqref{itm:d3-acyclic} holds.
	
	Next, we remark that the handshaking lemma yields the following equation, where $\kappa(H_{i+1})$ denotes the number of components of $H_{i+1}$:
	\begin{displaymath}
		d_1^{i+1} + 2d_2^{i+1} + 3d_3^{i+1} 
		= 2(|V(H_{i+1})| - \kappa(H_{i+1})) = 2d_1^{i+1} + 2d_2^{i+1} + 2d_3^{i+1} -2t_i.
	\end{displaymath}
	From this we deduce that $d_1^{i+1} = d_3^{i+1} + 2t_i$ and Property~\eqref{itm:d1-acyclic} is satisfied.
	
	This only leaves~\eqref{itm:d2-acyclic}.
	First assume $d_2^i>0$, in which case $\degree[H_i]{v} = 2$ by hdf and it has a unique new neighbour $u$.
	If $\degree[H_i]{u}=1$, then $\degree[H_{i+1}]{u} = 2$ but $u$ is part of the same component as the remaining degree~2 vertices of $H_i$.
	The condition that at most two degree~2 vertices lie on any path in $H_i$ is ensured by the existence of a vertex whose removal separates all of them.
	Such a vertex also separates the vertices of degree~2 in $H_{i+1}$ as $u$ is a neighbour of the prior degree~2 vertex $v$.
	If $u$ does not have degree~1, the claim follows directly.
	
	If $d_2^i=0$, then all degree~2 vertices of $H_{i+1}$ are new neighbours of $v$ that had degree~1 in $H_i$.
	Consequently, there are at most three such vertices, they all lie in the same component of $H_{i+1}$, and they are separated by $v$.
\end{proof}

\begin{lemma}
	\label{1-cycle-degrees}
	Let $i\in\{1,\ldots,n'\}$.
	If $H_i$ contains a unique cycle, then there exists a $j\in\{i, i+1\}$ such that $H_j$ contains a unique cycle and satisfies the following properties:
	\begin{enumerate}[(i)]
		\item $d^j_0 \geq 1$. \label{itm:d0-1-cycle}
		\item $d^j_1 = j-3+2t_j$. \label{itm:d1-1-cycle}
		\item at most one component of $H_j$ contains vertices of degree~2 and $d^j_2 \leq 3$. \label{itm:d2-1-cycle}
		\item $d^j_3 = j-1$. \label{itm:d3-1-cycle}
	\end{enumerate}
\end{lemma}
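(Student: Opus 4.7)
The plan is to prove the lemma by induction on $i$, adapting the argument of Lemma~\ref{acyclic-degrees} and using the flexibility that $j$ may be either $i$ or $i+1$ to absorb a one-step delay that arises when the first cycle appears. Two facts get used throughout: every degree-2 vertex of the current associated graph lies in the current bag, since vertices that have already left have all their edges and hence degree 3; and by hdf, the vertex leaving the current bag has maximum degree in that bag.

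For the base case let $i_0$ be the smallest index at which $H_{i_0}$ contains a cycle, and let $v\in V_{i_0-1}$ be the vertex that leaves, with $d_v = d_{H_{i_0-1}}(v)$. If no new neighbor of $v$ has degree 2 in $H_{i_0-1}$, then only $v$ rises to degree 3 in $H_{i_0}$, so $d_3^{i_0}=i_0-1$. Property~(i) is witnessed by the freshly added $v_{i_0}$, property~(ii) follows from the handshaking identity $d_1=d_3+2t-2$ valid for any graph with exactly one cycle, and property~(iii) is obtained by a short case analysis on $d_v\in\{0,1,2\}$ that uses hdf to force $d_2^{i_0-1}=0$ when $d_v\le 1$ and Lemma~\ref{acyclic-degrees}(iii) to locate the old degree-2 vertices when $d_v=2$; take $j=i_0$. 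Otherwise some new neighbor $u$ of $v$ has degree 2, which forces $u\in V_{i_0-1}$; hdf then excludes $d_v\le 1$, and $d_v=3$ produces no new edges and hence no cycle, so $d_v=2$, $u$ is the unique new neighbor of $v$, and Lemma~\ref{acyclic-degrees}(iii) places both in the same component. The edge $vu$ closes the cycle and promotes both $v$ and $u$ to degree 3, breaking property~(iv) at $H_{i_0}$. However, hdf now forces $u$ (still in $V_{i_0}$ with all edges present) to leave, yielding $H_{i_0+1}=H_{i_0}+v_{i_0+1}$; the addition of a single isolated vertex gives $t_{i_0+1}=t_{i_0}$, and a routine count confirms (i)--(iv) for $H_{i_0+1}$, so take $j=i_0+1$.

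For the inductive step at $i>i_0$ the unique cycle of $H_i$ is inherited from $H_{i-1}$, so the inductive hypothesis supplies $j'\in\{i-1,i\}$ at which (i)--(iv) hold. If $j'=i$ we are done; otherwise the properties hold at $H_{i-1}$, and property~(iv) forbids degree-3 vertices in $V_{i-1}$, so $d_v\le 2$. Since $H_i$ again has a unique cycle, every new neighbor of $v$ lies in a component of $H_{i-1}$ disjoint from $v$'s component and from the other new neighbors. An hdf-versus-component argument analogous to the one in Lemma~\ref{acyclic-degrees} (now invoking property~(iii) of the present lemma when $d_v=2$) rules out new neighbors of degree 2, so only $v$ becomes degree 3, $d_3^i=i-1$, and the remaining properties follow exactly as in the first subcase of the base step; take $j=i$.

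The main obstacle is the second subcase of the base step, where the cycle-closing edge promotes two vertices to degree 3 in the same step and temporarily breaks the invariant $d_3^{i_0}=i_0-1$; handling it requires using hdf to eject the surplus degree-3 vertex at the very next step, and then verifying that the resulting isolated-vertex addition restores all four counts, including the non-trivial-component count $t$.
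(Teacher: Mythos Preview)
Your proposal is correct and follows essentially the same approach as the paper's proof. The only difference is organisational: the paper runs its induction from the vacuous base case $i=1$ and, in the inductive step for $i+1$, splits into the cases ``$H_i$ acyclic'' (the first cycle has just appeared) and ``$H_i$ has a unique cycle'' (the genuine inductive case); you instead take $i_0$, the first index with a cycle, as the base case and treat $i>i_0$ as the inductive step, which simply relabels the same two cases. In both arguments the single delicate point---two degree-$2$ vertices being joined when the first cycle closes, temporarily forcing $d_3=i$---is resolved identically, by using hdf to eject the surplus degree-$3$ vertex at the next step and taking $j=i+1$.
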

\begin{proof}
	Property~\eqref{itm:d0-1-cycle} is satisfied because $v_{j}$ has degree~0 in $H_j$, for all $j$.
	The claim holds for $i=1$ since $H_1$ is always acyclic.
	Assume that it holds up to some $i\geq 1$ and let $H_{i+1} = H_i + \EXY{v} + v_{i+1}$ contain a unique cycle.
	The graph $H_i$ thus contains at most one cycle and we first assume it is acyclic, letting us apply \Cref{acyclic-degrees}.
	
	If $\degree[H_i]{v} = 2$, then the new neighbour $u$ of $v$ is in the same component as $v$ in $H_i$ and $t_{i+1} = t_i$.
	The claim holds for $j=i+1$ when $\degree[H_i]{u}=1$: here $d_3^j= j-1$, $d_1^j = j - 3 + 2t_j$, $d_2^j = d_2^i \leq 3$, and all degree~2 vertices are in the same component.
	This leaves the case that $\degree[H_i]{u}=2$, where $H_{i+1}$ does not satisfy the properties as it has $i+1$ vertices of degree~3.
	However, by hdf, $u$ is the next vertex to leave and $H_{i+2} = H_{i+1} + v_{i+2}$.
	Here, $H_{i+2}$ satisfies $d_3^{i+2} = i+1$, $d_1^{i+2} = i - 1 + 2t_i$, and $d_2^{i+2} = d_2^i - 2 \leq 1$, completing this case.
	
	We may now assume that $\degree[H_i]{v}\leq 1$ and $d_2^i=0$.
	As $H_{i+1}$ has a unique cycle, either one new neighbour is in the same component as $v$ in $H_i$ and the remaining ones are in different components or no new neighbour is in the same component as $v$ and exactly two of them share a component.
	In either case, $d_3^{i+1} = i$ and at most three degree~2 vertices are present in $H_{i+1}$, which share a component.
	For the degree~1 vertices, note that any new neighbour of $v$ has degree~0, resulting in a new vertex of degree~1, or it has degree~1, reducing their amount by one.
	However, every neighbour of degree~1 except one also decreases the amount of non-trivial trees, as there is a unique cycle.
	This shows Property~\eqref{itm:d1-1-cycle}.
	
	We are left with the case that $H_i$ is not acyclic, which means it contains a unique cycle and the induction hypothesis applies.
	This gives us that either $H_{i+1}$ satisfies the degree properties, and we are done, or $H_i$ does.
	We may assume the latter.
	The only new degree~3 vertex is $v$ since connecting two vertices of degree~2 would result in a new cycle.
	If $\degree[H_i]{v} = 2$, its neighbour has degree~0 or it has degree~1 and is in a different component.
	This increases the amount of degree~1 vertices by one or decreases their amount and the amount of non-trivial trees by one each.
	Otherwise, $\degree[H_i]{v} \leq 1$ and $H_i$ has no degree~2 vertices.
	The new neighbours now have degree~1 and are in different components of $H_i$ or degree~0 and the properties hold once more.
\end{proof}

\begin{observation}
	\label{occurrence-first-second-cycle}
	The graph $H_k$ is not a forest and $H_{k+2}$ has more than one cycle.
\end{observation}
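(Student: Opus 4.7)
The plan is a proof by contradiction driven entirely by degree counting through the two preceding lemmas. The essential inputs are that a smooth width-$k$ path-decomposition gives $|V(H_i)| = k + i$, and that any degree-$3$ vertex of $H_i$ lies in a non-trivial component, so $t_i \geq 1$ whenever $d_3^i \geq 1$.

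For the first claim I would suppose, toward a contradiction, that $H_k$ is a forest. Lemma~\ref{acyclic-degrees} then pins down $d_3^k = k - 1$ and $d_1^k = k - 1 + 2 t_k$ and imposes $d_0^k \geq 1$ and $d_2^k \leq 3$. Summing all $d_j^k$ against the identity $|V(H_k)| = 2k$ and solving for $d_0^k + d_2^k$ yields
\begin{displaymath}
d_0^k + d_2^k = 2 - 2 t_k.
\end{displaymath}
Since $k \geq 3$ forces $d_3^k \geq 2$, and each such vertex lives in a non-trivial component, one has $t_k \geq 1$; hence $d_0^k + d_2^k \leq 0$, contradicting $d_0^k \geq 1$.

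For the second claim I would again argue by contradiction, supposing $H_{k+2}$ has at most one cycle. If $H_{k+2}$ is a forest, the identical computation with $|V(H_{k+2})| = 2k + 2$ gives $d_0 + d_2 = -2 t \leq 0$, immediately contradicting $d_0 \geq 1$. Otherwise $H_{k+2}$ has a unique cycle, and Lemma~\ref{1-cycle-degrees} provides an index $j \in \{k+2, k+3\}$ at which $d_3^j = j-1$, $d_1^j = j - 3 + 2 t_j$, $d_2^j \leq 3$ and $d_0^j \geq 1$. The same rearrangement applied to $|V(H_j)| = k + j$ produces
\begin{displaymath}
d_0^j + d_2^j = (k - j + 4) - 2 t_j \in \{\,2 - 2 t_j,\; 1 - 2 t_j\,\},
\end{displaymath}
both of which are non-positive once $t_j \geq 1$, completing the contradiction.

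The main obstacle is the administrative step of checking that the hypothesis $t_j \geq 1$ really holds in every branch and that the bag of index $k+3$ is present when Lemma~\ref{1-cycle-degrees} moves $j$ one step forward; the former is immediate from $d_3^j \geq k+1 \geq 2$, and the latter is precisely what the extended nature of the path-decomposition was introduced to guarantee. The arithmetic itself is routine and needs no further cases.
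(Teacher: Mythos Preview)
Your proof is correct and follows essentially the same approach as the paper's: both argue by contradiction via degree counting using Lemmas~\ref{acyclic-degrees} and~\ref{1-cycle-degrees}, together with $|V(H_i)|=k+i$ and $t_i\ge 1$. The only cosmetic differences are that the paper drops $d_2$ to work with the inequality $|V(H_j)|\ge d_3^j+d_1^j+d_0^j$ whereas you solve for $d_0^j+d_2^j$ exactly, and the paper dismisses the forest case for $H_{k+2}$ in one clause (it follows from the first part, since $H_k\subseteq H_{k+2}$) while you redo the count.
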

\begin{proof}
	Suppose $H_k$ is acyclic.
	\Cref{acyclic-degrees} states that $2k = |H_k| \geq d_3^k + d_1^k + d_0^k \geq 2k - 1 + 2t_k$, which is a contradiction since $k\geq 3$.
	Similarly, if $H_{k+2}$ has at most one cycle, then it has exactly one and either $H_{k+2}$ or $H_{k+3}$ satisfy the degree properties of \Cref{1-cycle-degrees}.
	Using these we obtain $2k+2 = |H_{k+2}| \geq d_3^k + d_1^k + d_0^k \geq 2k + 2t_k + 1$ or $2k+3 = |H_{k+3}| \geq 2k + 2t_k + 3$, which is a contradiction in both cases.
\end{proof}

We are now ready to prove the following bound.
\genbound*
\begin{proof}
	To prove this result, we need to show that $g\leq \tfrac{2}{3}k + \tfrac{10}{3}$ for the graph $G$ of path-width~$k$ and girth $g$.
	Regard the extended graph $H$ of $G$.
	Let $l$ be the maximal index such that $H_l$ contains at most one cycle.
	By \Cref{occurrence-first-second-cycle} we have that $l\leq k+1$.
	Since $H_l$ contains either no cycle, and \Cref{acyclic-degrees} applies, or it contains exactly one, and \Cref{1-cycle-degrees} can be used, we get that $d_3^l = l-1$ and $d_2^l\leq 3$.
	(Note that $H_{l+1}$ contains multiple cycles, so \Cref{1-cycle-degrees} actually applies to $H_l$.)
	This lets us estimate the amount of vertices of degree at least~2 in $H_{l+1}$:
	if $d_2^l>0$, then at most one edge is added in the transition to $H_{l+1}$ by hdf.
	Since at least one end of such an edge has degree~2, we obtain $d_3^{l+1} + d_2^{l+1} \leq d_3^l + d_2^l +1$.
	On the other hand, if $d_2^l=0$, then at most one new degree~3 and three new degree~2 vertices are created.
	This yields $d_3^{l+1} + d_2^{l+1} \leq d_3^l + 4$.
	Combined these give us that
	\begin{displaymath}
		d_3^{l+1} + d_2^{l+1} \leq d_3^l + 4 = l + 3.
	\end{displaymath}
	Let $C$ and $C'$ be two distinct cycles in $H_{l+1}$.
	If they are disjoint, then $|C| + |C'| \leq l+3$, yielding
	\begin{displaymath}
		g \leq \frac{l + 3}{2} \leq \frac{k}{2} + 2.
	\end{displaymath}
	Otherwise, $C'$ contains a path $Q$ between two non-adjacent vertices of $C$.
	Using this path and $C$, we obtain two cycles of which one has length at most $\tfrac{|C|}{2} + \|Q\|$.
	We estimate $|Q| \leq d_3^{l+1} + d_2^{l+1} - |C| + 2 \leq l + 5 - |C|$ to get $g\leq |C|$ and $g\leq \tfrac{|C|}{2} + l + 4 - |C| = l+4-\tfrac{|C|}{2}$.
	Consequently,
	\begin{displaymath}
		g \leq \frac{2}{3}k + \frac{10}{3}
	\end{displaymath}
	since $|C| = l+4-\tfrac{|C|}{2}$ holds for $|C| = \tfrac{2}{3}l + \tfrac{8}{3}$.
	The bound from the disjoint cycle case is strictly better than this one and, hence, the result follows.
\end{proof}

In the following, we demonstrate that our bounds are tight for small values of~$k$.
In preparation, we make the following observations.
\begin{observation}
	\label{ass-graphs-neighbours}
	The neighbours of a vertex $v$ of degree at most~2 in $H_i$ are of degree~3.
\end{observation}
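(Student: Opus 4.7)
The key is to unpack the definition $H_i = H\bigl[\bigcup_{1\le j\le i} V_j\bigr] - E\bigl(H[V_i]\bigr)$: an edge appearing in $H_i$ is precisely one of the edges that has been committed by the leaving of some vertex, i.e.\ an edge with at least one endpoint outside $V_i$.

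The plan is as follows. Let $v$ be a vertex with $\deg_{H_i}(v)\le 2$ and let $u\in N_{H_i}(v)$. Then $uv$ is an edge of $H_i$, so by the definition of $H_i$ at least one of $u,v$ is not in $V_i$. Since $(P,\calV)$ is a path-decomposition, a vertex that has ever been in a bag $V_j$ with $j<i$ but is not in $V_i$ has already left, and for such a vertex all of its incident edges in $H$ are already present in $H_i$. In particular, if $v$ itself had already left, then $\deg_{H_i}(v)$ would equal $\deg_H(v)=3$ (using that $H=G+E_{k+1}$ and $v$ cannot be one of the isolated extension vertices, as those only enter at the very end and never leave), contradicting $\deg_{H_i}(v)\le 2$. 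Therefore it must be $u$ that has left, and hence $\deg_{H_i}(u)=\deg_H(u)=3$.

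The only mild subtlety is to rule out the case where $v$ is one of the $k+1$ extension vertices added via $E_{k+1}$: such vertices have no neighbours in $H$, so the hypothesis $u\in N_{H_i}(v)$ is vacuous and there is nothing to check. No obstacle of note arises; the statement is essentially a direct consequence of the definition of the associated graph together with $3$-regularity of $G$.
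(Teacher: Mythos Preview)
Your argument is correct and follows the same idea as the paper's proof: every edge of $H_i$ has at least one endpoint that has already left a bag, and such an endpoint has full degree~3 in $H_i$; since $v$ has degree at most~2, that endpoint must be $u$. The paper states this in two sentences without unpacking the role of the extension vertices, but your more detailed treatment (including the observation that extension vertices are irrelevant because they have no neighbours) is a faithful expansion of the same reasoning.
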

\begin{proof}
	Note that $H_i$ only contains edges incident to vertices that have left one of the first $i-1$ bags.
	Therefore, at least one end of any edge is of degree~3.
\end{proof}

\begin{observation}
	\label{ass-graphs-path-lengths-acyclic}
	If $H_i$ is a forest and $Q$ is a path in $H_i$, then
	\begin{displaymath}
		|Q| \leq i + 2 + \min\{d_2^i, 2\} - t_i.
	\end{displaymath}
	If an end of $Q$ has degree~2, then this bound improves by~2 to
	\begin{displaymath}
		|Q| \leq i + \min\{d_2^i, 2\} - t_i.
	\end{displaymath}
\end{observation}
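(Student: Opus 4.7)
The plan is to bound $|Q|$ by counting its vertices according to their degrees in $H_i$ and controlling each degree-class separately, using \Cref{acyclic-degrees} together with \Cref{ass-graphs-neighbours}. Since $H_i$ is a forest, $Q$ sits inside a single tree component $T$; I write $n_j(Q)$ for the number of vertices of $Q$ of degree $j$ in $H_i$, so that $|Q| = n_1(Q) + n_2(Q) + n_3(Q)$. Internal vertices of $Q$ have path-degree $2$ and hence $H_i$-degree at least $2$, so any degree-$1$ vertex on $Q$ must be an endpoint, giving $n_1(Q) \leq 2$; the bound $n_2(Q) \leq \min\{d_2^i, 2\}$ is read off directly from \Cref{acyclic-degrees}(iii).

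The crux is estimating $n_3(Q)$, and the key observation is that every non-trivial tree component of $H_i$ contains at least one degree-$3$ vertex. This is essentially immediate from \Cref{ass-graphs-neighbours}: a non-trivial component contains an edge, and every edge of $H_i$ has a degree-$3$ endpoint. Since the total number of degree-$3$ vertices in $H_i$ equals $d_3^i = i - 1$ by \Cref{acyclic-degrees}(iv), and each of the $t_i - 1$ non-trivial components distinct from $T$ consumes at least one of these, $T$ contains at most $(i-1) - (t_i-1) = i - t_i$ degree-$3$ vertices. Hence $n_3(Q) \leq i - t_i$, and combining with the previous two estimates yields the general bound.

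For the improved bound, suppose the endpoint $q_0$ of $Q$ has degree $2$ in $H_i$. By \Cref{ass-graphs-neighbours} both neighbors of $q_0$ have degree $3$; one of them is the next vertex $q_1$ on $Q$, and I denote the other by $w$. Since $T$ is a tree, $w$ cannot lie on $Q$---otherwise the subpath of $Q$ from $w$ to $q_0$ together with the edge $w q_0$ would close a cycle. Hence $Q' = w q_0 q_1 \dots q_m$ is a path in $T$ with $|Q'| = |Q| + 1$ and endpoint $w$ of degree $3$; rerunning the count for $Q'$ with the sharper estimate $n_1(Q') \leq 1$ (the endpoint $w$ is not of degree $1$) gives $|Q'| \leq 1 + \min\{d_2^i, 2\} + (i - t_i)$, and subtracting $1$ produces the desired refinement. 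The main obstacle, and what drives the whole argument, is securing that every non-trivial component contains a degree-$3$ vertex; everything else is elementary path-counting combined with the one-vertex extension trick in the improved case.
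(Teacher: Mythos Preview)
Your proof is correct and follows the same approach as the paper: decompose $|Q|$ by degree class, bound $n_1(Q)\le 2$, $n_2(Q)\le\min\{d_2^i,2\}$ via \Cref{acyclic-degrees}(iii), and $n_3(Q)\le i-t_i$ by noting (from \Cref{ass-graphs-neighbours}) that every non-trivial component contains a degree-$3$ vertex. For the refined bound the paper argues in place---the degree-$2$ endpoint loses the degree-$1$ slot and has a degree-$3$ neighbour off $Q$, so $n_3(Q)\le (i-1)-1-(t_i-1)$---whereas you extend $Q$ by that off-path neighbour $w$ and recount; this is the same observation packaged slightly differently and yields the identical bound.
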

\begin{proof}
	The path $Q$ has at most~2 vertices of degree~1, the remaining ones are of degree~2 or~3.
	By \Cref{acyclic-degrees}, $H_i$ has $d_3^i = i-1$ and at most two of degree~2 lie on $Q$.
	Also, any non-trivial component contains at least one degree~3 vertex by \Cref{ass-graphs-neighbours}.
	This gives us that $|Q| \leq 2 + \min\{d_2^i, 2\} + \left[d_3^i - (t_i - 1)\right]$, which is the first inequality.
	
	If one end, say $v$ has degree~2, then at most one vertex of degree~1 is in $Q$.
	Additionally, both neighbours of $v$ have degree~3 by \Cref{ass-graphs-neighbours} and at most one of them is on $Q$.
	Consequently $|Q| \leq 1 + \min\{d_2^i, 2\} + \left[d_3^i - 1 - (t_i - 1)\right]$, completing the proof.
\end{proof}

\begin{observation}
	\label{ass-graphs-new-neighbour-components}
	If $H_{i+1} = H_i + \EXY{v} + v_{i+1}$, $H_i$ is a forest, and $i\leq g-2$, then no new neighbour of $v$ is in the same component as~$v$ in~$H_i$.
\end{observation}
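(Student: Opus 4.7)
The plan is to argue by contradiction. Suppose some new neighbour $u$ of $v$ lies in the same component of $H_i$ as $v$. Since $H_i$ is a forest, this component contains a unique $u$--$v$ path $Q$. Adjoining the edge $uv$ (which lies in $H_{i+1}$ but not in $H_i$) produces a cycle $C$ of length $|Q|$ in $H_{i+1}$, and hence in the extended graph $H$. Because $H$ is obtained from $G$ by adding isolated vertices, it has the same girth $g$ as $G$, so $|Q| = \|C\| \geq g$. The strategy is now to derive a contradiction by bounding $|Q|$ from above using \Cref{ass-graphs-path-lengths-acyclic} together with the hypothesis $i \leq g-2$.

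Because $v$ has a new neighbour, $v$ must be a vertex of $G$ of degree $3$, so $\deg_{H_i}(v) \leq 2$. I would split on this degree. The case $\deg_{H_i}(v)=0$ is immediate: the component of $v$ is the singleton $\{v\}$, ruling out any $u\neq v$. The case $\deg_{H_i}(v)=2$ is handled by the \emph{second} (sharper) inequality of \Cref{ass-graphs-path-lengths-acyclic}, since then $v$ is an end of $Q$ of degree $2$: we get
\[|Q| \leq i + \min\{d_2^i,2\} - t_i \leq i + 2 - 1 = i+1,\]
using $t_i\geq 1$ because the component of $v$ is non-trivial.

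The slightly more subtle case is $\deg_{H_i}(v)=1$. Here the key observation is that, by hdf, no vertex of degree $2$ or $3$ lies in $V_i$ (otherwise such a vertex would have been preferred over $v$ to leave). But every vertex of degree $2$ in $H_i$ still has an uncovered incident edge of $G$, so by the continuity axiom of path-decompositions it must still belong to $V_i$. Combining these two facts forces $d_2^i=0$. The first bound of \Cref{ass-graphs-path-lengths-acyclic} now gives
\[|Q| \leq i + 2 + \min\{d_2^i,2\} - t_i = i + 2 - t_i \leq i + 1,\]
again because $t_i \geq 1$. In all non-trivial cases we thus obtain $|Q| \leq i + 1 \leq g-1$, contradicting $|Q|\geq g$.

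The only genuine subtlety in this argument is the $\deg_{H_i}(v)=1$ case, where one has to combine the hdf property with the observation that all degree-$2$ vertices of $H_i$ live in $V_i$ in order to conclude $d_2^i=0$; without this, the first bound of \Cref{ass-graphs-path-lengths-acyclic} would give only $|Q|\leq i+4-t_i$, which is too weak to reach $g-1$.
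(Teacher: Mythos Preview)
Your proof is correct and follows essentially the same approach as the paper. The only difference is organisational: the paper splits on whether $d_2^i=0$ or $d_2^i\geq 1$ (the latter forcing $\deg_{H_i}(v)=2$ by hdf), whereas you split on $\deg_{H_i}(v)$ and in the degree-$1$ case derive $d_2^i=0$ from hdf---these are contrapositives of the same implication, and both routes feed the appropriate inequality from \Cref{ass-graphs-path-lengths-acyclic} to obtain $|Q|\leq g-1$.
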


\begin{proof}
	Suppose that $u$ is a new neighbour of $v$ and in the same component of $H_i$ as~$v$.
	Let $Q$ be the path joining $v$ and $u$ in $H_i$.
	If $d^i_2=0$, then $|Q| \leq i + 2 - t_i \leq g-1$ by~\Cref{ass-graphs-path-lengths-acyclic}.
	If $d^i_2 \geq 1$, then $\deg_{H_i}(v) = 2$ by hdf.
	Again \Cref{ass-graphs-path-lengths-acyclic} yields $|Q| \leq i + \min\{d_2^i,2\} - t_i \leq g-1$.
	In both cases, the edge $vu$ causes a cycle of length  $g-1<g$, which is a contradiction.
\end{proof}

Next, we show that high girth necessitates many acyclic associated graphs.
\begin{lemma}
	\label{ass-graphs-initially-acyclic}
	For $i\leq g-2$, $H_i$ is a forest.
\end{lemma}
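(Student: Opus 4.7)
The plan is to prove the lemma by induction on $i$. The base case $i=1$ is immediate since $H_1$ has vertex set $V_1$ and no edges, so it is a forest. For the inductive step, suppose $H_i$ is a forest with $i+1 \leq g-2$, and let $v$ be the vertex leaving $V_i$, so that $H_{i+1} = H_i + \EXY{v} + v_{i+1}$. Because $H_i$ is a forest and $v_{i+1}$ is added as an isolated vertex, any cycle $C$ in $H_{i+1}$ must pass through $v$ and use exactly two of the edges newly added from $v$ to distinct new neighbours $u_1,u_2$ of $v$.

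Next I would split into two cases according to whether $d^i_2$ is zero. If $d^i_2 \geq 1$, every vertex of degree 2 in $H_i$ must lie in $V_i$: any original vertex of $G$ outside $V_i$ has already left and therefore has all three of its incident edges in $H_i$, hence degree~3. By the hdf property, $v$ is chosen so that $\degree[H_i]{v} \geq 2$, which leaves at most one new neighbour. Thus at most one new edge is added and, by \Cref{ass-graphs-new-neighbour-components} (applicable because $i \leq g-3 \leq g-2$ and $H_i$ is a forest), this new edge joins $v$ to a vertex in a different component of $H_i$, so no cycle is created.

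In the remaining case $d^i_2 = 0$, suppose for contradiction that $H_{i+1}$ contains a cycle $C$ through $v$, $u_1$ and $u_2$ as above. Deleting the two new edges from $C$ leaves a $u_1$-$u_2$ path $Q$ inside $H_i$ (it lies outside $v$'s component by \Cref{ass-graphs-new-neighbour-components}), and $C$ has length $|Q|+1$. Since all edges of $H_{i+1}$ come from $G$, the cycle $C$ is a cycle of $G$, giving $|Q| + 1 \geq g$. On the other hand, \Cref{ass-graphs-path-lengths-acyclic} yields
\[
|Q| \leq i + 2 + \min\{d^i_2,2\} - t_i = i + 2 - t_i \leq i + 1 \leq g - 2,
\]
which contradicts $|Q| \geq g-1$. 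Hence $H_{i+1}$ is a forest.

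The main obstacle I anticipate is the careful invocation of hdf in the $d^i_2 \geq 1$ case, namely justifying that a degree-2 vertex of $H_i$ necessarily lies in $V_i$ (and hence is a valid candidate to leave), forcing the vertex that does leave to have degree at least~2. Once this is established, the contradiction in the remaining case follows directly by combining \Cref{ass-graphs-new-neighbour-components} with the path-length bound of \Cref{ass-graphs-path-lengths-acyclic}.
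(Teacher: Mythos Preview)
Your proof is correct and follows essentially the same route as the paper: induction on $i$, with \Cref{ass-graphs-new-neighbour-components} disposing of the case where $v$ has at most one new neighbour and \Cref{ass-graphs-path-lengths-acyclic} bounding the path between two new neighbours otherwise. The paper splits according to $\deg_{H_i}(v)$ rather than $d^i_2$ (equivalent by hdf), takes for granted the point you flag as the main obstacle, and additionally verifies that $H_{i+1}$ actually exists; note also that your claim that any cycle uses \emph{two} new edges already needs \Cref{ass-graphs-new-neighbour-components} to rule out a cycle through one new and one old edge at $v$.
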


\begin{proof}
	For $i=1$ the claim holds since $H_1 \cong E_{k+1}$.
	Assume the claim holds for some $i\leq g-3$.
	If $i = 1$, then $|V(H)| > |V(G)| \geq k+1 = |V(H_1)|$ and, hence, $H_2$ exists.
	If $i \geq 2$, then $d^i_1 \geq 1$ by \Cref{acyclic-degrees} and since any vertex of $H$ is isolated or of degree~3, the graph $H_{i+1}$ exists.
	Let $v$ be the vertex that leaves~$V_i$.
	It holds that $H_{i+1} = H_i + \EXY{v} + v_{i+1}$.
	
	First note that \Cref{ass-graphs-new-neighbour-components} shows that no new neighbour of $v$ is part of the same component as~$v$ in $H_i$.
	In particular, if $\degree[H_i]{v} = 2$, then $H_{i+1}$ is acyclic.
	This leaves the case that $\degree[H_i]{v}<2$.
	Since the path-decomposition is hdf, we know that $d^{i}_2 = 0$ and there are multiple new neighbours of $v$.
	In case two of them are in the same component of $H_i$, we get a short cycle by noticing that the path between these neighbours has order at most $i + 2 - t_i \leq g-2$ by \Cref{ass-graphs-path-lengths-acyclic}, which is a contradiction.
	Thus $H_{i+1}$ is a forest.
\end{proof}

With these tools at hand, we can now prove the following:
\smallbound*
\begin{proof}
	The graphs found in \Cref{tab:tight-bounds} show that $\xi(k)$ takes at least the value specified above and it suffices to prove that it is not larger.
	To this end we show that $g \leq k+1$ in general, $g \leq k$ if $k\geq 4$, $g\leq k-1$ if $k\geq 7$, and $g\leq k-2$ if $k\geq 10$.
	This proves the equalities for the values specified above and shows that $\xi(k) \leq k-2$ for $k\geq 10$.
	
	We know that $H_1,\ldots,H_{g-2}$ are acyclic by \Cref{ass-graphs-initially-acyclic}.
	In this proof, we look at the possibilities that arise for the subsequent associated graphs, starting with $H_{g-1}$.
	Since $H_{g-2}$ is a forest, \Cref{acyclic-degrees} implies $d_3^{g-2} = g-3$, $d_1^{g-2} = g - 3 + 2t_{g-2}$, and $d_0^{g-2}\ge 1$.
	Therefore
	\begin{displaymath}
		1 \leq d_0^{g-2} + d_2^{g-2} = k + g - 2 - (g - 3) - (g - 3 + 2t_{g-2}) = k + 4 - g - 2t_{g-2}.
	\end{displaymath}
	Rearranging yields $g \leq k + 3 - 2t_{g-2} \leq k + 1$	proving the first of the four inequalities.
	
	For the remainder of this proof, we may assume $g\geq k-1$ and $k\geq 4$, $g\geq 5$.
	Using the former, we get $t_{g-2} \leq 2$ and $t_{g-2} = 1$ if $g\geq k$.
	
	Let $H_{g-1} = H_{g-2} + \EXY{v} + v_{g-1}$.
	Since $H_{g-2}$ has $g-3$ vertices of degree~3, $v$ is not one of them and it has new neighbours.
	By \Cref{ass-graphs-new-neighbour-components}, any new neighbour is in a component different from $v$.
	In particular, if $\degree[H_{g-2}]{v}=2$, then $H_{g-1}$ is acyclic.
	Assume that $v$ has degree at most~1 in $H_{g-2}$ which implies $d_2^{g-2} = 0$ by hdf.
	If all new neighbours of $v$ are in different components of $H_{g-2}$, then $H_{g-1}$ is acyclic.
	In analogy to the proof of \Cref{ass-graphs-initially-acyclic}, if two new neighbours are in the same component, then the path between them has order at most $g-t_{g-2}$ by \Cref{ass-graphs-path-lengths-acyclic}.
	This implies $t_{g-2} = 1$ and the path has length exactly $g-1$ and contains all degree~3 vertices.
	We conclude that $\degree[H_{g-2}]{v} = 0$ in this case and $H_{g-1}$ is acyclic if $\degree[H_{g-2}]{v} = 1$.
	
	If $v$ is isolated, then there either is a unique cycle of length~$g$ in $H_{g-1}$ or the third new neighbour of $v$ is also in the same component of $H_{g-2}$ as the other two.
	In the first case, $H_{g-1}$ has one non-trivial component and satisfies $d_3^{g-1} = g-2$, $d_2^{g-1} = 2$, and $d_1^{g-1} = g-2$.
	Otherwise, if $v$ has all three neighbours $u_1,u_2,u_3$ in the same component $T$, the paths $u_iTu_j$ are of order at least $g-1$ for $1\leq i<j\leq 3$.
	Consequently, all three of them contain all $g-3$ vertices of degree~3 in $H_{g-2}$ and differ only in their ends.
	Hence, $H_{g-1}$ contains a cycle of length at most~4 and $g\leq 4$, contradicting our assumption.
	
	This completes the inequality $g\leq k$ for $k\geq 4$:
	If $g=k+1$, then $t_{g-2}=1$ and $d_2^{g-2} = 0$, $d_0^{g-2} = 1$.
	By the above we get that $\degree[H_{g-2}]{v} = 1$ results in a small cycle as $v$ has a new neighbour in its component.
	If, otherwise, $\degree[H_{g-2}]{v} = 0$, then all three neighbours of $v$ in $H_{g-1}$ lie in the same component of $H_{g-1}$, which is a contradiction.
	
	The first part of this proof showed the bounds for $k \in \{3,4,5,6\}$ and described $H_{g-1}$, which we recall again below.
	From now on we may assume that $k,g\geq 7$ and need to verify the last two inequalities.
	Our next goal is to describe $H_{g} = H_{g-1} + \EXY{w} + v_{g}$.
	We saw that there are two options for $H_{g-1}$:
	\begin{align}
		H_{g-1} & \text{ is acyclic with } t_{g-1} = 1,\, d_3^{g-1} = g-2,\, \text{ and } d_1^{g-1} = g \text{ or} \label{option:g-1-acyclic}\\
		H_{g-1} & \text{ has a unique cycle}, t_{g-1} = 1,\, d_3^{g-1} =d_1^{g-1} = g-2, d_2^{g-1} = 2.\label{option:g-1-cycle}
	\end{align}
	Note that we used \Cref{acyclic-degrees} to obtain $t_{g-1}=1$ in Option~\eqref{option:g-1-acyclic}, which holds since $H_{g-1}$ has $k+g-1\leq 2g$ vertices in total and $d_0^{g-1}\geq 1$.
	The last inequality is also true for Option~\eqref{option:g-1-cycle} and all associated graphs in general, which is why we omit writing it each time.
	Furthermore, $d_0^{g-1}\leq 2$ in both cases, using the above cardinality argument.
	
	Let us start with the Option~\eqref{option:g-1-cycle}.
	We know here that $\degree[H_{g-1}]{w}=2$ and we denote its unique new neighbour by $u$.
	If $u$ has degree~0, then
	\begin{equation}
		\text{$H_g$ has a unique cycle, $t_g=1$, $d_3^g= d_1^g  = g-1$, $d_2^g = 1$, and $1\leq d_0^g\leq 2$}.\label{option:g-cycle-1}
	\end{equation}
	Otherwise, $u$ lies in the same component as $w$ and we obtain a path of length at most~2 between two vertices of $C$.
	This yields a cycle of length at most $\tfrac{|C|}{2} + 2$ and we get $g \leq \tfrac{g}{2} + 2$ or $g\leq 4$, a contradiction.
	
	If $H_{g-1}$ is acyclic, there are more options.
	For ease of notation, let $T$ be the non-trivial component of $H_{g-1}$.
	First assume that $\degree[H_{g-1}]{w}=2$ and let $u$ be its unique new neighbour.
	If $u$ is in $T$, then $|wTu|\leq g-1$ by \Cref{acyclic-degrees} since $d_2^{g-1} \leq 1$, a contradiction.
	Thus, $u$ has degree~0 and
	\begin{equation}
		\text{$H_g$ is acyclic with $t_g=1$, $d_3^g = g-1$, $d_2^g=0$, $d_1^g = g+1$, and $d_0^g=1$}\label{option:g-acyclic}
	\end{equation}
	by \Cref{acyclic-degrees}.
	Here we have used that $g\geq k-1$ to obtain that $d_2^g = 0$, in particular, this case does not occur if $g=k$.
	
	This just leaves the case that $\degree[H_{g-1}]{w}\leq 1$, giving us $d_2^{g-1}=0$.
	Assume first that $\degree[H_{g-1}]{w} = 1$.
	Should both new neighbours of $w$ have degree~0, then $H_g$ is acyclic, and we are in Option~\eqref{option:g-acyclic}.
	Otherwise, let $u\in T$ be a new neighbour of $w$.
	Again we use \Cref{ass-graphs-path-lengths-acyclic} to see that $|uTw| \leq g$ and obtain a cycle $C$ of length exactly $g$ in $H_{g-1}+uw$.
	Should the second new neighbour of $w$ also be in $T$, we obtain a path of length at most~2 between two vertices of $C$, yielding girth at most~4, just as above.
	This does not occur, so $H_g$ contains a unique cycle of length~$g$ and checking the values $d_j^g$ shows that we are in Option~\eqref{option:g-cycle-1}.
	
	We are left with the situation where $\degree[H_{g-2}]{w} = 0$.
	Because $d_0^{g-1}\leq 2$ at least two of its new neighbours, say $u_1,u_2$ are in $T$.
	We first argue that the third neighbour, $u_3$, is not.
	If this were the case, then each of the three paths $u_iTu_j$ for $1\leq i<j\leq 3$ would yield a cycle when combined with $u_iwu_j$.
	Thus each of these paths contains at least $g-1$ vertices, at least $g-3$ of which have degree~3.
	Since $H_{g-1}$ only has $g-2$ such vertices and $g\geq 7$, we get the existence of a vertex $t\in T$ that occurs on all three paths.
	We assume that the order $|u_iTt|$ is maximised for $u_3$, then $|\overcirc{u_1}T\overcirc{t}| + |\overcirc{u_2}T\overcirc{t}| \leq \tfrac{2}{3}(g-3)$ since there are only $g-3$ degree~3 vertices other than $t$ in $H_{g-1}$.
	But this means that $|u_1Tu_2| = |\overcirc{u_1}T\overcirc{t}| + |\overcirc{u_2}T\overcirc{t}| + 3 \leq \tfrac{2}{3}g+1$.
	Hence $g\leq \tfrac{2}{3}g + 2$ or $g\leq 6$, contradicting our assumption.
	
	Consequently $u_3$ has degree~0 and $H_{g}$ has a unique cycle $u_1Tu_2wu_1$ of length at most $g+1$, giving us the final option in which
	\begin{equation}
		\text{$H_g$ has a unique cycle, $t_g=1$, $d_3^g = g-1$, $d_2^g=2$, $d_1^g = g-1$, $d_0^g= 1$.}\label{option:g-cycle-2}
	\end{equation}
	The last equality follows once more from the fact that $H_g$ has $k+g$ vertices, so $g=k-1$ in this case.
	
	We now have several possible options for $H_g$ and we begin by taking a look at Option~\eqref{option:g-cycle-1}, which is the only one relevant for the case that $k=g$.
	Here $\degree[H_g]{x} = 2$ and the identical argumentation to before shows that the new neighbour of $x$ has degree~0 and $H_{g+1}$ still has a unique cycle of length~$g$ and $d_3^{g+1} = g$, $d_2^{g+1} = 0$, $d_1^{g+1} = g$, and $1\leq d_0^{g+1}\leq 2$.
	
	We claim that this suffices to prove the third inequality.
	Suppose $g=k$, then we already know we are in Option~\eqref{option:g-cycle-1}.
	Hence, $H_{g+1}$ is of the form described above and $d_0^g = 1$.
	We regard $H_{g+2}$.
	If the degree~0 vertex leaves, all its neighbours, $u_1,u_2,u_3$ say, are in the same component	as the cycle $C$.
	Suppose $u_1$ and $u_2$ have minimal distance in $C$, then there exists a path of length at most $\tfrac{g}{3}$ between them and we obtain a cycle of length $\tfrac{g}{3} + 4$ by extending it to use $u_1yu_2$.
	(Note that all degree~1 vertices of $H_{g+1}$ are adjacent to a degree~3 one by \Cref{ass-graphs-neighbours} and all degree~3 vertices are on $C$.)
	This yields $g\leq 6$, a contradiction.
	Hence, $\degree[H_{g+1}]{y} = 1$ and it has at least one neighbour $u$ in the same component as the cycle.
	But now we get a cycle of length at most $\tfrac{g}{2} + 3$ which again yields $g\leq 6$, proving the $g\leq k-1$ for $k\geq 7$.
	
	For the final inequality ($g\leq k-2$ if $k\geq 10$), we can update our assumptions to $g=k-1$, and $k\geq 10$, so $g\geq 9$.
	With this we finish the description of the options for $H_{g+1}$.
	The option described above is
	\begin{equation}
		\text{$H_{g+1}$ has a unique cycle, $t_{g+1}=1$, $d_3^{g+1} = d_1^{g+1}= g$, $d_2^{g+1} = 0$, $d_0^{g+1} = 2$.}\label{option:g+1-cycle-1}
	\end{equation}
	
	For Option~\eqref{option:g-acyclic}, we denote the non-trivial tree of $H_g$ by $T$.
	If the vertex of degree~0 leaves, then it has all three neighbours, $u_1, u_2, u_3$ say, in $T$.
	As each path $u_iTu_j$ contains at least $g-1$ vertices, it has $g-3$ degree~3 vertices.
	As before, this yields a vertex $t$ common to all paths as $g\geq 9$ since $H_g$ only has $g-1$ many in total.
	The analogous argumentation yields a path, say $u_1Tu_2$ of order at most $\tfrac{2}{3}(g-2) + 3$ and thus a cycle of length $\tfrac{2}{3}g + \tfrac{8}{3}$.
	Hence $g\leq 8$, contradicting our assumption.
	
	As a result, $\degree[H_{g}]{x} = 1$.
	If $x$ has two neighbours in $T$, then the three paths between $x$ and these neighbours would share a common vertex and the short path from above yields a contradiction again.
	So $x$ has one new neighbour of degree~0 and the other one in $T$.
	Let $u$ be this neighbour, then $|uTx|\leq g + 1$ by \Cref{ass-graphs-path-lengths-acyclic} and we obtain 
	\begin{equation}
		\text{$H_{g+1}$ has a unique cycle and $t_{g+1} = d_2^{g+1} = d_0^{g+1} =1$, $d_3^{g+1} = d_1^{g+1} = g$}. \label{option:g+1-cycle-2}
	\end{equation}
	
	Finally, we regard the Option~\eqref{option:g-cycle-2}, which again contains a unique cycle $C$.
	Here, $\degree[H_g]{x} = 2$ and $x$ has a unique new neighbour $u$.
	If $u\notin T$, then this coincides with Option~\eqref{option:g+1-cycle-2}.
	Otherwise, if $u\in T$, we obtain a path between two vertices of $C$.
	If $|C| = g+1$, then $C$ contains all vertices of degree~3 and~2 in $H_g$, giving this path length~3.
	Otherwise, if $|C|=g$, this path potentially has length~4.
	In either case, we obtain a cycle of length at most $\tfrac{g+1}{2} + 3 \leq \tfrac{g}{2} + 4$ and this yields $g\leq 8$, a contradiction.
	
	The next graph is $H_{g+2} = H_{g+1} + \EXY{z} + v_{g+2}$.
	For $H_{g+1}$ only the two Options~\eqref{option:g+1-cycle-1} and~\eqref{option:g+1-cycle-2} remain, and we start with the latter.
	Here we have $\degree[H_{g+1}]{z}=2$ by hdf and $z$ is on the unique cycle $C$. 
	Thus, if $|C|=g$, then we either end up with a unique cycle of length $g$ or $G$ contains a cycle of length at most $\tfrac{g}{2} + 3$, contradicting $g\geq 9$.
	If $|C|=g+1$, a unique cycle of length~$g+1$ remains or a cycle of length at most~$\tfrac{g+1}{2} + 2$ is present, which is a contradiction.
	Consequently,
	\begin{equation}
		\text{$H_{g+2}$ has a unique cycle and $d_3^{g+1} = d_1^{g+1}= g + 1$, $d_2^{g+1} = 0$, $d_0^{g+1} = 1$}. \label{option:g+2-cycle}
	\end{equation}
	
	If $H_{g+1}$ is as specified in~\eqref{option:g+1-cycle-1}, then $\degree[H_{g+1}]{z}$ results in a path of length~4 between two vertices of $C$, which we have already seen to cause $g\leq 8$.
	So $z$ has degree~1 and its new neighbours have degree~0 as otherwise we obtain a path of length~3 between two vertices of~$C$.
	This leaves the unique cycle of length~$g$ intact and results in $g+1,0,g+1,1$ vertices of degree~$3,2,1,0$, letting us include it in Option~\eqref{option:g+2-cycle}.
	
	Now we can wrap up the proof by regarding $H_{g+3}$.
	Since only Option~\eqref{option:g+2-cycle} remains for $H_{g+2}$, it has a unique cycle $C$.
	If a vertex of degree~1 leaves, then it is in the same component as $C$ in $H_{g-2}$ and at least one of its new neighbours is in this component as well.
	This yields a cycle of length at most~$\tfrac{g}{2} + 4$ or~$\tfrac{g+1}{2} + 3$ in $H_{g+3}$, which is a contradiction to $g\geq 9$ in either case.
	Should the degree~0 vertex leave, we get that all three of its neighbours are in the component of $H_{g-2}$ containing $C$ and we find a new cycle of length at most~$\tfrac{g}{3} + 5$ or $\tfrac{g+1}{3} + 4$.
	Again this contradicts $g\geq 9$ and the proof is complete.
\end{proof}
We obtain Theorem~\ref{thm: unique-smallest-graphs} as a direct consequence of Theorem~\ref{cubic-pw-girth-bounds-small-pw} and the values of \Cref{tab:tight-bounds}.

As a further example of an application of our algorithm, we prove the following theorem, which gives a constructive characterisation of the class of all cubic graphs of path-width~3 which are extremal with respect to $\xi$.

\pwthreegfour*

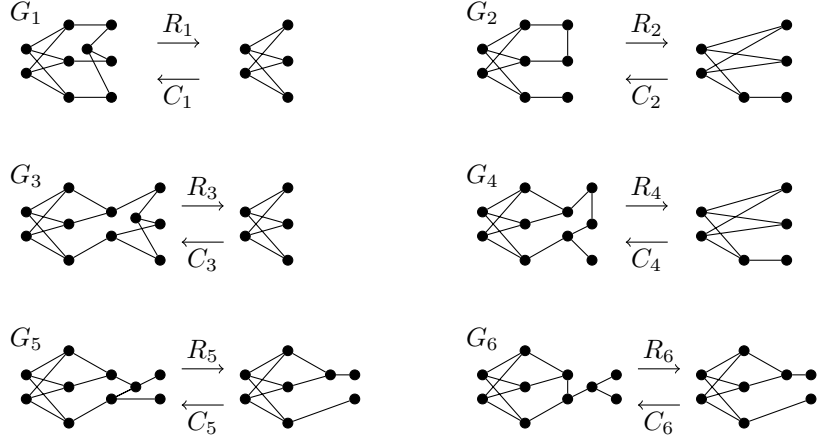
\begin{figure}[!ht]
	\centering
	\begin{tikzpicture}[scale = .8]
		\def\colone{0}
		\def\coltwo{.7}
		\def\colthree{1.4}
		\def\colfour{1.8}
		\def\colfive{2.2}
		\def\arrows{\node (i1) at (0,.3) {};
			\node (i2) at (1,.3) {};
			\draw[->] (i1)--(i2);
			\node (i3) at (0,-.3) {};
			\node (i4) at (1,-.3) {};
			\draw[<-] (i3)--(i4);}
		\def\k23{\node[svertex] (u) at (\colone,-.2) {};
			\node[svertex] (v) at (\colone,.2) {};
			\node[svertex] (x1) at (\coltwo,-.6) {};
			\node[svertex] (x2) at (\coltwo,0) {};
			\node[svertex] (x3) at (\coltwo ,.6) {};
			\draw (u)--(x1)--(v)--(x2)--(u)--(x3)--(v);
		}
		\def\3thirdcolmn{
			\node[svertex] (y1) at (\colthree,-.6) {};
			\node[svertex] (y2) at (\colthree,0) {};
			\node[svertex] (y3) at (\colthree,.6) {};
		}
		\def\2thirdcolmn{
			\node[svertex] (y1) at (\colthree,-.2) {};
			\node[svertex] (y2) at (\colthree,.2) {};
		}
		\def\threefourthcolmn{
			\node[svertex] (w1) at (\colfour,-.6) {};
			\node[svertex] (w2) at (\colfour,0) {};
			\node[svertex] (w3) at (\colfour,.6) {};
		}
		\def\twofourthcolmn{
			\node[svertex] (w1) at (\colfour,-.2) {};
			\node[svertex] (w2) at (\colfour,.2) {};
		}
		\def\onefourthcolmn{
			\node[svertex] (w1) at (\colfour,0) {};
		}
		\def\posv{(0,1)}
		
		\begin{scope}
			\begin{scope}
				\node (lg3) at (0,.8) {$G_1$};
				\k23
				\3thirdcolmn
				\draw (x1)--(y1) (x2)--(y2) (x3)--(y3);
				\node[svertex] (z) at (1,0.2) {};
				\draw (y1)--(z)--(y2) (y3)--(z);
			\end{scope}
			
			\begin{scope}[shift={(2,0)}]
				\arrows
				\node (lr1) at (.5,.6) {$R_1$};
				\node (lc1) at (.5,-.6) {$C_1$};
			\end{scope}
			
			\begin{scope}[shift={(3.6,0)}]
				\k23
			\end{scope}
		\end{scope}
		
		\begin{scope}[shift={(7.5,0)}]
			\begin{scope}[shift ={(0,0)}]
				\node (lg3) at (0,.8) {$G_2$};
				\k23
				\3thirdcolmn
				\draw (x3)--(y3)--(y2)--(x2) (x1)--(y1);
			\end{scope}
			
			\begin{scope}[shift={(2.2,0)}]
				\arrows
				\node (lr1) at (.5,.6) {$R_2$};
				\node (lc1) at (.5,-.6) {$C_2$};
			\end{scope}
			
			\begin{scope}[shift={(3.6,0)}]
				\node[svertex] (u) at (\colone,-.2) {};
				\node[svertex] (v) at (\colone,.2) {};
				\node[svertex] (x1) at (\coltwo,-.6) {};
				\3thirdcolmn
				\draw (u)--(x1)--(v)--(y3)--(u)--(y2)--(v) (y1)--(x1);
			\end{scope}
		\end{scope}
		
		\begin{scope}[shift={(0,-2.7)}]
			\begin{scope}[shift ={(0,0)}]
				\k23
				\node (lg3) at (0,.8) {$G_3$};
				\2thirdcolmn
				\node[svertex] (w1) at (\colfive, -.6) {};
				\node[svertex] (w2) at (\colfive, 0) {};
				\node[svertex] (w3) at (\colfive, .6) {};
				\node[svertex] (z) at (\colfour,.1) {};
				\draw (x3)--(y2)--(x2) (x1)--(y1)
				(w2)--(y1)--(w1) (y2)--(w3)
				(w1)--(z)--(w2) (w3)--(z)
				;
			\end{scope}
			
			\begin{scope}[shift={(2.4,0)}]
				\arrows
				\node (lr1) at (.5,.6) {$R_3$};
				\node (lc1) at (.5,-.6) {$C_3$};
			\end{scope}
			
			\begin{scope}[shift={(3.6,0)}]
				\k23
			\end{scope}
		\end{scope}
		
		\begin{scope}[shift={(7.5,-2.7)}]
			\begin{scope}[shift ={(0,0)}]
				\node (lg3) at (0,.8) {$G_4$};
				\k23
				\2thirdcolmn
				\threefourthcolmn
				\draw (x3)--(y2)--(x2) (x1)--(y1)
				(w2)--(y1)--(w1) (y2)--(w3) (w2)--(w3)
				;
			\end{scope}
			
			\begin{scope}[shift={(2.2,0)}]
				\arrows
				\node (lr1) at (.5,.6) {$R_4$};
				\node (lc1) at (.5,-.6) {$C_4$};
			\end{scope}
			
			\begin{scope}[shift={(3.6,0)}]
				\node[svertex] (u) at (\colone,-.2) {};
				\node[svertex] (v) at (\colone,.2) {};
				\node[svertex] (x1) at (\coltwo,-.6) {};
				\3thirdcolmn
				\draw (u)--(x1)--(v)--(y3)--(u)--(y2)--(v) (y1)--(x1);
			\end{scope}
		\end{scope}
		
		\begin{scope}[shift={(0,-5.4)}]
			\begin{scope}[shift ={(0,0)}]
				\node (lg3) at (0,.8) {$G_5$};
				\k23
				\2thirdcolmn
				\onefourthcolmn
				\node[svertex] (z) at (\colfive, .2) {};
				\node[svertex] (z1) at (\colfive, -.2) {};
				\draw (x3)--(y2)--(x2) (x1)--(y1)
				(y2)--(w1)--(y1)
				(y1)--(z)
				(y1)--(z1)
				;
			\end{scope}
			
			\begin{scope}[shift={(2.4,0)}]
				\arrows
				\node (lr1) at (.5,.6) {$R_5$};
				\node (lc1) at (.5,-.6) {$C_5$};
			\end{scope}
			
			\begin{scope}[shift={(3.6,0)}]
				\k23
				\node[svertex] (y1) at (\colfour,-.2) {};
				\node[svertex] (y2) at (\colthree,.2) {};
				\node[svertex] (z) at (\colfour, .2) {};
				\draw (x3)--(y2)--(x2) (y2)--(z) (x1)--(y1);
			\end{scope}
		\end{scope}
		
		\begin{scope}[shift={(7.5,-5.4)}]
			\begin{scope}[shift ={(0,0)}]
				\k23
				\node (lg3) at (0,.8) {$G_6$};
				\2thirdcolmn
				\onefourthcolmn
				\node[svertex] (z1) at (\colfive,-.2) {};
				\node[svertex] (z2) at (\colfive,.2) {};
				\draw (x1)--(y1)--(y2)--(x2) (y2)--(x3)
				(y1)--(w1)
				(z1)--(w1)--(z2)
				;
			\end{scope}
			
			\begin{scope}[shift={(2.4,0)}]
				\arrows
				\node (lr1) at (.5,.6) {$R_6$};
				\node (lc1) at (.5,-.6) {$C_6$};
			\end{scope}
			
			\begin{scope}[shift={(3.6,0)}]
				\k23
				\node[svertex] (y2) at (\colthree, .2){};
				\node[svertex] (z1) at (\colfour,-.2) {};
				\node[svertex] (z2) at (\colfour,.2) {};
				\draw (x3)--(y2)--(x2)
				(y2)--(z2)  (x1)--(z1)
				;
			\end{scope}
		\end{scope}
	\end{tikzpicture}
	\caption{Reductions for graphs of path-width~3 and girth~4.}
	\label{fig: reductions for pw3g4}
\end{figure}
\begin{proof}[Proof of \eqref{itm: 3connected pw3g4}]
	Let $G$ be a cubic graph of path-width~3 and girth~4.
	Running \Cref{alg:testing-property-pi-version-1} for $k =3$, $\calG$ the class of all cubic girth-4-graphs, and $\calU = \{K_{3,3},G_1, G_2, \dots, G_6\}$ confirms that $\calU$ is unavoidable for $\calG^3$.
	If $G \cong K_{3,3}$, then the empty sequence yields the desired statement.
	Therefore, we may assume that $G$ contains one of the graphs $G_1, \dots, G_6$ as a subgraph. 
	Since $G$ is 3-connected, it contains an isomorphic copy of $G_1$ or $G_2$ as a subgraph.
	Fix $i \in \{1,2\}$ and assume that $G'$ is obtained from~$G$ by the reduction $R_i$.
	We leave it to the reader to check that $G'$ is a simple cubic graph of girth~4 and a minor of $G$.
	In particular, the path-width of $G'$ is at most~3.
	Indeed, $G'$ is of path-width~3 since every simple cubic graph contains a $K_4$-minor.
	Since $|V(G')| \leq |V(G)|-2$, we obtain inductively that $G$ can be obtained from $K_{3,3}$ by a finite sequence of the construction steps $C_1$ and $C_2$.
\end{proof}
For the proof of~\eqref{itm: general pw3g4} we establish the following Lemmas.

\begin{lemma}
	\label{subcubic-graphs-K4-minors}
	If $G$ is a subcubic graph of girth~4 with at most two vertices of degree~2 and none of degree less than~2, then $G$ has a minor $M\cong K_4$.
\end{lemma}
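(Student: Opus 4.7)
The plan is to locate a $4$-cycle in $G$ and grow four branch sets around it into a $K_4$-minor, using the restriction on degree-$2$ vertices to obtain the missing diagonal connections.

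Since $G$ has girth~$4$, I would fix a $4$-cycle $C = v_1 v_2 v_3 v_4 v_1$. Because at most two vertices of $G$ have degree~$2$, at least two of the $v_i$'s are cubic; for each cubic $v_i$ let $u_i$ denote its unique off-cycle neighbor, which lies outside $V(C)$ by the girth hypothesis (any chord $v_i v_{i+2}$ would create a triangle). Four of the six $K_4$-adjacencies are supplied for free by $E(C)$, so the task reduces to producing the two diagonal adjacencies $v_1 \leftrightarrow v_3$ and $v_2 \leftrightarrow v_4$ through branch set extensions.

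I would then perform a case split on the coincidence pattern among the $u_i$'s. In the most symmetric configuration, $u_1 = u_3$ and $u_2 = u_4$, the branch sets $\{v_1, u_1\}, \{v_3\}, \{v_2, u_2\}, \{v_4\}$ immediately witness a $K_4$-minor. If only one such coincidence holds, say $u_1 = u_3$, then $\{v_1, u_1\}$ handles one diagonal and I would route the second diagonal through a path in $G - V(C) - u_1$; such a path will exist because the at-most-two-degree-$2$ budget prevents $G$ from fragmenting into enough small components to obstruct it.

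The hardest case occurs when no two opposite $u_i$'s coincide. In this case I would look in $G - V(C)$ for two internally vertex-disjoint paths, one joining $N(v_1) \setminus V(C)$ to $N(v_3) \setminus V(C)$ and the other joining $N(v_2)\setminus V(C)$ to $N(v_4)\setminus V(C)$; together with $C$, such a $2$-linkage produces the desired branch sets. The main obstacle will be certifying this $2$-linkage. My plan is to argue by contradiction: if it fails, then by Menger's theorem there is a single vertex $z$ whose removal separates the $u_i$'s so that neither pair $(u_1, u_3)$ nor $(u_2, u_4)$ shares a component of $G - V(C) - z$. A careful degree count on the resulting components, combined with $\delta(G)\geq 2$ and the girth-$4$ restriction (which prevents short shortcuts inside a component from creating new triangles), will then force more than two vertices of degree~$2$ to appear in $G$, contradicting the hypothesis.
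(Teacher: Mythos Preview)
Your approach has a genuine gap in the crucial step. You write that if the desired $2$-linkage in $G-V(C)$ fails, then ``by Menger's theorem there is a single vertex $z$ whose removal separates the $u_i$'s so that neither pair $(u_1,u_3)$ nor $(u_2,u_4)$ shares a component''. This is not what Menger's theorem says. Menger controls the number of disjoint paths between two \emph{sets}, not whether two \emph{prescribed pairs} can be linked simultaneously. A graph can be $3$-connected (so no single cut vertex exists) and still fail to admit disjoint $u_1$--$u_3$ and $u_2$--$u_4$ paths; the classical obstruction is a planar embedding with $u_1,u_2,u_3,u_4$ in that cyclic order on a face. The correct tool here is the $2$-linkage theorem of Seymour/Shiloach/Thomassen, and invoking it would require you to analyse the obstruction structure, not just a single cut vertex. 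As written, the contradiction you aim for (``more than two degree-$2$ vertices must appear'') does not follow from the hypothesis you have set up.

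There is a secondary looseness: your case analysis silently assumes all four $u_i$ exist when you discuss the ``hardest case'', but if the two degree-$2$ vertices happen to be an opposite pair $v_1,v_3$ on your chosen $4$-cycle, then $u_1,u_3$ are undefined and you have no handle on the $v_1$--$v_3$ diagonal at all. You would need to argue that some $4$-cycle avoids this configuration, which is not obvious.

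The paper's proof sidesteps all of this by reducing to a known fact: every simple cubic graph has a $K_4$-minor~\cite{Wor79}. One simply suppresses the degree-$2$ vertices; girth~$4$ guarantees the suppressed neighbours are non-adjacent, so the result stays simple, except in the single problematic configuration $N(u)=N(u')$, which is handled by a short induction on $|V(G)|$ after deleting $\{u,u',v,v'\}$. This is both shorter and avoids any linkage machinery.
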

\begin{proof}
	Note that all simple cubic graphs have a $K_4$-minor by \cite{Wor79}.
	Since $G$ has girth~4, the neighbours of any vertex are non-adjacent.
	Thus, if $G$ has exactly one degree~2 vertex, we can remove it and join its two neighbours by an edge.
	The resulting graph is simple and cubic and, hence, has the desired minor.
	
	This also works if $G$ has two degree~2 vertices $u$ and $u'$ unless $\NX{u} = \NX{u'}=\{v,v'\}$.
	In the latter case, we proceed inductively using the $K_4$ as a base case.
	Either $G-\{u,u'v,v'\}$ is a smaller subcubic graph with two degree~2 vertices, or it has a single degree~1 vertex $w$ and only degree~3 vertices otherwise.
	In this final case, we delete $w$ to end up with one vertex of degree~2 and obtain the desired minor similar to the first part of the proof.
\end{proof}

\begin{lemma}
	\label{cubic-pw3-bride-path-dec}
	Let $uv$ be a cut-edge in the connected cubic graph $G$ of path-width~3 and girth~4. 
	Furthermore, let $K_u,\, K_v$ be the components of $G-uv$ containing $u,\, v$, respectively.
	If $\NX{u}=\{v,u_1,u_2\}$, then there exists a path-decomposition of $K_u-u$ of width~3 whose first bag contains $\{u_1, u_2\}$.
\end{lemma}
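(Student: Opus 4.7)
The plan is to first produce a path-decomposition of $K_u$ of width at most $3$ whose last bag is exactly $\{u, u_1, u_2\}$, and then delete $u$ from every bag and reverse the order. Because both edges $uu_1$ and $uu_2$ of $K_u$ are already covered by this last bag, $u$'s bag interval can be trimmed to consist of this single bag alone before $u$ is deleted; the resulting sequence, after reversing, is then a valid path-decomposition of $K_u - u$ of width at most $3$ whose first bag is $\{u_1, u_2\}$.

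To construct the required decomposition of $K_u$, I would begin with a smooth path-decomposition $(P, \calV)$ of $G$ of width $3$ and rearrange it using the cut-edge $uv$: by a standard manipulation at bridges, we may assume that there is an index $c$ with $\{u, v\} \subseteq V_c$, with $V_i \subseteq V(K_u) \cup \{v\}$ for all $i \le c$ and $V_i \subseteq V(K_v) \cup \{u\}$ for all $i \ge c$. Restricting the prefix $V_1, \dots, V_c$ to $V(K_u)$ (equivalently, deleting $v$ from every bag of the prefix) yields a path-decomposition of $K_u$ of width at most $3$ whose last bag contains $u$. Since $|V_c| \le 4$ and $\{u, v\} \subseteq V_c$, this last bag has at most two further vertices besides $u$ and $v$, and the goal reduces to arranging that these two vertices are $u_1$ and $u_2$.

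Both edges $uu_1$ and $uu_2$ are covered somewhere within $u$'s bag interval $[a, c]$, so each of $u_1, u_2$ appears in some $V_j$ with $a \le j \le c$. If $u_i \notin V_c$ for some $i \in \{1,2\}$, then its bag interval ends at a bag $V_{b_i}$ with $b_i < c$, and I would carry $u_i$ forward through $V_{b_i+1}, \dots, V_c$ by swapping it in for a vertex of each such bag whose contiguity constraints permit removal. The existence of a suitable swap partner in every bag along the way is where the smoothness of the decomposition, together with cubicity and the girth-$4$ hypothesis, come into play: smoothness means each transition removes exactly one vertex, while the fact that $u$'s neighbourhood inside $K_u$ is precisely $\{u_1, u_2\}$ and that no short cycle collides with the rearrangement leaves enough slack to perform the swap without exceeding width $3$.

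The main obstacle is precisely this swap step; making it rigorous requires a careful local analysis of the final bags of $u$'s interval and is where the cubic and girth hypotheses are really used. Once the swap has been carried out for both $u_1$ and $u_2$, we have $V_c = \{u, v, u_1, u_2\}$, and applying the reduction described in the first paragraph produces the desired path-decomposition of $K_u - u$.
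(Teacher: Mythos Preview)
Your outline has two genuine gaps, and the paper takes a different route that sidesteps both.

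The first gap is the ``standard manipulation at bridges''. For tree-decompositions one can indeed split along a cut-edge, but for \emph{path}-decompositions there is no such off-the-shelf fact: restricting a width-$3$ decomposition of $G$ to $K_u$ gives a width-$\le 3$ decomposition of $K_u$, yet there is no reason $u$ ends up in an extremal bag. (For a path $p_1\ldots p_{10}$ with $u=p_5$, no width-$1$ decomposition has $u$ at an end; the cubic setting is more constrained, but you would need to argue this.) So your step~2 already needs a proof of roughly the same difficulty as the lemma itself. The second gap is the swap step, which you yourself flag: in a smooth width-$3$ decomposition each bag has exactly four slots, and the vertex you wish to displace to make room for $u_i$ may still have uncovered edges ahead, so ``enough slack'' is not automatic.

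The paper avoids both problems by \emph{anchoring} the decomposition with a $K_4$. Since $K_v$ is subcubic with a single degree-$2$ vertex and girth at least~$4$, the preceding lemma gives it a $K_4$-minor $M$; then $H\coloneqq K_u + M + uw$ (for some $w\in V(M)$) is a minor of $G$ and hence has path-width at most~$3$. In any smooth width-$3$ decomposition of $H$ some bag equals $V(M)$, and because $H-V(M)=K_u$ is connected this bag must be an end bag, say $V_1$. Now the first four transitions are forced: the three vertices of $M\setminus\{w\}$ leave (Observation on vertices whose neighbours are present), then $w$ leaves (degree-$2$ lemma), and one arrives at $V_5=\{u,x_1,x_2,x_3\}$ with all $x_j\in K_u$. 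A two-case local check---either $u$ leaves $V_5$, whence $u_1,u_2\in V_5$, or some $x_1$ leaves, whence $x_1\in\{u_1,u_2\}$ and a single explicit swap of $V_5,V_6$ finishes---yields $\{u_1,u_2\}$ in the right bag. Deleting $V(M)\cup\{u\}$ and the (now redundant) first four bags gives the desired decomposition of $K_u-u$. The $K_4$ trick is the missing idea: it pins down one end of the path-decomposition exactly, which is what your direct approach lacks.
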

\begin{proof}
	Let $\NX{v} = \{u,v_1,v_2\}$.
	Notice that $K_v$ is a subcubic graph with exactly one vertex of degree~2.
	By \Cref{subcubic-graphs-K4-minors} it contains a minor $M$ with $M\cong K_4$.
	Hence, $H\coloneqq K_u + M + uw$, where $w\in V(M)$ is a minor of~$G$ and has path-width~3.
	Let $(P,\calV)$ be a smooth path-decomposition of $H$, then there exists a bag $V_i=V(M)$.
	
	Since $H-V(M)$ is connected, $i$ is an end of $P$, say, $i = 1$.
	By \Cref{neighbours-present-can-leave} we may assume that the vertices in $V(M)\setminus \{w\}$ are the first to leave the path-decomposition.
	Using \Cref{degree-2-vertices-leave}, we may further assume that the vertex $w$ leaves next.
	Thus, $V_5=\{u,x_1,x_2,x_3\}$ for some $x_1, x_2, x_3 \in V(H)\setminus V(M)$.
	
	If $u$ leaves $V_5$, then $\{u_1, u_2\} \subseteq V_5$ Otherwise a neighbour of $u$, say $x_1=u_1$, leaves~$V_5$.
	Again by \Cref{degree-2-vertices-leave} we may assume that $u$ leaves $V_6$.
	Replacing $V_5$ by $\{u,u_1,u_2,x_2\}$ and $V_6$ by $\{u_1,u_2,x_2,x_3\}$ yields another path-decomposition of $H$.
	Since $\{u_1, u_2\} \subseteq V_5$ in either case now, deleting the vertices of the $M$ and $u$ together with the first four (now empty) bags yields a path-decomposition of $K_u-u$ with the desired property.
\end{proof}

\begin{lemma}
	\label{cubic-pw3-2-edge-sep-path-dec}
	Let $\{uv, u'v'\}$ be a minimal 2-edge-separator of a connected cubic graph~$G$ of path-width~3 and girth~4 such that $uu',vv'\notin E(G)$.
	Assume that for $a \in \{u,v\}$ the vertices $a$ and  $a'$ are contained in the same component $K_{a, a'}$ of $G-\{uv,u'v'\}$.
	There exists an $x\in\{u,u'\}$ such that $K_{u,u'}-x$ has a path-decomposition of width~3 whose first bag contains $\{u,u'\}\cup\NX{x}\setminus\{x,v,v'\}$.
\end{lemma}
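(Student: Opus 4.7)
The plan is to mimic the proof of \Cref{cubic-pw3-bride-path-dec}, adapted by attaching a $K_4$-gadget obtained from $K_{v,v'}$ to \emph{both} cut-endpoints $u$ and $u'$ simultaneously, rather than to a single vertex as in the bridge case.

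First, observe that $K_{v,v'}$ is a connected subcubic graph of girth at least $4$ with exactly two vertices of degree $2$ (namely $v$ and $v'$) and no vertex of degree less than $2$. Hence \Cref{subcubic-graphs-K4-minors} yields a $K_4$-minor $M$ of $K_{v,v'}$. Let $w_v, w_{v'} \in V(M)$ denote the branch vertices of $M$ corresponding to the branch sets containing $v$ and $v'$, respectively (possibly $w_v = w_{v'}$). Define
\[
H \coloneqq K_{u,u'} \cup M + u w_v + u' w_{v'}.
\]
Contracting the branch sets of $M$ inside $K_{v,v'}$ and deleting any unused vertices and edges realises $H$ as a minor of $G$, so $\pw(H) \leq \pw(G) = 3$.

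Next, fix a smooth width-$3$ path-decomposition $(P,\calV)$ of $H$. Since $M \cong K_4$ is a subgraph of $H$, some bag equals $V(M)$; because $H - V(M) = K_{u,u'}$ is connected, this bag lies at an end of $P$, and we relabel so that $V_1 = V(M)$. Iterated applications of \Cref{neighbours-present-can-leave} arrange for the two or three vertices of $V(M) \setminus \{w_v, w_{v'}\}$ to leave first; then \Cref{degree-2-vertices-leave} applied to $w_v$ (and, if distinct, afterwards to $w_{v'}$) makes these leave next, since at each stage their only unsettled neighbours lie in the two-element set $\{u, u'\}$. By the time $V(M)$ has been fully processed, the edges $u w_v$ and $u' w_{v'}$ force both $u$ and $u'$ to have already entered the decomposition.

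Finally, we choose $x \in \{u, u'\}$ and perform the bag-swap manipulation from the last paragraph of the proof of \Cref{cubic-pw3-bride-path-dec} to reach a bag of the form $\{u, u', x_1, x_2\}$, where $\{x_1, x_2\} = N(x) \cap V(K_{u,u'})$. Once $x$ leaves this bag, the successor bag contains
\[
\left(\{u, u'\} \setminus \{x\}\right) \cup \{x_1, x_2\} = \{u, u'\} \cup N(x) \setminus \{x, v, v'\},
\]
and discarding the initial bags that covered $V(M) \cup \{x\}$ delivers the required width-$3$ path-decomposition of $K_{u,u'} - x$.

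The main obstacle will be the last step. In the bridge case only one cut-endpoint together with its two $K_u$-neighbours had to be packed into a single bag, whereas here the \emph{other} cut-endpoint must also be accommodated. The freedom to pick $x \in \{u, u'\}$ is crucial: the rearrangement may succeed for only one of the two candidates, and I expect a short case analysis separating $w_v = w_{v'}$ from $w_v \neq w_{v'}$ and tracking the entry order of $u$, $u'$ and their $K_{u,u'}$-neighbours in the decomposition to complete the argument.
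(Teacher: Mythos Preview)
Your plan coincides with the paper's proof through the point where the four $M$-vertices have left and $V_5=\{u,u',x_1,x_2\}$. The case analysis you defer is in fact simpler than the one you anticipate: no $w_v=w_{v'}$ versus $w_v\neq w_{v'}$ split and no entry-order tracking is required. Either some $x\in\{u,u'\}$ already leaves $V_5$, in which case $\{x_1,x_2\}$ are forced to be the two $K_{u,u'}$-neighbours of $x$ and you are done; or $x_1$ leaves, which forces $N_H(x_1)=\{u,u',x_2\}$, so $x_1$ is a \emph{common} neighbour of both $u$ and $u'$. In the latter case $u$ has degree~$2$ in the associated graph at stage~$6$, so a single application of \Cref{degree-2-vertices-leave} lets $u$ leave $V_6$, after which replacing $V_5,V_6$ by $\{u,u',u_1,u_2\}$ and $\{u',u_1,u_2,x_2\}$ (where $\{u_1,u_2\}=N(u)\cap V(K_{u,u'})$, and note $x_1\in\{u_1,u_2\}$) gives the desired decomposition with $x=u$.
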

\begin{proof}
	Similar to the previous proof, we apply \Cref{subcubic-graphs-K4-minors} to $K_{v,v'}$ to obtain that it has a minor $M \cong K_4$.
	Hence, $H=K_{u,u'} + M + \{uw,u'w'\}$ (where $w$ and $w'$ are vertices of $M$ and $w=w'$ is possible) is a minor of $G$ and has path-width~3.
	Let $(P,\calV)$ be a smooth path-decomposition of $H$, then there exists a bag $V_i=V(M)$.
	Since $H - V(M)$ is connected, $i$  is an end $P$ and we may assume that $V_1 = V(M)$.
	
	By \Cref{neighbours-present-can-leave} we may assume that the two or three vertices of $M$ that have degree~3 in $H$ are the first to leave.
	In the case that $w=w'$, \Cref{degree-2-vertices-leave} lets us assume that $w$ leaves next.
	Otherwise, we may assume that $w$ leaves after the degree~3 vertices (since the degree~0 vertices do not leave), and \Cref{degree-2-vertices-leave} tells us $w'$ follows, without loss of generality.
	In either case, the vertices of $M$ leave the first four bags and $V_5=\{u,u',x_1,x_2\}$.
	
	If $u$ or $u'$ leave this bag, then the remaining two vertices in the bag are its neighbours.
	Thus deleting the vertices of $M$ together with $u$ or $u'$ and the first four bags yields the desired decomposition.
	Otherwise, assume that $x_1$ leaves.
	Since $\NX{x_1} = \{u,u',x_2\}$, $x_1$ is a neighbour of $u$ (and~$u'$).
	By \Cref{degree-2-vertices-leave} we may assume that $u$ leaves $V_6$ and by setting $V_5$ to $\{u,u',u_1,u_2\}$ and $V_6$ to $\{u',u_1,u_2,x_2\}$ we get another path-decomposition of $H$.
	As before, this yields a desired decomposition by deleting $M$, $u$, and the initial bags.
\end{proof}

We are now ready to prove the second part of Theorem~\ref{thm: pw3g4}.
\begin{proof}[Proof of Theorem~\ref{thm: pw3g4}~\eqref{itm: general pw3g4}]
	Fix $i \in \{1, \dots, 6\}$ and assume that $G'$ is obtained from $G$ by the reduction $R_i$.
	We leave it to the reader to check that $G'$ is a simple cubic graph of girth~4.
	If $i \in \{1,2,3,5\}$, then $G'$ is a minor of $G$ and, hence, $G'$ is of path-width~3.
	
	For $G_6$ we denote the cut-edge by $uv$ and the components of $G-uv$ by $K_u$ and $K_v$.
	By applying \Cref{cubic-pw3-bride-path-dec} we get path-decompositions of width~3 for $K_u-u$ and $K_v-v$ that have $\NX{u}\setminus\{v\} \eqqcolon \{u_1,u_2\}$ and $\NX{v}\setminus\{u\} \eqqcolon \{v_1,v_2\}$ in their first bags.
	We obtain a decomposition of width~3 of $G'$ by connecting the first vertices of both paths to a new vertex~$x$ with associated bag $V_x=\{u_1,u_2,v_1,v_2\}$.
	
	We use \Cref{cubic-pw3-2-edge-sep-path-dec} to obtain that the graph which results from $R_4$ has path-width~3.
	Let $\{uv,u'v'\}$ be the two edge separator in $G_4$ as illustrated in \Cref{fig: reductions 46 for pw3g4}.
	Since $uu',vv'\notin E(G)$ the lemma is applicable and because $K_{v,v'}$ is symmetric we obtain a path-decomposition of width~3 of $K_{u,u'}-u$ whose first bag contains $\{u',u_1,u_2\}$, without loss of generality.
	We can construct a path-decomposition of $K_{v,v'}-v$ whose last bag contains $\{v',v_1,v_2\}$.
	We combine the two by adding the bags $\{v_1,v_2,v',u_1\}$, $\{v_2,v',u_1,u_2\}$, and $\{v',u_1,u_2,u'\}$.
	This yields a decomposition of $G$ of width~3.
\end{proof}

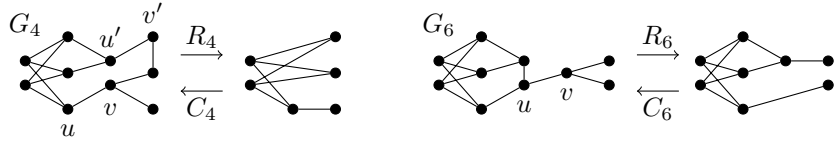
\begin{figure}[h!]
	\centering
	\begin{tikzpicture}[scale = .8]
		\def\colone{0}
		\def\coltwo{.7}
		\def\colthree{1.4}
		\def\colfour{2.1}
		\def\colfive{2.8}
		\def\arrows{\node (i1) at (0,.3) {};
			\node (i2) at (1,.3) {};
			\draw[->] (i1)--(i2);
			\node (i3) at (0,-.3) {};
			\node (i4) at (1,-.3) {};
			\draw[<-] (i3)--(i4);}
		\def\k23{\node[svertex] (u) at (\colone,-.2) {};
			\node[svertex] (v) at (\colone,.2) {};
			\node[svertex] (x1) at (\coltwo,-.6) {};
			\node[svertex] (x2) at (\coltwo,0) {};
			\node[svertex] (x3) at (\coltwo ,.6) {};
			\draw (u)--(x1)--(v)--(x2)--(u)--(x3)--(v);
		}
		\def\3thirdcolmn{
			\node[svertex] (y1) at (\colthree,-.6) {};
			\node[svertex] (y2) at (\colthree,0) {};
			\node[svertex] (y3) at (\colthree,.6) {};
		}
		\def\2thirdcolmn{
			\node[svertex] (y1) at (\colthree,-.2) {};
			\node[svertex] (y2) at (\colthree,.2) {};
		}
		\def\threefourthcolmn{
			\node[svertex] (w1) at (\colfour,-.6) {};
			\node[svertex] (w2) at (\colfour,0) {};
			\node[svertex] (w3) at (\colfour,.6) {};
		}
		\def\twofourthcolmn{
			\node[svertex] (w1) at (\colfour,-.2) {};
			\node[svertex] (w2) at (\colfour,.2) {};
		}
		\def\onefourthcolmn{
			\node[svertex] (w1) at (\colfour,0) {};
		}
		\def\posv{(0,1)}
		
		\begin{scope}[shift={(0,0)}]
			\begin{scope}[shift ={(0,0)}]
				\node (lg3) at (0,.8) {$G_4$};
				\node (lu) at (\coltwo,-.95) {$u$};
				\node (lv) at (\colthree,-.6) {$v$};
				\node (lu') at (\colthree,.6) {$u'$};
				\node (lv') at (\colfour,1) {$v'$};
				\k23
				\2thirdcolmn
				\threefourthcolmn
				\draw (x3)--(y2)--(x2) (x1)--(y1)
				(w2)--(y1)--(w1) (y2)--(w3) (w2)--(w3)
				;
			\end{scope}
			
			\begin{scope}[shift={(2.4,0)}]
				\arrows
				\node (lr1) at (.5,.6) {$R_4$};
				\node (lc1) at (.5,-.6) {$C_4$};
			\end{scope}
			
			\begin{scope}[shift={(3.7,0)}]
				\node[svertex] (u) at (\colone,-.2) {};
				\node[svertex] (v) at (\colone,.2) {};
				\node[svertex] (x1) at (\coltwo,-.6) {};
				\3thirdcolmn
				\draw (u)--(x1)--(v)--(y3)--(u)--(y2)--(v) (y1)--(x1);
			\end{scope}
		\end{scope}
		
		\begin{scope}[shift={(6.8,0)}]
			\begin{scope}[shift ={(0,0)}]
				\k23
				\node (lg3) at (0,.8) {$G_6$};
				\2thirdcolmn
				\onefourthcolmn
				\node[svertex] (z1) at (\colfive,-.2) {};
				\node[svertex] (z2) at (\colfive,.2) {};
				\node (lu) at (\colthree,-.55) {$u$};
				\node (lv) at (\colfour,-.35) {$v$};
				\draw (x1)--(y1)--(y2)--(x2) (y2)--(x3)
				(y1)--(w1)
				(z1)--(w1)--(z2)
				;
			\end{scope}
			
			\begin{scope}[shift={(3.1,0)}]
				\arrows
				\node (lr1) at (.5,.6) {$R_6$};
				\node (lc1) at (.5,-.6) {$C_6$};
			\end{scope}
			
			\begin{scope}[shift={(4.3,0)}]
				\k23
				\node[svertex] (y2) at (\colthree, .2){};
				\node[svertex] (z1) at (\colfour,-.2) {};
				\node[svertex] (z2) at (\colfour,.2) {};
				\draw (x3)--(y2)--(x2)
				(y2)--(z2)  (x1)--(z1)
				;
			\end{scope}
		\end{scope}
	\end{tikzpicture}
	\caption{Reductions $R_4$ and $R_6$ for graphs of path-width~3 and girth~4.}
	\label{fig: reductions 46 for pw3g4}
\end{figure}

\section{Conclusion and future research}
We successfully automated the approach of working one's way through a smooth path-decomposition in order to show that a set of graphs is unavoidable.
The algorithm has proved its use in practice since, for example, it verified that $\{C_3, C_4, \dots, C_7\}$ is unavoidable for cubic graphs of path-width at most~7.
These computational results led us to new insights on $\xi(k)$.
Another application of the algorithm is the proof Theorem~\ref{thm: pw3g4}.

The most obvious question is whether our framework can be generalised to provide an algorithm which verifies unavoidable sets for graph classes of bounded tree-width.
A starting point would be to root a smooth tree-decomposition and order the bags such that those bags of highest distance to the root are considered first.
Observe that also the highest-degree-first concept transfers to tree-width.
The expected problem with this approach is that the search space becomes too high.

Furthermore, it would be desirable to extend the list in Theorem~\ref{thm: unique-smallest-graphs} of graphs of path-width~$k$ and girth~$\xi(k)$  for larger values of $k$.

\clearpage
\bibliographystyle{abbrv}
\bibliography{lit}

\clearpage

\small
\vskip2mm plus 1fill
\noindent
Version \today{}
\bigbreak

\noindent
Oliver Bachtler
{\tt <bachtler@mathematik.uni-kl.de>}\\
Optimization Research Group\\
Technische Universit\"at Kaiserslautern, Kaiserslautern\\
Germany\\

\noindent
Irene Heinrich
{\tt <irene.heinrich@cs.uni-kl.de>}\\
Algorithms and Complexity Group\\
Technische Universit\"at Kaiserslautern, Kaiserslautern\\
Germany\\

\end{document}